\documentclass[11pt, reqno]{amsart}
\usepackage{amsmath,amsfonts,amsthm,amsopn,amssymb}
\usepackage{amsmath}
\usepackage{verbatim,wasysym,mathrsfs}
\usepackage[left=2.6cm,right=2.6cm,top=3.1cm,bottom=3.1cm]{geometry}
\usepackage{cite,marginnote}
\usepackage{color,enumitem,graphicx}
\usepackage[colorlinks=true,urlcolor=blue, citecolor=red,linkcolor=blue,
linktocpage,pdfpagelabels, bookmarksnumbered,bookmarksopen]{hyperref}
\usepackage[english]{babel}
\usepackage[T1]{fontenc}
\usepackage{lmodern}
\usepackage[hyperpageref]{backref}

\numberwithin{equation}{section}

\makeindex
\newtheorem{theorem}{Theorem}[section]
\newtheorem{proposition}[theorem]{Proposition}
\newtheorem{lemma}[theorem]{Lemma}
\newtheorem{remark}[theorem]{Remark}
\newtheorem{example}[theorem]{Example}
\newtheorem{corollary}[theorem]{Corollary}
\newtheorem{definition}[theorem]{Definition}

\newcommand{\RN}{\mathbb R^N}

\newcommand{\iy}{\infty}

\newcommand{\s}{\section}

\newcommand{\DD}{\Delta}

\newcommand{\na}{\nabla}

\newcommand{\la}{\lambda}

\newcommand{\pa}{\partial}

\newcommand{\R}{\mathbb R}

\newcommand{\re}[1]{(\ref{#1})}

\newcommand{\C}{\mathbb C}

\newcommand{\rg}{\rightarrow}

\newcommand{\lan}{\langle}
\newcommand{\ran}{\rangle}
\newcommand{\e}{\varepsilon}
\newcommand{\vp}{\varphi}

\newcommand{\lab}{\label}
\newcommand{\bt}{\begin{theorem}}
\newcommand{\et}{\end{theorem}}
\newcommand{\bl}{\begin{lemma}}
\newcommand{\el}{\end{lemma}}
\newcommand{\bd}{\begin{definition}}
\newcommand{\ed}{\end{definition}}
\newcommand{\bc}{\begin{corollary}}
\newcommand{\ec}{\end{corollary}}
\newcommand{\bp}{\begin{proof}}
\newcommand{\ep}{\end{proof}}
\newcommand{\bx}{\begin{example}}
\newcommand{\ex}{\end{example}}
\newcommand{\bi}{\begin{exercise}}
\newcommand{\ei}{\end{exercise}}
\newcommand{\bo}{\begin{proposition}}
\newcommand{\eo}{\end{proposition}}
\newcommand{\br}{\begin{remark}}
\newcommand{\er}{\end{remark}}
\newcommand{\be}{\begin{equation}}
\newcommand{\ee}{\end{equation}}
\newcommand{\ba}{\begin{align}}
\newcommand{\ea}{\end{align}}
\newcommand{\bn}{\begin{enumerate}}
\newcommand{\en}{\end{enumerate}}
\newcommand{\bg}{\begin{align*}}
\newcommand{\bcs}{\begin{cases}}
\newcommand{\ecs}{\end{cases}}

\def\C{\mathbb C}
\def\R{\mathbb R}

\def\N{\mathbb N}

\def\Proof{\noindent{\bf Proof}\quad}
\def\qed{\hfill$\square$\smallskip}
\makeatletter
\def\@makefnmark{}
\makeatother

\newcommand{\pl}{\partial}

\newcommand{\bean}{\begin{eqnarray*}}
\newcommand{\eean}{\end{eqnarray*}}

\renewcommand{\triangle}{\Delta}
\renewcommand{\epsilon}{\varepsilon}
\title[The logarithmic Choquard equation]
{Orbital stability of positive standing waves to the logarithmic Choquard equations}

\author[Z. S. Liu]{Zhisu Liu}
\author[S. Sun]{Shu-Ming Sun}
\author[W. N. Zhang]{Weinian Zhang}

\address[Zhisu Liu]{\newline\indent
School of Mathematics and Physics, China University of Geosciences,
\newline\indent
Wuhan, Hubei, 430074, P.R. China\\
\newline\indent
Institute for Advanced Marine Research, China University of Geosciences,
\newline\indent
Guangzhou 511462, P. R. China}
\email{\href{liuzhisu@cug.edu.cn}{liuzhisu@cug.edu.cn}}
\address[Shu-Ming Sun]{\newline\indent
Department of Mathematics, Virginia Tech, Blacksburg, VA 24061,
\newline\indent
U. S. A.}
\email{\href{sun@math.vt.edu}{sun@math.vt.edu}}
\address[Weinian Zhang]{\newline\indent School of Mathematics,
 Sichuan University,
\newline\indent
Chengdu, Sichuan, 610065, P.R. China}
\email{\href{matzwn@126.com}{matzwn@126.com}}

\thanks{Z. Liu is supported by the NSFC (No.12571172) and Guangdong Basic and Applied Basic Research
Foundation (No.2024A1515012704).
W. Zhang is supported by the National Key R\&D Program of China (2022YFA1005900)
and the NSFC (Nos. 12571180 and 12226325).
}

\subjclass[2010]{35B35; 35B40; 35J10}
\keywords{Logarithmic Choquard equation, Standing waves, Uniqueness, Nondegeneracy, Orbital stability}

\begin{document}
\allowdisplaybreaks

\begin{abstract}
The paper concerns the orbital stability of positive standing waves for the logarithmic Choquard equation with a
small local nonlinear perturbation. First, the uniqueness and nondegeneracy of the positive ground-state solution are established
via bifurcation and compactness arguments. Then, by exploiting the spectral properties of the linearized operator
around the positive ground-state solution in high-energy spaces, the orbital stability of the corresponding positive
standing wave is proved.
The main difficulty in establishing orbital stability stems from the fact that the logarithmic convolution kernel is both
sign-changing and strongly singular, while the norm used to prove global existence is
not translation invariant. Consequently, existing approaches to orbital stability for positive ground-state solutions of three- and higher-dimensional Choquard equations with Riesz-type kernels cannot be applied to the two-dimensional logarithmic Choquard equation considered here.
\end{abstract}
\maketitle

	\maketitle

\parskip 0.2cm

%%

%%

%%%%%%%%%%%%%%%%%%%%%%%%%%%%%%%%%%%%%%%%%%%%%%%%%%%%%%%%%%%%%%%%%%%%%%%

\section{Introduction}
\label{sec1}

%%
%\footnote{[ZWN241022]
%It is better to state those equations, formulas and definition of $\ast$ here for beginning.
%Never state those math formulas and notions too much in the Abstract.
%In the Abstract we need to explain in human language.
%}
%%

%%
%\footnote{[ZWN241022] Where did you define te concepts
%``orbital stability'' ``positive standing wave''?
%}
%%

%%%=====================================

This paper discusses the long-time behavior of solutions for an initial value problem of logarithmic Choquard equation
\begin{equation}\lab{eqn:q1}
\left\{
\begin{aligned}
    &i\pl_t\psi=-\DD \psi-\bigg(\ln\frac{1}{|x|}\ast|\psi|^2\bigg)\psi-\nu|\psi|^{p-1}\psi, & \quad t>0,\,\,\,x\in\R^2, \\
    &\psi(x,0)=\psi_0(x), & \quad x\in\R^2,
  \end{aligned}
\right.
\end{equation}
where $\ast$ denotes the convolution and $\nu$ is a fixed real constant. This problem arises from  two-dimensional
Schr\"{o}dinger-Newton system
\begin{equation}\lab{eqn:q2}
 \left\{
\begin{aligned}
    &i\pl_t\psi=-\DD \psi-\phi \psi-\nu|\psi|^{p-1}\psi, & \quad t>0,\,\,\,x\in\R^2, \\
    &-\DD\phi=2\pi|\psi|^2,\\
    &\psi(x,0)=\psi_0(x), & \quad x\in\R^2,
  \end{aligned}
\right.
\end{equation}
 where $\psi:[0,\infty)\times\R^2\rightarrow\C$, and $\phi: [0,\infty)\times\R^2\rightarrow\R$ is the Newtonian
potential given by
$$
\phi=\ln\frac{1}{|x|}\ast|\psi|^2.
$$
%%%
%[[[{\color{red}[ZWN250611] s?
%How many "dimensions"?ÕâÀïÒÑŸ­ÐÞžÄ¡£
%}
%]]]
%%%

The Schr\"odinger-Newton system is a nonlinear Schr\"odinger equation with a Newtonian gravitational potential,
where the potential arises from the interaction of a particle with its own gravitational field,
leading to a fundamental modification of quantum mechanics.
The system can also be viewed in connection with self-gravitating boson stars,
as well as  a model for explaining quantum wave function collapse. Moreover, a two-dimensional Schr\"odinger-Newton system \re{eqn:q2}, i.e., the logarithmic Choquard equation,  may model quantum bosons with logarithmic interaction.
When the spatial dimension is three, the system corresponding to \re{eqn:q2} was used by Choquard to describe an
electron trapped in its own hole within a certain approximation of the Hartree-Fock theory for a one-component plasma, as discussed by Lieb in \cite{lieb}. Moreover, such systems frequently appear in models of quantum mechanics and semiconductor theory (see \cite{Lions87,Lieb77}), and are also considered, for example, as models for the interaction of a charged particle with an electrostatic field (see \cite{Benci98}).

%%%%%%%%%%%%%%%%%%%%%%%%%%%%%%%%%%%%%%%%%%%%%%%%

In this paper, we focus only on  mathematical properties of solutions for (\ref{eqn:q1}).  Similar to the study of nonlinear Schr\"odinger equations, the first fundamental question is whether (\ref{eqn:q1}) admits \emph{standing-wave} solutions,
namely,
solutions of the form $\psi(t,x)=e^{i\omega t}u(x)$, where
 $\omega\in\R^+$ and $u:\R\rightarrow \R^+$ is a positive function.
Substituting this ansatz into (\ref{eqn:q1}) yields the equation
\begin{equation}\label{eqn:2sn}
-\DD u+\omega u =\bigg(\ln\frac{1}{|x|}\ast|u|^2\bigg)u+\nu|u|^{p-1}u\quad \mbox{in}\,\, \R^2.
\end{equation}
The existence of localized solutions (also called bound states) of  (\ref{eqn:2sn})
was investigated numerically
in \cite{Harrison03, Choquard08}.  Formally, \re{eqn:2sn} has a variational
structure with the Euler-Lagrange equation related to the energy functional
\begin{align}
S_{\omega,\nu}(u)=\frac{1}{2} \int_{\mathbb{R}^{2}}\left(|\nabla u|^{2}+\omega |u|^{2}\right) {\rm d} x
-\frac{1}{4} \int_{\mathbb{R}^{4}}
\ln\left(\frac{1}{|x-y|}\right) |u(x)|^2 |u(y)|^2 {\rm d} x {\rm d} y-\frac{\nu}{p+1}\int_{\R^2}|u|^{p+1}{\rm d} x, \label{1.3.1}
\end{align}
where the nonlocal term (second term in \eqref{1.3.1}) is not defined on the classical Sobolev space $H^1(\R^2)$.
Stubbe \cite{Stubbe08} introduced a variational framework to (\ref{eqn:2sn}) by using
a weighted Sobolev space
\begin{align}
X:= \left\{u\in H^1(\R^2):\int_{\R^2}\ln(1+|x|)|u(x)|^{2}{\rm d}x<+\infty\right\}\label{1.3.2}
\end{align}
 with a norm
$$
\|u\|_{X}:=\left[\int_{\mathbb{R}^{2}}\left(|\nabla u|^{2}+\omega|u|^{2}\right)
{\rm d}x+\int_{\mathbb{R}^{2}} \ln (1+|x|)|u(x)|^{2} {\rm d}x\right]^{\frac12},
$$
which guarantees that the associated energy functional is well-defined
and continuously differentiable in $X$.

To study  (\ref{eqn:2sn}) in $X$, we note that the norm $\|\cdot\|_X$ is not translation invariant, whereas the functional $S_{\omega,\nu}$ is. Moreover, the quadratic part of the functional $S_{\omega,\nu}$ after linearization is never coercive in $X$ for any $\omega>0$ due to the presence of the nonlocal term.
%This makes problems tough in verifying the compactness via the concentration-compactness principle.
By using a symmetric bilinear form
$$
\int_{\mathbb{R}^{2}} \int_{\mathbb{R}^{2}} \ln\left(1+|x-y|\right) u(x) v(y) {\rm d} x {\rm d} y,
$$
Cingolani and Weth \cite{Cingolani16} overcame these difficulties
and obtained a sequence of high-energy solutions to (\ref{eqn:2sn})
with a general local nonlinearity, as well as the uniqueness of positive solution to \re{eqn:2sn} with $\nu=0$ using moving-plane method.
Based on the uniqueness results in \cite{Choquard08,Cingolani16},
the non-degeneracy property
of the positive solution to \re{eqn:2sn} with $\nu=0$ was proved  in \cite{Bonheure17} using the multipole expansion of the logarithm kernel. Moreover, many attempts have been made
to address the difficulties arising from the sign-changing nature of certain interactions, such as Coulomb or logarithmic interactions, in more general nonlinear equations or systems
(see  \cite{Luca18,Albuquerque21,Bucur22,Cassani21,Cassani25,Chen20,Cingolani19,Azzollini21,
Liu22,Cassani23,Pucci24,Guo23,Dolbeault21,Shen24,Cabrera25}
 and the references therein).

After establishing the existence of localized (or solitary-wave) solutions of \eqref{eqn:2sn}, the next important question is the stability of these solutions with respect to the time-dependent equation \eqref{eqn:q1}. Due to certain invariances of \eqref{eqn:q1}, the stability is referred
%%
%\footnote{[ZWN260604] referred}
%%
to as orbital stability, i.e., if the initial datum $u_0$ of the associated Cauchy problem (\ref{eqn:q1})
is close to a solitary-wave solution of (\ref{eqn:2sn}), then the corresponding evolution $u(x,t)$ of (\ref{eqn:q1}) remains close
to the orbit generated by some invariants (such as translation, rotation, etc.) associated with the solitary-wave solution. In particular, to study
the dynamical behavior
%%
%\footnote{[ZWN260604]
%``the dynamical behavior'' should be ``dynamical behaviors''?}
%%
of
solutions to the nonlinear Schr\"{o}dinger equation,  namely \eqref{eqn:q1} without the nonlocal term,
it is shown in \cite{Berestycki81,Lions,Wein0} that
the unique positive radially symmetric standing-wave solution $u_\omega e^{i\omega t}$ of
the initial value problem
\begin{equation}\label{eqn:2sn-}
\left\{
\begin{aligned}
    &i\psi_t+\triangle {\psi}+|\psi|^{p-1}\psi=0,&\quad x\in\R^N,\\
    &\psi(x,0)=\psi_0(x), & \quad x\in\R^N,
  \end{aligned}
\right.
\end{equation}
given in \cite{Kwong89},
where $1<p<2^*-1$ and $2^*=\infty$ for $N=1,2$ or $2^*=\frac{2N}{N-2}$ for $N\geq3$,
is either stable if $p<1+4/N$
or unstable if $p\geq 1+4/N$.
Stability proof is based on variational and compactness arguments, while
instability proof is based on variational characterizations related to the virial identity.
Another approach was developed in \cite{Wein1,Wein2,Grillakis87} to show that the
standing-wave solution $u_\omega e^{i\omega t}$
is either stable if $\partial_{\omega}\|u_\omega\|_2^2>0$ or unstable if $\partial_{\omega}\|u_\omega\|_2^2<0$. If $u_1\in H^1(\R^N)$ is
the unique positive solution of the equation $-\triangle {u}+u=|u|^{p-1}u$,
then the function
%%
%\footnote{[ZWN260604] `the function' is added}
%%
\begin{equation}\label{eqn:binahuan}
u_\omega(x)=\omega^{\frac{1}{p-1}}u_1 \left (\sqrt{\omega}x\right ),
\end{equation}
for $\omega > 0$ is also a solution and represents a kind of scaling symmetric invariant.
After calculating $\partial_{\omega}\|u_\omega\|_2^2$, it is
obtained that
\begin{equation}\label{eqn:dengjia}
\aligned
&p<1+4/N \quad\text{if\,and\,only\,if}\quad \partial_{\omega}\|u_\omega\|_2^2>0, \,\text{and}\\
&p>1+4/N \quad\text{if\,and\,only\,if}\quad \partial_{\omega}\|u_\omega\|_2^2<0.
\endaligned
\end{equation}

For the three-dimensional version of Choquard equation (\ref{eqn:q1}) with nonlocal nonlinear term only, Cazenave and Lions \cite{Lions} considered the following problem
\begin{equation}\lab{eqn:q1+3}
\left\{
\begin{aligned}
    &i\pl_t\psi=-\DD \psi-\bigg(\frac{1}{|x|}\ast|\psi|^2\bigg)\psi, & \quad t>0,\,\,\,x\in\R^3, \\
    &\psi(x,0)=\psi_0(x), & \quad x\in\R^3,
  \end{aligned}
\right.
\end{equation}
and applied constrained variational and compactness arguments to prove the orbital
stability of standing-wave solutions $u_\omega e^{i\omega t}$ for any $\omega>0$. In particular, in their proof,
the uniqueness of positive standing-wave solutions $u_\omega e^{i\omega t}$ and
the scaling symmetry of (\ref{eqn:q1+3})
play essential roles to obtain the classification of positive
solutions for some minimization problem with prescribed $L^2$-norm.
With the help of such classification,
the orbital stability of standing-wave solutions for any fixed frequency $\omega > 0$ can be proved.
In addition,
%%
%\footnote{[ZWN260604] typo is corrected}
%%
d'Avenia and Squassina \cite{Marco1} studied the solution dynamics
of \re{eqn:q1+3} with external potential and established suitable
modulational stability estimates in the spirit of those
obtained by Weinstein in \cite{Wein1,Wein2}.
We remark that there are also some stability results (see \cite{Bellazzini11,Bellazzini13,Kikuchi07,Colin19} and references therein)
regarding the standing-wave solutions of \re{eqn:q1+3} with additional local nonlinearity.

%%%%%%%%%%%%%%%%%%%%%%%%%%%%%%%%%%%%%%%%%%%%%%%%%%%%%%%%%%%%%%%%

In this paper, we mainly investigate the orbital stability
%%
%\footnote{[ZWN260604]{\color{red} This notion is not matched with the title of the paper and Prof. Sun's idea in the message.
%I suggest}:
%In this paper, we study the orbital stability of standing-wave solutions for  \re{eqn:q1} with both local and nonlocal terms and $\nu\geq0$ small.
%In particular, to the best of our knowledge, there are no theoretical results on the orbital stability of standing-wave solutions for  \re{eqn:q1} even in the case $\nu = 0$.
%To study the orbital stability, we first establish results on the uniqueness and nondegeneracy
%of ground-state solutions to (\ref{eqn:2sn}) when the positive parameter $\nu$ is small.
%Indeed, contrary to ...
%}
%%
of standing-wave solutions for  \re{eqn:q1}, which contains both local and nonlocal terms for small $\nu\geq0$. In particular, to the best of our knowledge, no theoretical results on the orbital stability of standing-wave solutions to \re{eqn:q1} are available, even in the case $\nu = 0$.
To study the orbital stability, we first need to establish results on the uniqueness and nondegeneracy
of ground-state solutions to (\ref{eqn:2sn}) for small positive $\nu$.
In contrast to the existence theory, very little is known about these issues for logarithmic Choquard equations. Despite recent progress in several special
cases \cite{Choquard08,Bonheure17,Cingolani16}, many fundamental questions are still open. Our result is stated as follows.
%%
%\footnote{[ZWN260604] I simplified the statement of Theorem 1.1. Please check}
%%
\bt\lab{Th1}
For $p\in(1,+\infty)$, there exists $\nu^*>0$ such that for any $\nu\in(0,\nu^*)$, {equation (\ref{eqn:2sn})} has
 a unique positive radially symmetric ground-state solution $Q_{\omega,\nu}\in X$, and
 the linearized operator of  (\ref{eqn:2sn}) at $Q_{\omega,\nu}$,  defined by
$$L_{+,\nu}\vp:=-\DD\vp+\omega\vp- \left (\ln\frac{1}{|x|}\ast Q_{\omega,\nu}^2\right )\vp
-2\left (\ln\frac{1}{|x|}\ast Q_{\omega,\nu}\vp\right )Q_{\omega,\nu}-p\nu Q_{\omega,\nu}^{p-1}\vp,\,\,\vp\in \tilde{X}\, ,
$$
is nondegenerate in the sense that
$$
\mbox{ker} \, L_{+,\nu}=span\left \{\partial_{x_1}Q_{\omega,\nu},\partial_{x_2}Q_{\omega,\nu}\right \},
$$
where
$\mbox{ker}\,  L_{+,\nu}$ represents the kernel of $L_{+,\nu}$
%, the linearized operator of
% (\ref{eqn:2sn}) at $Q_{\omega,\nu}$ defined by
%$$L_{+,\nu}\vp:=-\DD\vp+\omega\vp- \left (\ln\frac{1}{|x|}\ast Q_{\omega,\nu}^2\right )\vp
%-2\left (\ln\frac{1}{|x|}\ast Q_{\omega,\nu}\vp\right )Q_{\omega,\nu}-p\nu Q_{\omega,\nu}^{p-1}\vp,\,\,\vp\in \tilde{X}\, ,
%$$
and
$\tilde{X}$ is the space defined by (see \cite{Bonheure17})
\begin{equation}\label{eqn:X+}
\aligned
\tilde{X}:=\bigg\{\vp\in X:\,\, &\text{there\, exists}\,f\in L^2(\R^2)\,\text{such\, that}\\
& \int_{\R^2}\vp L_{+,\nu}\psi{\rm d}x=\int_{\R^2}f\psi{\rm d}x,~ \forall\, \psi\in C_c^\iy(\R^2)\bigg\}.
 \endaligned
\end{equation}
In addition, the operator $L_{+,\nu}$ has exactly one negative eigenvalue and hence has
Morse index one.
\et
\begin{remark}
Note that the Morse
index of $L_{+,\nu}$ is the number of negative eigenvalues (counted with multiplicity) of the
self-adjoint operator $L_{+,\nu}$ acting on $X$.
\end{remark}

The ground-state solution $Q_{\omega,\nu}$ of (\ref{eqn:2sn}) was obtained in \cite{Du17},
and the critical value of the functional $S_{\omega,\nu}$ in \eqref{1.3.1} at $Q_{\omega,\nu}$
can be estimated uniformly in $\nu$ via a mountain-pass characterization.
It can be shown that $Q_{\omega,\nu}$ is in $X$ uniformly for small $\nu>0$.
A variant of the moving plane method (see \cite{Cingolani16}) implies that every positive solution
in $X$ is radially symmetric up to translation and strictly decreasing with respect to the distance from the center of symmetry. Together with Radial Lemma in \cite{Berestycki81} and a comparison principle, this yields that
 $Q_{\omega,\nu}$ decays exponentially to zero at infinity. Hence, $Q_{\omega,\nu}$
converges strongly in $X$ to $Q_\omega$ as $\nu\rightarrow0^+$, and the implicit function
theorem can be applied to prove the uniqueness
of the ground state $Q_{\omega,\nu}$ for small $\nu>0$, using the fact that $Q_\omega$ is the unique positive solution of equation (\ref{eqn:2sn}) with $\nu=0$.
The proof of nondegeneracy is again based on the strong convergence of $Q_{\omega,\nu}$ in $X$.
By analyzing eigenvalue problem for the linear operator $L_{+,\nu}$,
it can be shown that the Morse index of $L_{+,\nu}$ equals $1$ and $\mbox{ker}\,  L_{+,\nu}=span\{\zeta\cdot \nabla Q_{\omega,\nu}:
 \,\zeta\in\R^2\}$, using a compactness perturbation argument from the case $\nu=0$.
 A similar analysis has been carried out for
other problems (see \cite{Lenzmann09,Fall14,Xiang16,Bonheure18} and the references therein).
\begin{remark}
Theorem \ref{Th1} can be seen as a generalization
of the uniqueness and nondegeneracy results for (\ref{eqn:2sn}) with $\nu=0$ obtained
in \cite{Choquard08,Bonheure17,Cingolani16} to the case of small $\nu>0$.
\end{remark}

%%%%%%%%%%%%%%%%%%%%%%%%%%%%%%%%%%%%%%%%%%%%%%%%%%%%%%%%%%%%%%%%%%%%

Theorem \ref{Th1}  plays an  important role in studying the orbital stability of ground-state solutions
for the time-dependent Choquard equation (\ref{eqn:q1}), serving as a spectral assumption for the linearized operator.
In general, to study stability,  the global well-posedness of the initial value problem is
a necessary condition for any qualitative analysis of long-time dynamics.
The global well-posedness result of  (\ref{eqn:q1}) was established in \cite{Masaki11},
where the nonlocal nonlinearity is decomposed into $\|\psi\|_2^2(\ln\langle x\rangle)$ and $\int_{\R^2}
\left(\ln\frac{|x-y|}{\langle x\rangle}\right)|\psi(y)|^2{\rm d}y$,
that is,
$$
\bigg(\ln\frac{1}{|x|}\ast|\psi|^2\bigg)\psi=-
\|\psi\|_2^2(\ln\langle x\rangle)\psi-\psi\int_{\R^2}
\left(\ln\frac{|x-y|}{\langle x\rangle}\right)|\psi(y)|^2{\rm d}y,
$$
and $\langle x\rangle=(1+|x|^2)^{1/2}$.
Then, the Cauchy problem (\ref{eqn:q1}) can be rewritten as
\begin{equation}\lab{eqn:q2+}
\left\{
\begin{aligned}
    &i\pl_t\psi+\DD \psi-m\ln\langle x\rangle\psi
    =-\int_{\R^2}\bigg(\ln\frac{|x-y|}{\langle x\rangle}\bigg)|\psi|^2{\rm d}y\psi+\nu|\psi|^{p-1}\psi, &\quad t>0,\,\,\,x\in\R^2, \\
    &\psi(x,0)=\psi_0(x), & \quad x\in\R^2,
  \end{aligned}
\right.
\end{equation}
where $m:=\|\psi_0\|_2^2$.
Since (\ref{eqn:q2+}) has a linear part
$
i\pl_t\psi+\DD \psi-m\ln\langle x\rangle\psi,
$
a natural choice of the function space \cite{Masaki11} is %the following
%%
%\footnote{[ZWN260604] I cancel
%`` space,''
%}
%%
$$
\aligned
&\mathcal{H}:=\{u\in H^1(\R^2,\mathbb{C}):\,\int_{\R^2}\ln\langle x\rangle |u|^2{\rm d}x<+\infty\},\quad
\|u\|_{\mathcal{H}}:=\|u\|_{H^1(\R^2,\mathbb{C})}+\left \|\sqrt{\ln\langle \cdot\rangle}u \right \|_{L^2(\R^2,\mathbb{C})}.
\endaligned
$$
This means that  the initial condition must have some kind of decay at infinity. It is straightforward to check  that the space $\mathcal{H}$ is equivalent to $X(\R^2,\mathbb{C})$, which is similar to $X = X(\R^2,\mathbb{R})$ defined in \eqref{1.3.2}.
It has been shown in \cite{Masaki11} that  (\ref{eqn:q1})
is globally well-posed in $X(\R^2,\mathbb{C})$ either for $\nu=0$, or for $p\in[2,3)$ and $\nu>0$, while some local well-posedness results can be found in \cite{Masaki10,Sun24}.
Note that the solution of (\ref{eqn:q1}) perserves $\|u(t)\|_2^2$ and the energy
\begin{align}
\mathcal{E}_\nu(u(t)):=\frac{1}{2}\|\nabla u(t)\|_2^2
-\frac{1}{4}\int_{\R^4}\ln\left(\frac{1}{|x-y|}\right)|u(x,t)|^2|u(y,t)|^2{\rm d}x{\rm d}y-\frac{\nu}{p+1}\int_{\R^2}|u|^{p+1}{\rm d} x.\label{1.9.1}
\end{align}

\begin{definition} (see \cite{Cazenave03})
%%
%\footnote{[ZWN260604] Do we need a bracket $()$ for \cite{Cazenave03}?
%}
%%
A standing-wave solution $e^{i\omega t}u_\omega$ of equation (\ref{eqn:q1}) is said to be {\it orbitally stable}  if for any $\epsilon>0$,
there exists $\delta>0$ such that for each $u_0\in X(\R^2,\mathbb{C})$ satisfying that
$\|u_0-u_\omega\|_{X(\R^2,\mathbb{C})}<\delta$,
the solution $u(t)$ of (\ref{eqn:q1}) with $u(0)=u_0$ exists for any $t \geq 0$ and satisfies
$$
\sup\limits_{t\geq0}\inf\limits_{\theta\in\R,y\in\R^2}\|u(t)-e^{i\theta}u_\omega(\cdot-y)\|_{X(\R^2,\mathbb{C})}<\epsilon.
$$
Otherwise, $e^{i\omega t}u_\omega$ is said to be {\it unstable}.
\end{definition}

Comparing with (\ref{eqn:2sn-}) and (\ref{eqn:q1+3}), equation (\ref{eqn:2sn}) does not have
the similar scaling symmetric invariant as (\ref{eqn:binahuan}) even for $\nu=0$.
Specifically, if $Q_\omega$ is the unique positive solution of \re{eqn:2sn} with $\nu=0$, then for any $a,b\in\R$, the function
%%
%\footnote{[ZWN260604] I added ``the function''}
%%
 $Q(x):=\omega^{a}Q_\omega (\omega^{b}x)$ solves
$$
-\omega^{-2b}\DD Q+\omega Q =\omega^{2(b-a)}\left[\ln\frac{1}{\omega^b}\|Q\|_2^2Q+\bigg(\ln\frac{1}{|x|}\ast|Q|^2\bigg)Q\right]\quad \mbox{in}\,\, \R^2,
$$
which can be simplified as
$$
-\DD Q+Q =\frac{1}{\omega}\left[\frac{1}{2}(\ln\omega ) \|Q\|_2^2Q+\bigg(\ln\frac{1}{|x|}\ast|Q|^2\bigg)Q\right]\quad
\mbox{in}\,\, \R^2,
$$
if $a=b=-\frac12$. Note that here the solution $Q$ still depends on $\omega$. The above analysis shows that it is difficult to determine the
sign of $\partial_{\omega}\|u_\omega\|_2^2$ in a way similar to (\ref{eqn:dengjia}),
even when \re{eqn:2sn} with $\nu=0$ is considered. This difficulty indicates that the abstract theorem established in \cite{Grillakis87} cannot be directly applied to obtain the orbital stability of the solutions obtained in Theorem \ref{Th1} for the problem \eqref{eqn:q1}. Moreover,
the constrained variational approach in \cite{Lions} also seems to be inapplicable  to the stability problem for (\ref{eqn:q1}) even in the case $\nu=0$.

Inspired by the ideas in \cite{Wein1,Wein2,Grillakis87},
we are nevertheless able to establish the spectral properties of the linearized operator in the
high-energy space $X$ and prove the orbital stability of standing-wave solutions.
\bt\lab{Th3}
Given  $\omega\in(0,+\infty)$ and $p\in[2,3)$, there exists $\nu_*>0$ such that for any $\nu\in(0,\nu_*)$
the ground-state standing-wave solution $e^{i\omega t}Q_{\omega,\nu}$
of  (\ref{eqn:q1}) obtained in Theorem \ref{Th1} is orbitally stable in $X(\R^2,\mathbb{C})$.
\et

The proof of Theorem \ref{Th3} adapts the general framework developed in \cite{Wein1,Wein2,Grillakis87},
%%
%\footnote{\color{red}[ZWN260604] This notion may cause the editor/reviewer to think our paper does not have any original creation.
%While we say this, do we talk about our innovation as:
%Although The proof of Theorem \ref{Th3} [[[essentially follows the framework
 %in \cite{Wein1,Wein2,Grillakis87}, we encounter a substantial difficulty that ???????.
%We use ????? to overcome the difficulty}
%%
which
reduces the characterization of the orbital stability of the standing-wave solutions to establishing the coercivity of a bilinear form associated with
the second variational derivative of the Hamiltonian on a constrained function space.
By the nondegeneracy of $Q_{\omega,\nu}$ in Theorem \ref{Th1},
certain spectral properties of the linearized operator associated with
the second variational derivative of the Lagrangian
in the weighted Sobolev space $X$ are obtained. These properties show that
the unstable directions are ruled out by constraints arising from two invariants and
the kernel of a skew-symmetric operator.
The orbital stability of the positive solution $Q_{\omega,\nu}$ of (\ref{eqn:2sn}) is then proved using conserved quantities together with a contradiction argument.
This approach requires new ideas to overcome the difficulties arising from the absence of a scaling symmetry similar to (\ref{eqn:binahuan}) and from
the weighted
Sobolev space $X(\R^2,\mathbb{C})$, which is not translation invariant and makes the quadratic part of the functional $S_{\omega, \nu}$ in \eqref{1.3.1} non-coercive.
In particular, to address these issues, it is important to use an a-priori boundedness constraint together with the translation invariance of the norm $\|\cdot\|_{H^1(\R,\mathbb{C})}$ and energy estimates for the Newtonian
potential, and then to estimate the differences of energy functional $\mathcal{E}_\nu$  between arbitrary solutions of \eqref{eqn:q1} and the standing-wave solution $Q_{\omega,\nu}$ (see
 Lemma \ref{lem-xiajie}).
We emphasize that the result presented in this paper appears to be the first stability result for  the problem (\ref{eqn:q1}).

\begin{remark}
When $\nu=0$, Theorem \ref{Th3} can be viewed as a two-dimensional generalization of the orbital stability of ground-state standing-wave solutions for  the three-dimensional Choquard equations (\ref{eqn:q1+3}) established in \cite{Lions}.
\end{remark}

This paper is organized as follows. In Section 2, some preliminary notations and results of
logarithmic Choquard equation in the weighted
Sobolev space $X$ are introduced.
Section 3 is devoted to the uniqueness and nondegeneracy of positive ground-state
solution $Q_{\omega,\nu}$ of (\ref{eqn:2sn}) with small $\nu>0$.
% We start by a minimum problem of
%energy $\mathcal{E}$ constrained at some prescribed $L^2$-norm manifold,
%which is of use in proving non-negativity of the linearized operator $L_+$ defined in \eqref{eqn:L+}.
%Subsequently, we establish spectral
%properties of the linearized operator evaluated at positive solution $Q_\omega$ and at last
%we state the proof of Theorem \ref{Th1}.
In Section 4,  the orbital stability of ground state
$Q_{\omega,\nu}$ for (\ref{eqn:q1}) with small  $\nu\geq 0$ is proved and the proof of Theorem \ref{Th3} is given. An existence and continuity of an inverse operator together with an integral inequality is proved in Appendix.

%%%
%\footnote{[ZWN241111] Can we state the following notations in next section. Never write the section of Introduction too long.
%}
%%

%%%%%%%%%%%%%%%%%%%%%%%%%%%%%%%%%%%%%%%%%%%%%%%%%%%%%%%%%%%%%%%%%%%%%%%%%%%%%%%%

\s{Notations and function spaces}

In this section, some notations and function spaces which will be used frequently in this paper will be presented.

Firstly, for  any $z\in\mathbb{C}$, $\bar{z}, \mathfrak{Re}(z), \mathfrak{Im}(z)$ are denoted as
the complex conjugate, real part, and imaginary part of $z$, respectively.
It is easy to check that $\mathfrak{Re}(\bar{z}w)=\mathfrak{Re}(z\bar{w})$ and $\mathfrak{Im}(\bar{z}w)=-\mathfrak{Im}(z\bar{w})$
for all $z,w\in\mathbb{C}$. Also, $c $ and $C$ are used to denote generic positive constants {throughout the paper.}
$S_{\omega}$ and $\mathcal{E}$ are denoted as $S_{\omega,0}$
and $\mathcal{E}_0$, respectively, where $S_{\omega,\nu}$
and $\mathcal{E}_\nu$ are defined in \eqref{1.3.1} and \eqref{1.9.1}, respectively.

For function spaces, given $1\leq q \leq +\infty$, let $\|\cdot\|_q$ denote the usual
norm of the real (and complex) Lebesgue integrable
space $L^q(\R^2,\R)$ (and $L^q(\R^2,\C)$).
Let $(u,v)_{L^2}$ denote the inner product
of both $L^2(\R^2,\mathbb{C})$ and $L^2(\R^2)$.
Moreover, $H^1(\R^2,\R)$ and $H^1(\R,\mathbb{C})$ are  the real and complex Sobolev spaces, respectively, endowed with the inner product
$$
(u,v)_{H^1}:=\int_{\R^2}\left ( \nabla u\cdot \nabla \bar{v}+\omega u\bar{v}\right ) {\rm d}x,\,\,\mbox{for}\,\,\, u,v\in H^1(\R,\mathbb{C})
$$
with the norm $\|u\|^2_{H^1(\R,\mathbb{C})}:=\int_{\R^2}\left ( |\nabla u|^2+\omega |u|^2\right ) {\rm d}x$.
For the sake of simplicity,  $\|u\|^2_{H^1}$ is used for $\|u\|^2_{H^1(\R,\mathbb{C})}$ if $u$ is real.
%%%%%%%

In addition, denote $X=X(\R^2,\R)$ and $X(\R^2,\mathbb{C})$ as the real and complex Hilbert spaces, endowed with the norm
$$
\|u\|_{X(\R^2,\mathbb{C})}:=\left(\|u\|_{H^1(\R,\mathbb{C})}^2+\|u\|_w^2\right)^{\frac{1}{2}}
$$
for $u \in X(\R^2,\mathbb{C})$, where
$$
\aligned
%&\|u\|^2_{H^1(\R,\mathbb{C})}:=\int_{\R^2}|\nabla u|^2+\omega|u|^2{\rm d}x,\\
&\|u\|_w:=\left(\int_{\R^2}\ln(1+|x|)|u|^2{\rm d}x\right)^{1/2}\, .
\endaligned
$$
If $u\in X(\R^2,\mathbb{R})$, denote  $\|u\|_{X}$ for $\|u\|_{X(\R^2,\mathbb{R})}= \|u\|_{X(\R^2,\mathbb{C})}$.
Moreover, $X^\ast$ is the dual space of $X$ with the norm
$$
\|\nu\|_{X^\ast}:=\sup\left\{\left|\int_{\R^2}\vp(x)\nu\right|: \vp\in X,\,\,\|\vp\|_{X}\le1 \right\}\qquad \mbox{for}\quad \nu\in X^\ast.
$$

Next, we state some basic properties of the Newton kernel related to problem \re{eqn:2sn}.
From the notations introduced in \cite{Cingolani16},  define symmetric bilinear forms
$$
\aligned
&(u,v)\mapsto B_1(u,v)=\int_{\R^4}\ln(1+|x-y|)u(x)v(y){\rm d}x{\rm d}y,\\
&(u,v)\mapsto B_2(u,v)=\int_{\R^4}\ln\left(1+\frac{1}{|x-y|}\right)u(x)v(y){\rm d}x{\rm d}y,\\
&(u,v)\mapsto B_0(u,v)=B_1(u,v)-B_2(u,v)=\int_{\R^4}\ln(|x-y|)u(x)v(y){\rm d}x{\rm d}y,
\endaligned
$$
where $u,v:\R^2\rightarrow\R$ are measurable functions in the sense of Lebesgue measure space and $B_1, B_2$ have positive kernels.
For  $u\in X$,  the associated functionals are
$$
\aligned
&V_1(u)=B_1\left (u^2,u^2\right )=\int_{\R^4}\ln(1+|x-y|)u^2(x)u^2(y){\rm d}x{\rm d}y,\\
&V_2(u)= B_2\left (u^2,u^2\right )=\int_{\R^4}\ln\left(1+\frac{1}{|x-y|}\right)u^2(x)u^2(y){\rm d}x{\rm d}y,\\
&V_0(u)=B_0\left (u^2,u^2\right )=V_1(u)-V_2(u)=\int_{\R^4}\ln(|x-y|)u^2(x)u^2(y)\textcolor[rgb]{0.00,0.07,1.00}{{\rm d}x{\rm d}y.}
\endaligned
$$
By inequalities
$$
\ln(1+|x-y|)\leq \ln(1+|x|+|y|)\leq \ln(1+|x|)+\ln(1+|y|)\quad \text{for}\,\,\textcolor[rgb]{0.00,0.07,1.00}{x,y\in\R^2,}
$$
it is obtained that
\begin{equation}\label{eqn:es1}
\aligned
B_1(uv,wz)&\leq \int_{\R^4}[\ln(1+|x|)+\ln(1+|y|)]|u(x)v(x)||w(y)z(y)|{\rm d}x{\rm d}y\\
&  {\leq\|u\|_w\|v\|_w \|w\|_2\|z\|_2+\|u\|_2\|v\|_2\|w\|_w\|z\|_w} \quad \mbox{for any} \   u,v,w,z\in X.
\endaligned
\end{equation}
Moreover,  Hardy-Littewood-Sobolev inequality  \cite{Lieb01} gives that
\begin{equation}\label{eqn:es2}
|B_2(u,v)|\leq \int_{\R^4}\frac{u(x)v(y)}{|x-y|}{\rm d}x{\rm d}y\leq C\|u\|_{4/3}\|v\|_{4/3} \quad \mbox{for} \quad u,v\in X,
\end{equation}
or
\begin{equation}\label{eqn:es3}
|V_2(u)| = |B_2(u^2,u^2)| \leq C\|u\|^4_{8/3} \quad \mbox{for} \quad \textcolor[rgb]{0.00,0.07,1.00}{u\in X,}
\end{equation}
which yields that $V_2$ is  finite in $L^{8/3}(\R^2)$.

The following lemmas can be found in \cite{Cingolani16}.
\begin{lemma}\label{Lem:cw1}
Let $\{u_n\}$ be a sequence in $L^2(\R^2)$ such that $u_n\rightarrow u\in L^2(\R^2)\setminus\{
0\}$ pointwise almost everywhere in $\R^2$, and let $\{v_n\}$ be a bounded sequence
in $L^2(\R^2)$ such that $\sup_{n\in\N}B_1\left (u_n^2,v_n^2\right )<\infty$.
Then there exist $n_0\in\N$ and $C>0$ such that $\|v_n\|_w<C$ for $n\geq n_0$. Furthermore, if $u_n, v_n$ satisfy additional conditions,
$$
B_1\left (u_n^2,v_n^2\right )\rightarrow0\quad \text{and}\quad \|v_n\|_2\rightarrow0\quad \text{as}\,\,\textcolor[rgb]{0.00,0.07,1.00}{n\rightarrow\infty,}
$$
then $\|v_n\|_w\rightarrow0$ as $n\rightarrow\infty$.
\end{lemma}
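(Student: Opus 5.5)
Since $u\neq 0$ in $L^2(\R^2)$, the plan is to find a fixed ball on which $u_n^2$ carries a uniformly positive amount of mass, and then use the growth of $\ln(1+|x-y|)$ at large distances to convert the bound on $B_1(u_n^2,v_n^2)$ into a bound on the weighted norm $\|v_n\|_w$.

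\emph{Step 1: a uniform mass region.} Choose $R>0$ so large that $\int_{B_R}u^2\,{\rm d}x\ge 2\delta>0$. By Egorov's theorem there is a measurable set $A\subset B_R$ on which $u_n\to u$ uniformly and with $\int_A u^2\,{\rm d}x\ge \delta$. Uniform convergence on the bounded set $A$ gives $n_0$ such that $\int_A u_n^2\,{\rm d}x\ge \delta/2$ for all $n\ge n_0$. This is the only place the hypothesis $u\neq0$ enters.

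\emph{Step 2: a pointwise lower bound for the kernel.} For $x\in A\subset B_R$ and any $y\in\R^2$ we aim for
$$
\ln(1+|x-y|)\ \ge\ \ln(1+|y|)-\ln(1+R),
$$
which follows from $1+|y|\le (1+|x-y|)(1+|x|)\le (1+|x-y|)(1+R)$. Multiplying by $u_n^2(x)v_n^2(y)\ge0$, integrating over $x\in A$ and $y\in\R^2$, and using that the kernel of $B_1$ is nonnegative, we get for $n\ge n_0$
$$
B_1\left(u_n^2,v_n^2\right)\ \ge\ \frac{\delta}{2}\,\|v_n\|_w^2-\ln(1+R)\,\|u_n\|_2^2\,\|v_n\|_2^2 .
$$
Hence
$$
\|v_n\|_w^2\le \frac{2}{\delta}\Bigl(B_1\left(u_n^2,v_n^2\right)+\ln(1+R)\|u_n\|_2^2\|v_n\|_2^2\Bigr).
$$
To control the right-hand side we need $\sup_n\|u_n\|_2<\infty$. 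I will read this as implicit in the setting of \cite{Cingolani16}, where $\{u_n\}$ is bounded in $L^2$. If that is not available, the fix is to drop $\|u_n\|_2$ and integrate only over $x\in A$, which gives the error term $\ln(1+R)\bigl(\int_A u_n^2\,{\rm d}x\bigr)\|v_n\|_2^2$. Its first factor is bounded by uniform convergence on $A$, so dividing by $\int_A u_n^2\,{\rm d}x\ge\delta/2$ yields the estimate without any global $L^2$ bound on $u_n$. With either version, the supremum of $B_1(u_n^2,v_n^2)$ and the $L^2$ bound on $\{v_n\}$ give the uniform bound $\|v_n\|_w<C$ for $n\ge n_0$.

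\emph{Step 3: the vanishing case.} The same inequality (in its localized form) applies. If $B_1(u_n^2,v_n^2)\to0$ and $\|v_n\|_2\to0$, both terms on the right tend to zero, so $\|v_n\|_w\to0$.

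The main obstacle is Step 1. Pointwise a.e.\ convergence alone does not give convergence of $L^2$ mass, so we must restrict to a bounded set where Egorov's theorem makes the convergence uniform. Once a set of uniformly positive mass near the origin is secured, the rest is the elementary kernel inequality of Step 2.
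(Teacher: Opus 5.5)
Your argument is correct. It is essentially the proof of Lemma~2.1 in \cite{Cingolani16}, which the paper cites rather than proves: Egorov's theorem gives a set $A\subset B_R$ on which $u_n$ is uniformly nondegenerate for $n\ge n_0$, and the growth of $\ln(1+|x-y|)$ in $y$, uniformly for $x\in A$, turns the bound on $B_1$ into $\|v_n\|_w^2\le B_1(u_n^2,v_n^2)/\int_A u_n^2\,{\rm d}x+\ln(1+R)\|v_n\|_2^2$. Use only your localized version, since the statement assumes no $L^2$ bound on $\{u_n\}$; after dividing, the factor $\int_A u_n^2\,{\rm d}x$ simply cancels from the error term.
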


\begin{lemma}\label{Lem:cw2}
 If $\{u_n\},\{v_n\},\{w_n\}$ are bounded sequences in $X$
such that $u_n\rightharpoonup u$ weakly in $X$, then
$B_1(v_n w_n,z(u_n-u))\rightarrow 0$ for every $z \in X$.
\end{lemma}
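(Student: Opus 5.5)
The plan is to bound $B_1$ by the splitting inequality \eqref{eqn:es1}, which puts all the logarithmic weight on one variable at a time. Then I would use the compactness of the embedding $X\hookrightarrow L^2(\R^2)$ for the factor $u_n-u$. Since $\ln(1+|x-y|)\le \ln(1+|x|)+\ln(1+|y|)$, for fixed $z\in X$ we get
\begin{equation*}
\big|B_1\big(v_nw_n,z(u_n-u)\big)\big|\le \int_{\R^2}\ln(1+|x|)|v_nw_n|\,{\rm d}x\;\|z(u_n-u)\|_1+\|v_nw_n\|_1\int_{\R^2}\ln(1+|y|)\,|z|\,|u_n-u|\,{\rm d}y .
\end{equation*}
By Cauchy--Schwarz, the prefactors $\int\ln(1+|x|)|v_nw_n|\le \|v_n\|_w\|w_n\|_w$ and $\|v_nw_n\|_1\le\|v_n\|_2\|w_n\|_2$ are bounded uniformly in $n$. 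So it suffices to show that $\|z(u_n-u)\|_1\to0$ and $\int\ln(1+|y|)|z||u_n-u|\,{\rm d}y\to0$.

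The key step is that $u_n\to u$ strongly in $L^2(\R^2)$. Since $u_n-u$ is bounded in $X$, for every $R>0$ we have
\begin{equation*}
\int_{|x|>R}|u_n-u|^2{\rm d}x\le \frac{\|u_n-u\|_w^2}{\ln(1+R)}\le \frac{C}{\ln(1+R)},
\end{equation*}
which is uniformly small for $R$ large. On the ball $B_R$, weak convergence in $X$ implies weak convergence in $H^1(B_R)$. Rellich's theorem then gives strong convergence in $L^2(B_R)$. Combining the two pieces yields $\|u_n-u\|_2\to0$. In particular, $\|z(u_n-u)\|_1\le\|z\|_2\|u_n-u\|_2\to0$.

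For the weighted term, I split again at radius $R$. On $B_R$ the integral is at most $\ln(1+R)\|z\|_2\|u_n-u\|_2$, which tends to $0$ by the previous step. Outside $B_R$, I apply Cauchy--Schwarz with the weight $\ln(1+|y|)$ split evenly between the two factors:
\begin{equation*}
\int_{|y|>R}\ln(1+|y|)|z||u_n-u|\,{\rm d}y\le\Big(\int_{|y|>R}\ln(1+|y|)z^2{\rm d}y\Big)^{1/2}\|u_n-u\|_w .
\end{equation*}
The last factor is bounded, and the first tends to $0$ as $R\to\infty$ because $z\in X$. Choosing $R$ large first and then $n$ large gives the claim.

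The only point needing care is the uniformity in $n$ of the tail estimate. It is supplied precisely by the boundedness of $\|u_n-u\|_w$, since weak convergence in $X$ gives $\sup_n\|u_n\|_X<\infty$. Everything else is routine.
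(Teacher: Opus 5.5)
Your proof is correct and complete. The paper does not prove this lemma (it quotes it from \cite{Cingolani16}), and your argument is the natural one. You split with \eqref{eqn:es1}, putting the weight on one variable at a time. You use the strong convergence $u_n\to u$ in $L^2(\R^2)$, which you could quote from Lemma \ref{Lem:compact0}(i) instead of re-deriving it. You cut off at radius $R$ for the weighted term, which is a quantitative form of $\sqrt{\ln(1+|x|)}\,(u_n-u)\rightharpoonup 0$ in $L^2(\R^2)$; it also has the small advantage of giving convergence of the whole sequence without passing to subsequences.
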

\begin{lemma}\label{Lem:compact0} The following facts hold on $X$.
\begin{itemize}
	
	\item [$(i)$] The space $X$ is compactly embedded in $L^{q}(\R^2)$ for all $q\in[2,+\infty)$.
	
	\item [$(ii)$] The functionals $V_0,V_1,V_2$ are of $C^1$ on $X$. Moreover, $V'_i(u)v=4B_i(u^2,uv), i =0, 1, 2,$ if
$u,v\in X$.
	
	\item [$(iii)$]  $V_2$ is continuously differentiable on $L^{\frac{8}{3}}(\R^2)$.

	\item [$(iv)$] $V_1$ is weakly lower semicontinuous on $H^1(\R^2)$.
	\item [$(v)$] If $\{\phi_n\}\subset X$ is  bounded, there exists $\phi\in X$ such that, up to a subsequence,
$\phi_n\rightharpoonup\phi$ weakly in $X$ and the convergence is uniform on any compact set.
\end{itemize}
\end{lemma}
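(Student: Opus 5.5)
The plan is to check the five items one at a time, using only the two elementary facts already available: the weighted bound \eqref{eqn:es1} for $B_1$ and the Hardy--Littlewood--Sobolev bounds \eqref{eqn:es2}--\eqref{eqn:es3} for $B_2$. These are combined with standard Sobolev facts in dimension two, namely $H^1(\R^2)\hookrightarrow L^q(\R^2)$ for every $q\in[2,\infty)$ and Rellich compactness on bounded domains.

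For $(i)$, take a bounded sequence $\{u_n\}\subset X$ and pass to $u_n\rightharpoonup u$ in $X$. The weight gives a uniform tail estimate:
$$
\int_{|x|>R}|u_n|^2{\rm d}x\le \frac{\|u_n\|_w^2}{\ln(1+R)}\le \frac{C}{\ln(1+R)} .
$$
Rellich's theorem gives $u_n\to u$ in $L^2(B_R)$ for each fixed $R$. Letting $R\to\infty$ then yields $u_n\to u$ in $L^2(\R^2)$. For $q>2$, I interpolate between $L^2$ and $L^{2q}$ using the uniform $H^1$ bound: $\|u_n-u\|_q\le \|u_n-u\|_2^{\theta}\|u_n-u\|_{2q}^{1-\theta}$.

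For $(ii)$ and $(iii)$, I first note that $V_i(u+tv)$ expands as a polynomial in $t$ whose coefficients are $B_i$ evaluated on products of $u$ and $v$. The Gâteaux derivative is therefore $4B_i(u^2,uv)$. For $i=1$, \eqref{eqn:es1} bounds each coefficient by products of $\|\cdot\|_w$ and $\|\cdot\|_2$ norms. The same estimate, applied to $B_1(u_n^2,u_nv)-B_1(u^2,uv)$ split into differences, gives continuity of $u\mapsto V_1'(u)$ from $X$ to $X^*$. Hence $V_1\in C^1(X)$.

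For $i=2$, I work directly in $L^{8/3}$. By Hölder, $u^2$ and $uv$ lie in $L^{4/3}$ when $u,v\in L^{8/3}$, and \eqref{eqn:es2} applies. This makes $v\mapsto 4B_2(u^2,uv)$ a bounded functional on $L^{8/3}$ that depends continuously on $u$, which proves $(iii)$. Composing with the continuous embedding $X\hookrightarrow L^{8/3}$ from $(i)$ gives $V_2\in C^1(X)$. Finally $V_0=V_1-V_2$ completes $(ii)$.

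For $(iv)$, let $u_n\rightharpoonup u$ in $H^1$ and take a subsequence realizing $\liminf V_1(u_n)$. Local Rellich compactness and a diagonal argument give a further subsequence with $u_n\to u$ a.e. in $\R^2$. Since the kernel $\ln(1+|x-y|)$ is nonnegative, Fatou's lemma on $\R^4$ yields $V_1(u)\le\liminf V_1(u_n)$.

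For $(v)$, $X$ is a Hilbert space, so bounded sequences have weakly convergent subsequences $\phi_n\rightharpoonup\phi$. By $(i)$ the convergence is then strong in every $L^q$, $q\in[2,\infty)$, and after a further subsequence it holds a.e. This is where I expect the main obstacle. Genuine uniform convergence on compact sets does not follow from an $X$ bound alone, because $H^1(\R^2)\not\hookrightarrow C$. So I would read ``uniform on compact sets'' as strong convergence in $L^q(K)$ for each compact $K$, plus a.e. convergence, which is exactly what the above argument gives.

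If pointwise uniformity is really needed later, I would only use it for sequences of solutions of \eqref{eqn:2sn}. For those, local elliptic regularity would give bounds in $W^{2,q}_{\rm loc}$, using $\ln\frac1{|x|}\ast\phi_n^2\in L^\infty_{\rm loc}$ via \eqref{eqn:es1} and \eqref{eqn:es2}. These bounds give equicontinuity on compacts, and Arzelà--Ascoli then gives uniform convergence.
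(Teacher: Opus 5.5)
Your arguments for (i)--(iv) are correct and standard: the weighted tail bound $\int_{|x|>R}|u_n|^2\le \|u_n\|_w^2/\ln(1+R)$ together with Rellich and interpolation for (i); expansion of the quartic forms with \eqref{eqn:es1} and Hardy--Littlewood--Sobolev on $L^{8/3}$ for (ii)--(iii); a.e.\ convergence and Fatou for the nonnegative kernel $\ln(1+|x-y|)$ for (iv). The paper itself gives no proof of this lemma, only the reference to \cite{Cingolani16}, where (i)--(iv) are proved along essentially these lines.

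For (v) your suspicion is right, and it can be made definitive. The statement as written is false, so no argument can prove it. The failure is not just a matter of choosing continuous representatives; it occurs already for smooth, compactly supported functions. Fix $\psi\in C_c^\infty(B_1)$ with $\psi(0)=1$ and let $\phi_n(x):=\psi(nx)$. Then $\|\nabla\phi_n\|_2=\|\nabla\psi\|_2$, $\|\phi_n\|_2=\|\psi\|_2/n$ and $\|\phi_n\|_w^2\le \ln 2\,\|\psi\|_2^2/n^2$. So $\{\phi_n\}$ is bounded in $X$, and every weak limit is $0$ because $\phi_n\to 0$ in $L^2$. Yet $|\phi_n|>1/2$ on a neighbourhood of the origin for every $n$, so no subsequence converges uniformly on $\overline{B_1}$ to anything.

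The paper supplies no argument for (v) beyond the blanket citation. Your weakened reading is the correct content: weak convergence in $X$, strong convergence in $L^q$ for $q\in[2,\infty)$, and a.e.\ convergence along a subsequence. Item (v) is never invoked explicitly later. Where locally uniform convergence is actually used, namely $Q_{\omega,\nu}\to Q_\omega$ in the proof of Lemma \ref{lemma3.1}, the paper obtains it from uniform $W^{2,p}$ bounds and compactness into $C^{1,\tau}_{\rm loc}$, exactly as in your last paragraph.
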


%%%%%%%%%%%%%%%%%%%%%%%%%%%%%%%%%%%%%%%%%%%%%%%%%%%%%%%%%%%%%%%%%%%%%%%%%%%%%%%%%%%%%%%

\s{Uniqueness and nondegeneracy of ground-state solutions}
%Let us define the energy functional $S_{\omega,\nu}(u)$ associated with the perturbed
%equation (\ref{eqn:2sn-small}) as follows
%$$
%S_{\omega,\nu}(u):=\frac{1}{2} \int_{\mathbb{R}^{2}}\left(|\nabla u|^{2}+\omega u^{2}\right) {\rm d} x
%-\frac{1}{4} \int_{\mathbb{R}^{4}}
%\ln\left(\frac{1}{|x-y|}\right) u^{2}(x) u^{2}(y) {\rm d} x {\rm d} y-\frac{\nu}{p+1}\int_{\R^2}|u|^{p+1}{\rm d} x,
%$$
%where $u\in X$ and $\nu\in(0,+\infty)$.

This section gives the properties of positive ground-state solution $ Q_{\omega , \nu } \in X$ for \eqref{eqn:2sn}.

It is known that a ground-state solution of \eqref{eqn:2sn} is a solution $u_0(x)$ of \eqref{eqn:2sn} which has a minimum energy, i.e.,
$$
c_{g}:=\inf  \left \{S_{\omega,\nu}(u):\,\,u\in X\setminus\{0\}\,\text{is\,a\,solution\,of\,(\ref{eqn:2sn})}\right \}= S_{\omega, \nu }( u_0) \, ,
$$
where $S_{\omega,\nu}(u)$ is defined in \eqref{1.3.1}. It has been shown in \cite{Choquard08} that if $\nu = 0$, the positive solution of \eqref{eqn:2sn} is unique. Then, we have the following lemma.
\begin{lemma}\label{lemma3.1}
For $p>1$ and $0 \leq  \nu\leq 1$ fixed,  (\ref{eqn:2sn})
has at least one positive ground-state solution $Q_{\omega,\nu}\in X$ satisfying $S_{\omega, \nu }( Q_{\omega,\nu} ) = c_g$. Moreover, when $\nu \rightarrow 0^+$, any positive ground-state solution $Q_{\omega,\nu}\rightarrow Q_{\omega, 0} = Q_{\omega}\in X$, where $ Q_{\omega}$ is the unique positive solution of \eqref{eqn:2sn}  with $\nu = 0$.
\end{lemma}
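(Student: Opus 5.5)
Existence will come from the mountain-pass/Nehari approach of \cite{Cingolani16,Du17}. The convergence $Q_{\omega,\nu}\to Q_\omega$ will come from uniform bounds and compactness, followed by the uniqueness result of \cite{Choquard08,Cingolani16} at $\nu=0$.

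\textbf{Existence.} On $X$ consider $S_{\omega,\nu}$, written as
\[
S_{\omega,\nu}(u)=\tfrac12\|u\|_{H^1}^2+\tfrac14 V_0(u)-\tfrac{\nu}{p+1}\|u\|_{p+1}^{p+1},
\]
and the Nehari manifold $\mathcal N_\nu=\{u\ne0: S_{\omega,\nu}'(u)u=0\}$. Set $m_\nu=\inf_{\mathcal N_\nu}S_{\omega,\nu}$. Following \cite{Cingolani16,Du17}:
\begin{itemize}
\item[(a)] Take a Cerami (or Palais--Smale) sequence at the mountain-pass level, which equals $m_\nu$.
\item[(b)] Use \eqref{eqn:es3} and Lemma \ref{Lem:cw1} to bound the sequence in $H^1$ and in $\|\cdot\|_w$, after translating so that mass does not vanish. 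Non-vanishing follows from Lions' lemma, since $V_2$ and the $L^{p+1}$ term control the energy from below.
\item[(c)] Pass to the limit with Lemma \ref{Lem:compact0}(i),(v) and Lemma \ref{Lem:cw2}. This gives a nontrivial critical point $u$ with $S_{\omega,\nu}(u)\le m_\nu$. Since every nontrivial solution lies on $\mathcal N_\nu$, we get $S_{\omega,\nu}(u)=m_\nu=c_g$.
\end{itemize}
Replacing $u$ by $|u|$ keeps the value on $\mathcal N_\nu$, since all terms depend only on $|u|$ and $\||\nabla u|\|_2\le\|\nabla u\|_2$. So we may take a nonnegative minimizer. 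It is then a critical point by the usual Nehari-manifold argument, and it is positive by the strong maximum principle. Note that the potential $\ln\frac1{|x|}\ast u^2$ is locally bounded, and elliptic regularity applies.

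\textbf{Uniform bounds as $\nu\to0^+$.} Fix $u_*=Q_\omega$ and let $t_\nu>0$ be its Nehari projection. Then $c_g(\nu)\le S_{\omega,\nu}(t_\nu Q_\omega)\le \max_t S_{\omega,0}(tQ_\omega)=c_g(0)$, because the $\nu$-term only lowers the energy. Combined with the identity
\[
S_{\omega,\nu}(u)-\tfrac14 S'_{\omega,\nu}(u)u=\tfrac14\|u\|_{H^1}^2+\nu\Big(\tfrac14-\tfrac1{p+1}\Big)\|u\|_{p+1}^{p+1}+\tfrac1{16}\|u\|_2^4,
\]
which holds up to the exact constant coming from the $\ln$ derivative, this gives a uniform $H^1$ bound when $p\ge3$. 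For $p<3$ the middle coefficient is negative but carries the factor $\nu$, so it is absorbed by Gagliardo--Nirenberg for small $\nu$.

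A uniform lower bound on $\|Q_{\omega,\nu}\|_2$ follows from the Nehari identity and \eqref{eqn:es3}. By the moving-plane result of \cite{Cingolani16}, each $Q_{\omega,\nu}$ is radial about some point, so we translate it to be centered at $0$. The Nehari identity then bounds $V_1(Q_{\omega,\nu})$ uniformly, and Lemma \ref{Lem:cw1}, with $u_n=v_n=Q_{\omega,\nu_n}$ along a subsequence converging a.e. to a nonzero limit, gives a uniform bound on $\|Q_{\omega,\nu}\|_w$. Hence $\{Q_{\omega,\nu}\}$ is bounded in $X$.

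\textbf{Passage to the limit and strong convergence.} Up to a subsequence, $Q_{\omega,\nu_n}\rightharpoonup Q$ in $X$, strongly in $L^q$ for $q\ge2$ by Lemma \ref{Lem:compact0}. The limit $Q$ is nonzero by the $L^2$ lower bound, nonnegative and radial, and it solves \eqref{eqn:2sn} with $\nu=0$ by Lemma \ref{Lem:cw2}. By the strong maximum principle $Q>0$, so $Q=Q_\omega$ by uniqueness \cite{Choquard08,Cingolani16}.

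For strong convergence in $X$, test the equations with $Q_{\omega,\nu_n}-Q_\omega$ and subtract. The $B_2$ and $L^{p+1}$ terms tend to zero by strong $L^q$ convergence and \eqref{eqn:es2}. The $B_1$ term splits as
\[
B_1\big(Q_{\omega,\nu_n}^2,(Q_{\omega,\nu_n}-Q_\omega)^2\big)+\text{a term of the form }B_1\big(Q_{\omega,\nu_n}^2,Q_\omega(Q_{\omega,\nu_n}-Q_\omega)\big).
\]
The second term vanishes by Lemma \ref{Lem:cw2}. Thus
\[
\|Q_{\omega,\nu_n}-Q_\omega\|_{H^1}^2+B_1\big(Q_{\omega,\nu_n}^2,(Q_{\omega,\nu_n}-Q_\omega)^2\big)\to0.
\]
The second part of Lemma \ref{Lem:cw1} then gives $\|Q_{\omega,\nu_n}-Q_\omega\|_w\to0$. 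Since the limit is unique, the whole family converges.

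\textbf{Main obstacle.} The hardest step is the uniform $\|\cdot\|_w$ bound, because it requires fixing the translation. I would handle it through the radial symmetry from moving planes, centering each $Q_{\omega,\nu}$ at the origin. Those same centers are what make the a.e. convergence to a nonzero limit in Lemma \ref{Lem:cw1} legitimate.
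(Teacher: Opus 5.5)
For $p\ge3$ your argument goes through. For $1<p<3$ there is a genuine gap: the identification $c_g=m_\nu:=\inf_{\mathcal N_\nu}S_{\omega,\nu}$ fails, and your uniform $H^1$ bound fails with it.

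\textbf{Why the Nehari approach fails for $1<p<3$.} On $\mathcal N_\nu$ one has $S_{\omega,\nu}(u)=\frac14\|u\|_{H^1}^2-\nu\big(\frac1{p+1}-\frac14\big)\|u\|_{p+1}^{p+1}$, and the second coefficient is negative when $p<3$. Since $V_0(u(\cdot/\lambda))=\lambda^4\big(V_0(u)+\ln\lambda\,\|u\|_2^4\big)$ changes sign in $\lambda$, you can make $V_0(u)=\delta>0$ arbitrarily small while keeping $\|u\|_{H^1}$ and $\|u\|_{p+1}$ bounded and bounded away from zero. The fiber $t\mapsto S_{\omega,\nu}(tu)$ then has a second critical point $t_2(\delta)\to\infty$, and $S_{\omega,\nu}(t_2u)\to-\infty$. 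So for every $\nu>0$ you get $m_\nu=-\infty$. The mountain-pass level is therefore not $m_\nu$, and the claim $S_{\omega,\nu}(u)=m_\nu=c_g$ in your step (c) is false. Equivalently, $\mathcal N_\nu$ is not a natural constraint: with $G(u)=S'_{\omega,\nu}(u)u$, one has $G'(u)u=-2\|u\|_{H^1}^2+(3-p)\nu\|u\|_{p+1}^{p+1}$ on $\mathcal N_\nu$, and this can vanish.

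\textbf{Why your $H^1$ bound fails.} The Nehari identity gives only $S_{\omega,\nu}(u)-\frac14S'_{\omega,\nu}(u)u=\frac14\|u\|_{H^1}^2+\nu(\frac14-\frac1{p+1})\|u\|_{p+1}^{p+1}$; there is no $\|u\|_2^4$ term. As just shown, this quantity is unbounded below on $\mathcal N_\nu$, so it cannot control $\|u\|_{H^1}$. Your absorption does not close either. The estimate $\|u\|_{p+1}^{p+1}\le C\|\nabla u\|_2^{p-1}\|u\|_2^2$ contains the uncontrolled $\|u\|_2$. Smallness of $\nu$ only gives a small-norm/large-norm dichotomy, not a bound.

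\textbf{The missing ingredient.} You need the Pohozaev identity for solutions,
\[
\omega\|u\|_2^2+\tfrac14\|u\|_2^4=\int_{\R^2}\big(\ln\tfrac1{|x|}\ast u^2\big)u^2\,{\rm d}x+\tfrac{2\nu}{p+1}\|u\|_{p+1}^{p+1}.
\]
Combined with the Nehari identity, it gives for every solution $u$
\[
S_{\omega,\nu}(u)=\tfrac{\omega}{4}\|u\|_2^2+\tfrac1{16}\|u\|_2^4+\tfrac{(p-2)\nu}{2(p+1)}\|u\|_{p+1}^{p+1},\qquad \|\nabla u\|_2^2=\tfrac14\|u\|_2^4+\tfrac{(p-1)\nu}{p+1}\|u\|_{p+1}^{p+1}.
\]
Pair this with the upper bound $c_g\le c_{mp}\le\max_{t>0}S_{\omega,\nu}(tQ_\omega)\le S_{\omega,0}(Q_\omega)$. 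Here $c_{mp}$ is the mountain-pass level, attained by the Du--Weth mountain-pass solution, and the ray through $Q_\omega$ is admissible because $V_0(Q_\omega)<0$.
\begin{itemize}
\item For $2\le p<3$, the first identity bounds $\|u\|_2$, and the second then bounds $\|\nabla u\|_2$ because $p-1<2$.
\item For $1<p<2$, a Gagliardo--Nirenberg/Young absorption works, since the exponents that appear, $2p$ and $4/(3-p)$, are below $4$.
\end{itemize}
This is what the paper's appeal to the Pohozaev identity and \cite[Proposition 3.3]{Du17} provides. For existence itself you should simply cite \cite{Du17}.

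\textbf{A smaller ordering issue.} Lemma \ref{Lem:cw1} requires $Q_{\omega,\nu_n}\to Q\neq0$ a.e.\ before you have the $X$-bound. You justify $Q\neq0$ by $L^2$-compactness, which already needs the $X$-bound. Argue as the paper does instead. With the $H^1$ bound and centering at $0$, radial compactness gives convergence in $L^q$ for $q>2$. The Nehari inequality \eqref{bound-0}, together with your $L^2$ lower bound (or $c_g\ge c_1>0$), then rules out $Q=0$.

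\textbf{Your strong-convergence step is correct, and it is a different route from the paper's.} The paper proves the uniform decay \eqref{expo} via Strauss' lemma plus comparison, obtains uniform $W^{2,p}$ bounds, and combines $C^{1,\tau}_{loc}$ convergence with the tails. You instead test the difference of the equations with $Q_{\omega,\nu_n}-Q_\omega$:
\begin{itemize}
\item the $B_2$ and $L^{p+1}$ terms vanish by \eqref{eqn:es2} and strong $L^q$ convergence;
\item the cross terms $B_1\big(Q_{\omega,\nu_n}^2,Q_\omega(Q_{\omega,\nu_n}-Q_\omega)\big)$ and $B_1\big(Q_\omega^2,Q_\omega(Q_{\omega,\nu_n}-Q_\omega)\big)$ vanish by Lemma \ref{Lem:cw2};
\item the second half of Lemma \ref{Lem:cw1} then closes the argument.
\end{itemize}
This needs no regularity or decay theory. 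The paper's heavier route, however, produces \eqref{expo} as a byproduct, and that bound is used later (Proposition \ref{prop3.6}, Lemma \ref{lem-inf}). With your route it would have to be proved separately.
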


\begin{proof}
 It was shown in \cite{Du17} that for $p>1$
and $\nu>0$ fixed,  (\ref{eqn:2sn})
has a positive ground-state solution $Q_{\omega,\nu}\in X$ which has the energy level
$S_{\omega, \nu }( Q_{\omega,\nu}) = c_{g}$. Moreover, (\ref{eqn:2sn}) also has a positive mountain pass solution in $X$.
For $0 \leq \nu\leq 1$,  it was obtained in \cite{Du17} that when $p\geq3$, the ground-state solution is a mountain pass solution as well, which,  by virtue of the mountain pass geometry of functional $S_{\omega,\nu}$, yields that there are two constants $c_1,c_2>0$ independent of $\nu$ such that $c_1\leq c_{g}\leq c_2$. For $1<p<3$, by the fact that $c_g$ is less than mountain pass solution energy and using the corresponding Pohozaev identity in \cite{Du17}, it can be deduced again that  there are two constants $c_1, c_2 > 0$ satisfying  $c_1\leq c_{g}\leq c_2$.
%An alternative argument will be given in Lemma 3.1.
Then, applying a similar argument  to that in \cite[Proposition 3.3]{Du17},
it is obtained that $\{Q_{\omega,\nu}\}_{\nu\in(0,1]}$ is uniformly bounded in $H^1(\R^2)$.
{In addition, it has been shown in \cite{Cingolani16} by using the moving planes method that
all positive solutions in $X$ of equation (\ref{eqn:2sn}) are radially symmetric and strictly decreasing with respect to the distance from
the center of symmetry. Therefore, $Q_{\omega,\nu}$ is positive and radially symmetric.
Without loss of generality (or up to a subsequence), by the uniform boundness of $\{Q_{\omega,\nu}\}_{\nu\in(0,1]}$  in $X$,
there exists $u_\omega\in X$ such that
$Q_{\omega,\nu}\rightharpoonup u_\omega$ weakly in $H^1(\R^2)$ and $Q_{\omega,\nu}\rightarrow u_\omega$ in $L^q(\R^2)$ for $q>2$
as $\nu\rightarrow0^+$.}

Let us show $u_\omega\not\equiv0$. Suppose by contradiction that this is false.
%Suppose that, by contradiction, $u_\omega\equiv0$.
Since $Q_{\omega,\nu}\in X$ is a positive ground-state critical point of functional $S_{\omega,\nu}$,
we have  $S'_{\omega,\nu}(Q_{\omega,\nu})=0$. Then, by the  Hardy-Littleword-Sobolev inequality we get
\begin{equation}\label{bound-0}
\aligned
\|Q_{\omega,\nu}\|_{H^1} ^2+ & \int_{\R^4}\ln(1+|x-y|)Q_{\omega,\nu}^2(y)Q_{\omega,\nu}^2(x){\rm d}x{\rm d}y\\
=&\int_{\R^4}\ln\left(1+\frac{1}{|x-y|}\right)Q_{\omega,\nu}^2(y)Q_{\omega,\nu}^2(x){\rm d}x{\rm d}y
+\nu\int_{\R^2}|Q_{\omega,\nu}|^{p+1}{\rm d} x\\
\leq& C\|Q_{\omega,\nu}\|_{\frac{8}{3}}^4+\int_{\R^2}|Q_{\omega,\nu}|^{p+1}{\rm d} x,
\endaligned
\end{equation}
which yields that
$\|Q_{\omega,\nu}\|_{H^1}^2+B_1\left (Q_{\omega,\nu}^2,Q_{\omega,\nu}^2\right )\rightarrow0$ as $\nu\rightarrow0^+$.
It further implies that
$S_{\omega,\nu}(Q_{\omega,\nu})\rightarrow0$ as $\nu\rightarrow0^+$, a contradiction due to $c_g\geq c_1 > 0$. Consequently, $u_\omega\not\equiv0$.
It follows once again from (\ref{bound-0}) that $B_1(Q_{\omega,\nu}^2,Q_{\omega,\nu}^2)<+\infty$ uniformly in $\nu$.
Therefore, Lemma \ref{Lem:cw1} implies that $\{Q_{\omega,\nu}\}_{\nu\in(0,1]}$ is uniformly bounded in $X$ as well.

%It has been shown in \cite{Cingolani16}  using the moving planes method that
%all positive solutions in $X$ of equation (\ref{eqn:2sn}) are radial symmetric and strictly decreasing in terms of the distance from
%the center of symmetry. Therefore, $Q_{\omega,\nu}$ is positive and radial symmetric.
Without loss of generality (or up to a subsequence),
%by the uniform boundness of $\{Q_{\omega,\nu}\}_{\nu\in(0,1]}$  in $X$,
%there exists $u_\omega\in X$ such that
$Q_{\omega,\nu}\rightharpoonup u_\omega$ weakly in $X$
%and $Q_{\omega,\nu}\rightarrow u_\omega$ in $L^q(\R^2)$ for $q>2$
as $\nu\rightarrow0^+$. Thus, $u_\omega$ is a weak solution of  (\ref{eqn:2sn}) with $\nu=0$.
By \cite[Lemma A.IV]{Berestycki83}, there exists $C>0$ independent of $\nu\in(0,1]$ such that
\begin{align*}
|Q_{\omega,\nu}(x)|&\leq C|x|^{-1}\|Q_{\omega,\nu}\|\leq C|x|^{-1},
\end{align*}
which\ implies\ that $
\lim_{|x|\to\infty}|Q_{\omega,\nu}(x)|= 0\text{\  uniformly for }\nu\in(0,1]. $ By a similar argument to that in \cite[Lemma 4.4]{Liu23} and a comparison principle,
there exist constants $R,C > 0$
independent of $\nu$ such that
\begin{equation}\label{expo}
Q_{\omega,\nu}(x)\leq C\exp\left(-|x|/2\right)\quad\text{for }|x|\geq R.
\end{equation}
In addition, by applying the regularity estimates for elliptic equations,
one can show that  $\|Q_{\omega,\nu}\|_{W^{2,p}(\R^2)}\leq C$ for $p\in(2,+\infty)$, where $C$ is independent of $\nu$.
Sobolev compact embedding theorem in a bounded region
yields that for some $\tau\in(0,1)$
\begin{equation}\label{cov-X}
Q_{\omega,\nu}\rightarrow u_\omega\,\quad\text{in}\,\,C_{loc}^{1,\tau}(\R^2),
\quad\mbox{which, by (\ref{expo}), gives} \quad Q_{\omega,\nu}\rightarrow u_\omega \quad\text{in}\,\, X
\end{equation}
as $\nu\rightarrow0^+$, and then
$c_g=S_{\omega,\nu}(Q_{\omega,\nu})\rightarrow S_{\omega}(u_{\omega})\geq c_1$ as $\nu\rightarrow0^+$.
Thus, by uniqueness of positive solution of equation (\ref{eqn:2sn}) with $\nu=0$ \cite{Choquard08, Cingolani19}, it is obtained that  $u_{\omega}=Q_\omega$ and $Q_{\omega,\nu}\rightarrow Q_\omega$ in $ X$ as $\nu \rightarrow 0^+$, where the convergence may occur for only a subsequence.  However, above argument holds for any sequence of $\nu \rightarrow 0$ which has a subsequence such that  $Q_{\omega,\nu}\rightarrow Q_\omega$ in $ X$. Consequently, $Q_{\omega,\nu}\rightarrow Q_\omega$ in $ X$ for any $\nu \rightarrow 0^+$.
\end{proof}

%%%%%%%%%%%%%%%%%%%%%%%%%%%%%%%%%%%%%%%%%%%%

Now, we apply the Implicit Function Theorem
to prove uniqueness of ground-state solution of problem (\ref{eqn:2sn}) for $\nu>0$ small.
\begin{proposition}\label{prop3.2}
For $0 \leq \nu\leq 1$ small, the positive ground-state solution of (\ref{eqn:2sn}) is unique.
\end{proposition}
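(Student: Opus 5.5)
We argue by contradiction, combining Lemma \ref{lemma3.1} with the nondegeneracy of $Q_\omega$ for $\nu=0$ proved in \cite{Bonheure17}. Suppose there are $\nu_n\to0^+$ and two distinct positive ground states $Q^1_n\neq Q^2_n$ of (\ref{eqn:2sn}) with $\nu=\nu_n$. By the moving plane result of \cite{Cingolani16}, both are radially symmetric about some centers. After translating, we may assume both are radial about the origin, so both lie in the radial subspace $X_r$. Lemma \ref{lemma3.1} (its proof works for any sequence of positive ground states) then gives $Q^i_n\to Q_\omega$ in $X$ for $i=1,2$. The uniform exponential bound (\ref{expo}) and the uniform $W^{2,p}$ bounds also hold for both sequences.

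Set $w_n:=(Q^1_n-Q^2_n)/\|Q^1_n-Q^2_n\|_X$, so $\|w_n\|_X=1$. Subtracting the two equations and dividing by the norm gives
\begin{equation*}
-\DD w_n+\omega w_n-\Big(\ln\tfrac{1}{|x|}\ast (Q^1_n)^2\Big)w_n-\Big(\ln\tfrac{1}{|x|}\ast\big((Q^1_n+Q^2_n)w_n\big)\Big)Q^2_n=\nu_n\,\frac{(Q^1_n)^p-(Q^2_n)^p}{\|Q^1_n-Q^2_n\|_X}.
\end{equation*}
The right side equals $\nu_n p\,\xi_n^{p-1}w_n$ with $\xi_n$ between $Q^1_n$ and $Q^2_n$, hence uniformly bounded. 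So it tends to $0$ in $L^2$.

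Next we extract a limit: up to a subsequence, $w_n\rightharpoonup w$ weakly in $X_r$ and strongly in $L^q$ for $q\ge2$ (Lemma \ref{Lem:compact0}). We pass to the limit in the weak form, using the splitting $\ln\frac1{|x|}=-\ln(1+|x|)+\ln(1+\frac1{|x|})$, the estimates (\ref{eqn:es1})--(\ref{eqn:es2}), and Lemma \ref{Lem:cw2}. This shows that $w$ is a radial solution of $L_{+,0}w=0$. The nondegeneracy of $Q_\omega$ from \cite{Bonheure17} says $\ker L_{+,0}=span\{\pl_{x_1}Q_\omega,\pl_{x_2}Q_\omega\}$. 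This kernel contains no nonzero radial function, so $w=0$.

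The main step is to rule out $w=0$, which contradicts $\|w_n\|_X=1$. To do so, test the equation for $w_n$ with $w_n$ itself. This gives
\begin{equation*}
\|w_n\|_{H^1}^2+B_1\big((Q^1_n)^2,w_n^2\big)+2B_1\big(\tfrac{Q^1_n+Q^2_n}{2}w_n,Q^2_nw_n\big)\cdot 2=B_2\text{-terms}+o(1).
\end{equation*}
The $B_2$-terms tend to $0$ by HLS and $w_n\to0$ in $L^{8/3}$. The cross $B_1$ term is handled as follows. The products $Q^i_nw_n$ converge to $0$ in $L^1$, and the exponential decay (\ref{expo}) bounds their $\ln(1+|x|)$-moments uniformly. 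Hence by (\ref{eqn:es1}) and $\|w_n\|_2\to0$, this term tends to $0$.

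The $L^2$ part of the $H^1$ norm is already small, so the identity gives $\|\nabla w_n\|_2\to0$ and $B_1((Q^1_n)^2,w_n^2)\to0$. Lemma \ref{Lem:cw1} applies with $u_n=Q^1_n\to Q_\omega\ne0$ a.e. and $v_n=w_n$, and gives $\|w_n\|_w\to0$. Therefore $\|w_n\|_X\to0$, a contradiction, so the positive ground state is unique (up to translation) for all small $\nu$.

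The delicate point I expect is controlling the sign-changing nonlocal terms so that the energy identity forces all three pieces of the $X$-norm to vanish. In particular, the cross term $B_1(Q^1_nw_n,Q^2_nw_n)$ is not sign-definite. I would handle it through (\ref{eqn:es1}), splitting the weight onto the exponentially decaying factors $Q^i_n$ so that only $\|w_n\|_2$ appears with $w_n$.
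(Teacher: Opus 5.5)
Your argument is correct, but it follows a different route from the paper. The paper uses the implicit function theorem. It regards $F(\nu,u)=-\DD u+\omega u-(\ln\frac{1}{|x|}\ast u^2)u-\nu|u|^{p-1}u$ as a map on radial functions. In the Appendix it proves, via Lax--Milgram for $\mathcal{L}+\lambda$ and a Fredholm alternative, that $\mathcal{L}=D_uF(0,Q_\omega)\colon\tilde{X}_{rad}\to L^2_{rad}(\R^2)$ is a bijection with bounded inverse. This yields a neighborhood $\Phi$ of $Q_\omega$ in which $F(\nu,\cdot)=0$ has exactly one solution for small $\nu$. Lemma \ref{lemma3.1} is then invoked to place any two radial ground states inside $\Phi$.

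Your blow-up argument on the normalized difference $w_n$ uses only the injectivity half of that input, namely the trivial radial kernel from \cite{Bonheure17}. It combines this with the compact embedding of Lemma \ref{Lem:compact0}(i) and with Lemma \ref{Lem:cw1}, which upgrades control of $\|w_n\|_{H^1}$ and $B_1\big((Q^1_n)^2,w_n^2\big)$ to control of $\|w_n\|_X$. It therefore dispenses with the surjectivity and open-mapping step of the Appendix and with the $C^1$ regularity of $F$. It also sidesteps a point the paper passes over quickly: $\Phi$ is a neighborhood in $\tilde{X}_{rad}$, whereas Lemma \ref{lemma3.1} only gives convergence in $X$.

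What the implicit function theorem buys in exchange is more information: a $C^1$ branch $\nu\mapsto\bar{u}_{\omega,\nu}$ of solutions through $Q_\omega$, i.e. local existence and smooth dependence on $\nu$. Your argument gives uniqueness only, though existence is supplied anyway by \cite{Du17}.

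One harmless slip: in your tested identity the cross term should be $B_1\big((Q^1_n+Q^2_n)w_n,\,Q^2_nw_n\big)$ with coefficient one, not two. Since you show this term tends to zero, the slip does not affect the conclusion.
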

\begin{proof}
Since it has {been proved in \cite{Cingolani16}} that positive ground-state solutions of (\ref{eqn:2sn})  are radially symmetric, we only need to focus on radially symmetric solutions.
First, define $F:\mathbb{R}^+\times X_{rad}\to L_{rad}^2(\R^2)$ as
\begin{equation*}
F(\nu,u):=-\DD u+\omega u-\bigg(\ln\frac{1}{|x|}\ast|u|^2\bigg)u-\nu |u|^{p-1}u.
\end{equation*}
Then, for every $v\in X_{rad}$,
\begin{equation*}
\left ( F(\nu,u), v\right )_{L^2} =\int_{\mathbb{R}^2}\left(\nabla u\nabla v+\omega uv
-\bigg(\ln\frac{1}{|x|}\ast|u|^2\bigg)uv-\nu|u|^{p-1}uv\right){\rm d}x.
\end{equation*}
Clearly $F(0,Q_{\omega})=0$ and $F$ is continuously differentiable in a neighborhood of $(0,Q_{\omega})$.
Moreover, the Fr\'{e}chet derivative of $F$ with respect to $Q_{\omega}$,
denoted by $D_{u}F(\nu,Q_{\omega}):\tilde{X}_{rad}\to L_{rad}^2(\R^2)$,
satisfies
\begin{equation*}
D_{u}F(\nu,u)v=-\DD v+\omega v-\bigg(\ln\frac{1}{|x|}\ast |u|^2\bigg)v-2\bigg(\ln\frac{1}{|x|}\ast uv\bigg)u-\nu p|u|^{p-2}uv,
\,\ \forall v\in \tilde{X}_{rad},
\end{equation*}
where $\tilde{X}_{rad}$ is the space of all radially symmetric functions in $\tilde{X}$.

Next, define an operator $\mathcal{L}:\,\tilde{X}_{rad}\to L_{rad}^2(\R^2)$ as
\begin{equation*}
\mathcal{L}(v):=D_uF(0,Q_{\omega})v=-\DD v+\omega v-\bigg(\ln\frac{1}{|x|}\ast |Q_{\omega}|^2\bigg)v
-2\bigg(\ln\frac{1}{|x|}\ast Q_{\omega}v\bigg)Q_{\omega}.
\end{equation*}
It is known from \cite{Bonheure17} that the kernel of $\mathcal{L}$ on $\tilde{X}_{rad}$ is trivial (it is noted that the kernel of $\mathcal{L}$ on $\tilde{X}$ is two-dimensional and spanned
by the partial derivatives of $Q_{\omega}$).

\noindent{\it Claim:  $\mathcal{L}:\tilde{X}_{rad}\to L_{rad}^2(\R^2)$ is one-to-one and  the inverse
$\mathcal{L}^{-1}:L_{rad}^2(\R^2)\to \tilde{X}_{rad}$ is continuous. }

\noindent The proof of the claim will be given in Appendix. Thus,
by the Implicit Function Theorem, there exist $\nu_0>0$ and a neighborhood $\Phi$ of $Q_\omega$  in $\tilde{X}_{rad}$
%%
%\footnote{[ZWN260604] revised. Please check}
%%
such that, for every $\nu\in [0,\nu_0)$, the equation $F(\nu,u)=0$ has
a unique solution $\bar{u}_{\omega,\nu}\in \Phi$ and the curve $g:[0,\nu_0)\rightarrow
\tilde{X}_{rad}$ with $g(\nu ) =  \bar{u}_{\omega,\nu}$ is
of class $C^1$.

Assume that
there exist sequences $\{u_{\omega,\nu_n}\}$ and $\{v_{\omega,\nu_n}\}$ of positive
ground-state solutions such that
$u_{\omega,\nu_n}\ne v_{\omega,\nu_n}$ are radially symmetric and ${\nu_n}\to 0$. By Lemma \ref{lemma3.1}, they both converge to $Q_\omega$ in $X$,
that is,
\begin{equation}\label{cov-X+}
u_{\omega,\nu_n}\to Q_\omega,\quad\text{and}\quad v_{\omega,\nu_n}\to Q_\omega,\ \mbox{strongly in}\ X.
\end{equation}
Therefore, for $n$ large enough, the equation  $F(\nu_n,u)=0$ has two solutions in $\Phi$
such that $\nu_n<\nu_0$, a contradiction. Therefore, there exists $\nu^*>0$ such that for any $\nu\in[0,\nu^*)$
equation (\ref{eqn:2sn}) has only one positive ground-state solution.
\end{proof}

%%%%%%%%%%%%%%%%%%%%%%%%%%%%%%%%%%

We are ready to  prove the nondegeneracy of the unique positive radially symmetric
ground-state solution $Q_{\omega,\nu}$ of (\ref{eqn:2sn}) when $\nu>0$ is small.
In order to obtain the result,  consider the following minimization problem with a constraint.
For any $u\in X$, let
$$
\mathcal{E}(u):=\frac{1}{2}\int_{\R^2}|\na u|^2{\rm d}x-\frac{1}{4}\int_{\R^4}\ln\frac{1}{|x-y|}u^2(x)u^2(y){\rm d}x{\rm d}y,
\quad \mathcal{F}(u):=\frac{1}{2}\int_{\R^2}|u|^2{\rm d}x,
$$
and define
\be\lab{mini}
c_{min}:=\inf_{u\in\mathcal{M}}\mathcal{E}(u),\quad \mbox{where}\quad \mathcal{M}:=\left\{u\in X:\mathcal{F}(u)=\mathcal{F}(Q_\omega)\right\},
\ee
where $Q_\omega$ is the unique radially symmetric positive solution of equation \re{eqn:2sn} with $\nu=0$.

\begin{lemma}\lab{ground} The following conclusions hold:
\begin{itemize}
\item [{\rm(i)}] $-\infty<c_{min}:=\inf_{u\in\mathcal{M}}\mathcal{E}(u)<+\infty$.
\item [{\rm(ii)}] $c_{min}$ can be achieved by $Q_\omega$.
\end{itemize}
\end{lemma}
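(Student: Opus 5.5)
Both parts come down to a uniform lower bound for $\mathcal{E}$ on $\mathcal{M}$, existence of a minimizer by concentration–compactness adapted to the non‑translation‑invariant space $X$, and identification of that minimizer with $Q_\omega$ via the uniqueness result for $\nu=0$.

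\textbf{Part (i).} The upper bound is trivial, since $Q_\omega\in\mathcal{M}$ gives $c_{min}\le\mathcal{E}(Q_\omega)<+\infty$. For the lower bound we write
$$\mathcal{E}(u)=\tfrac12\|\nabla u\|_2^2+\tfrac14V_1(u)-\tfrac14V_2(u),$$
using $\ln\frac{1}{|x-y|}=-\ln(1+|x-y|)+\ln(1+\frac1{|x-y|})$. Since $V_1\ge0$, it suffices to bound $V_2$. By \eqref{eqn:es3} and the two‑dimensional Gagliardo–Nirenberg inequality $\|u\|_{8/3}^4\le C\|\nabla u\|_2\|u\|_2^3$, we get, for $u\in\mathcal{M}$ (so $\|u\|_2^2=\|Q_\omega\|_2^2=:m$),
$$\mathcal{E}(u)\ge \tfrac12\|\nabla u\|_2^2-Cm^{3/2}\|\nabla u\|_2\ge -C'm^3 .$$
Hence $c_{min}>-\infty$. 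As a by‑product, minimizing sequences are bounded in $H^1$ and have bounded $V_1$.

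\textbf{Part (ii), existence of a minimizer.} Take a minimizing sequence $\{u_n\}\subset\mathcal{M}$. We may replace $u_n$ by $|u_n|$ and then by its Schwarz symmetrization $u_n^*$. The $L^2$ norm is preserved and $\|\nabla u\|_2$ does not increase. The Riesz rearrangement inequality applied to the decreasing kernel $\ln(1+1/|x-y|)$ shows $V_2$ does not decrease. For $V_1$ we use that $\ln(1+|x-y|)=\int_0^\infty(\ldots)$ can be written as a limit of $c-$(symmetric decreasing kernel), so $V_1$ does not increase under rearrangement given fixed mass. If this is delicate, we instead use the translation invariance of $\mathcal{E}$ and center each $u_n$ so that a fixed fraction of its mass stays in a fixed ball.

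Either way, $u_n\rightharpoonup u$ in $H^1$, $u_n\to u$ a.e., and $u\ne0$. Nonvanishing follows because vanishing would give $V_2(u_n)\to0$ by Lions' lemma, whence $\liminf\mathcal{E}(u_n)\ge0$; but scaling $Q_\omega$ as $t\,Q_\omega(t\cdot)$ with $t\to0$ shows $c_{min}<0$, since $V_0$ picks up a $-\ln t\,m^2$ term. Then Lemma \ref{Lem:cw1} gives $\sup_n\|u_n\|_w<\infty$, so $\{u_n\}$ is bounded in $X$. Lemma \ref{Lem:compact0}(i) yields $u_n\to u$ in every $L^q$, $q\ge2$; in particular $u\in\mathcal{M}$ and $V_2(u_n)\to V_2(u)$. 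Weak lower semicontinuity of $\|\nabla\cdot\|_2$ and of $V_1$ (Lemma \ref{Lem:compact0}(iv)) then gives $\mathcal{E}(u)\le c_{min}$, so $u$ is a minimizer.

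\textbf{Part (ii), identification with $Q_\omega$.} The minimizer $u$ may be taken nonnegative, since $|u|$ is also a minimizer. By the Lagrange multiplier rule on $X$ (valid because $\mathcal{E},\mathcal{F}\in C^1(X)$ by Lemma \ref{Lem:compact0}(ii)), it solves
$$-\Delta u+\lambda u=\left(\ln\tfrac1{|x|}\ast u^2\right)u .$$
The strong maximum principle makes $u$ positive. Next we need $\lambda>0$, which we get by testing the equation with $u$ and combining with a Pohozaev‑type identity. Then a translation and the scaling $x\mapsto\sqrt{\lambda/\omega}\,x$ (with the $\ln$ shift producing an extra $\|u\|_2^2$ multiple, absorbed into the frequency) relate $u$ to a positive solution of \eqref{eqn:2sn} with $\nu=0$. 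Uniqueness of positive solutions \cite{Choquard08,Cingolani16} forces $u$ to equal $Q_\omega$ up to translation, provided $\lambda=\omega$.

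\textbf{Main obstacle.} The hardest step is showing $\lambda=\omega$. Because the logarithmic kernel breaks scaling, the rescaled function solves the equation at a frequency depending on $\lambda$, and the mass constraint alone does not pin $\lambda$ down. My first attempt would be to use the one‑parameter family of positive solutions $Q_\mu$, $\mu>0$, together with the explicit mass relation $\|Q_\mu\|_2^2=\mu\|Q_1\|_2^2$. That relation follows from the dilation $u\mapsto a\,u(bx)$ with $b^2=a^2$ and the log shift. With it, the frequency is uniquely determined by the mass, which yields $\lambda=\omega$ and $\mathcal{E}(Q_\omega)=c_{min}$.
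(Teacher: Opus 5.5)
Part (i) and the compactness skeleton of your existence argument are fine. Two steps are wrong as written, though.

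\textbf{The nonvanishing step fails.} You claim $c_{min}<0$, but this is false in general. Along $u_t=tQ_\omega(t\,\cdot)$ one has $\mathcal{E}(u_t)=\frac{t^2}{2}\|\nabla Q_\omega\|_2^2+\frac14V_0(Q_\omega)-\frac{m^2}{4}\ln t$ with $m=\|Q_\omega\|_2^2$. This tends to $+\infty$ as $t\to0$, because the $-\ln t$ term has the opposite sign from the one you need. Worse, the Nehari and Pohozaev identities give $\|\nabla Q_\omega\|_2^2=m^2/4$ and
$\mathcal{E}(Q_\omega)=\frac{m}{4}\bigl(\frac{m}{4}-\omega\bigr)$.
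By the corrected dilation below, $m\to m_1e^{-2/m_1}>0$ as $\omega\to0^+$, where $m_1=\|Q_1\|_2^2$. So for small $\omega$ the very statement you are proving forces $c_{min}=\mathcal{E}(Q_\omega)>0$.

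The fix uses your own by-product from part (i): $V_1(u_n)$ is bounded along a minimizing sequence, and vanishing contradicts this. Indeed, if $\sup_y\int_{B_R(y)}u_n^2\to0$ for every $R$, then
$V_1(u_n)\ge\ln(1+R)\bigl(m^2-m\sup_x\int_{B_R(x)}u_n^2\bigr)$,
so $V_1(u_n)\to\infty$.

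\textbf{The dilation is miscomputed.} The correct covariance is as follows. If $u>0$ solves $-\Delta u+\lambda u=(\ln\frac{1}{|x|}\ast u^2)u$ with $\|u\|_2^2=M$, then $b^2u(b\,\cdot)$ solves the same equation with frequency $b^2(\lambda+M\ln b)$ and has mass $b^2M$. The amplitude must be $b^2$, not $b$. Hence $\|Q_\mu\|_2^2=\mu\|Q_1\|_2^2$ is false; the true relation is $\mu=\frac{M}{m_1}+\frac{M}{2}\ln\frac{M}{m_1}$ with $M=\|Q_\mu\|_2^2$.

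Your conclusion survives, and you need neither $\lambda>0$ nor Pohozaev. Note also that testing with $u$ plus Pohozaev would not by itself give $\lambda>0$; you would need minimality. Here is the corrected step:
\begin{itemize}
\item The map $b\mapsto b^2(\lambda+m\ln b)$ increases from $0$ to $\infty$ on $(e^{-\lambda/m},\infty)$, so you can pick $b$ there with value $\omega$.
\item Then $b^2u(b\,\cdot)$ is a positive solution at frequency $\omega$, hence a translate of $Q_\omega$ by uniqueness.
\item Comparing masses gives $b^2m=m$, so $b=1$ and $\lambda=\omega$.
\end{itemize}

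\textbf{Comparison with the paper.} With both corrections your proof is complete, and it differs from the paper essentially at this identification step. The paper gets $\lambda_0\ge\omega$ from Pohozaev and $\mathcal{E}(u_0)\le\mathcal{E}(Q_\omega)$. It then excludes $\lambda_0>\omega$ by comparing $S_\omega(Q_\omega)$ with $\max_{t>0}S_\omega(tu_0)$, using the Nehari/mountain-pass characterization of the ground state. Your route instead uses the exact dilation covariance of the logarithmic kernel together with uniqueness at the single frequency $\omega$, which is shorter.
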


\bp
For any $u\in \mathcal{M}$, by the Hardy-Littleword-Sobolev inequality in  \cite{Lieb01}
and the Gagliardo-Nirenberg inequality in \cite{Wein0}, it is obtained that
\begin{equation}\label{eqn:s1}
\aligned
&\int_{\R^4}\ln\left(1+\frac{1}{|x-y|}\right)u^2(x)u^2(y){\rm d}x{\rm d}y
\leq\int_{\R^4}\frac{u^2(x)u^2(y)}{|x-y|}{\rm d}x{\rm d}y\leq C\|u\|^4_{8/3}\leq C\|\na u\|_2\|u\|_2^{3}\, ,
\endaligned
\end{equation}
which yields that
\begin{equation}\label{eqn:s2}
\inf_{u\in\mathcal{M}}\mathcal{E}(u)\ge\inf_{u\in\mathcal{M}}
\left(\frac{1}{2}\|\na u\|_2^2-C\|\na u\|_2\|u\|_2^{3}
\right)\ge\min_{t\in[0,\iy)}\left(\frac{1}{2}t^2-C\|Q_\omega\|_2^3t\right)>-\iy.
\end{equation}
On the other hand, by the Pohozaev identity in \cite{Du17}
$$
\omega\int_{\R^2}Q_\omega^2{\rm d}x+\frac{1}{4}\|Q_\omega\|_2^4
-\int_{\R^4}\ln\frac{1}{|x-y|}Q_\omega^2(x)Q_\omega^2(y){\rm d}x{\rm d}y=0
$$
and the fact that $Q_\omega$ is the unique positive solution of \re{eqn:2sn} with $\nu=0$ which gives a variational identity, it is then deduced that
\begin{equation}\label{eqn:s2+}
\|\na Q_\omega\|_2^2=\frac{1}{4}\|Q_\omega\|_2^4.
\end{equation}
Using the Pohozaev identity again, we obtain
$$
c_{min}\le\mathcal{E}(Q_\omega)=\left(\frac{1}{2}\|Q_\omega\|_2^4-\frac{1}{4}\int_{\R^4}\ln\frac{1}{|x-y|}Q_\omega^2(x)Q_\omega^2(y){\rm d}x{\rm d}y\right)
{<\frac{1}{2}\| Q_\omega\|_2^4+\frac{\omega}{4}\| Q_\omega\|_2^2.}
$$
This completes the proof of  {\rm(i)}.

For {\rm(ii)}, taking any minimization sequence $\{u_n\}$ in $X$ of $c_{min}$ such that $\mathcal{E}(u_n)\rg c_{min}$
as $n\rg\iy$ and $\|u_n\|_2=\|Q_\omega\|_2$,
as arguing in \re{eqn:s1} and \re{eqn:s2}, we see that $\{\|\na u_n\|_2^2\}$ is bounded and then $\{u_n\}$ is
bounded in $H^1(\R^2)$. {Similarly to the arguments in \cite[Proposition 3.1]{Cingolani16},
one uses Lions' Lemma to obtain the existence of a sequence  $\{ x_n\} \subset \R^2$ such that $\tilde{u}_n:=u_n(x-x_n)$ converges
weakly in $H^1(\R^2)$ to some function $u_0\in H^1(\R^2)\setminus\{0\}$ as $n\rightarrow\infty$. Without loss of generality,
we still denote $\tilde{u}_n$ by $u_n$.}
Similar to \re{eqn:s1},  the Hardy-Littleword-Sobolev inequality and the Gagliardo-Nirenberg inequality yield
$$
\aligned
\mathcal{E}(u_n)&=\frac{1}{2}\int_{\R^2}|\na u_n|^2{\rm d}x+\frac{1}{4}B_1\left (u_n^2,u_n^2\right )-\frac{1}{4}B_2\left (u_n^2,u_n^2\right )\\
&\geq \frac{1}{2}\int_{\R^2}|\na u_n|^2{\rm d}x+\frac{1}{4}B_1\left (u_n^2,u_n^2\right )
-C\|\na u_n\|_2\|u_n\|_2^{3},
\endaligned
$$
which implies that $\left |B_1\left (u_n^2,u_n^2\right )\right |\leq C$ uniformly for $n$. Lemma \ref{Lem:cw1} yields $\{\|u_n\|_w^2\}$ is bounded.
Hence, the sequence $\{u_n\}$ is bounded in $X$. It follows from \cite[Lemma 2.8]{Cingolani-Jeanjean} and Lemma \ref{Lem:compact0} that, up to a subsequence, $u_n\rightarrow u_0$ in $X$ with $\| u_0\|_2 = \|Q_\omega \|_2$.
Thus, $c_{min}$
can be achieved
by {$u_0\in \mathcal{M}$}. Furthermore, since $u_0 $ is a minimizer of $\mathcal{E}(u)$ with a constraint $ u \in \mathcal {M}$, there exists a Lagrange multiplier $\lambda_0\in\R$ such that
\begin{equation}\label{eqn:s3}
-\DD u_0-(\ln\frac{1}{|x|}\ast|u_0|^2)u_0=-\lambda_0u_0.
\end{equation}
Hence,  the Pohozaev identity corresponding to (\ref{eqn:s3}) (see \cite{Du17})
$$
\la_0\int_{\R^2}u_0^2{\rm d}x+\frac{1}{4}\|u_0\|_2^4
-\int_{\R^4}\ln\frac{1}{|x-y|}u_0^2(x)u_0^2(y){\rm d}x{\rm d}y=0
\quad\mbox{implies }\quad
\|\na u_0\|_2^2=\frac{1}{4}\|u_0\|_2^4.$$
Thus,  \eqref{eqn:s2+} yields that $\|\na u_0\|_2^2=\|\na Q_\omega\|_2^2$.
By the fact that $c_{min}=\mathcal{E}(u_0)\le\mathcal{E}(Q_\omega)$, it is deduced that
$$
\int_{\R^4}\ln\frac{1}{|x-y|}Q_\omega^2(x)Q_\omega^2(y){\rm d}x{\rm d}y\leq\int_{\R^4}\ln\frac{1}{|x-y|}u_0^2(x)u_0^2(y){\rm d}x{\rm d}y.
$$
From the above facts together with \re{eqn:2sn}, it is obtained that $\lambda_0\geq\omega$.

Since $u_0$ is a minimizer of \eqref{mini} and must keep one sign, without loss of generality,
assume $u_0\geq0$. By maximum principle, $u_0$ is a positive solution of (\ref{eqn:s3})
and thus is unique and spherically symmetric for given $\lambda_0$ (see \cite{Cingolani16}).
{Next, we need to rule out the case $\lambda_0>\omega$, which then gives $\lambda_0=\omega$.
Assume $\lambda_0>\omega$. It is straightforward to see that  (\ref{eqn:s3}) yields
\begin{equation}\label{eqn:s3-1}
S_{\lambda_0}(u_0)=\max_{t\in(0,+\infty)}g(t)
:=\max_{t\in(0,+\infty)}S_{\lambda_0}(tu_0) = \max_{t\in(0,+\infty)}((1/2)t^2 - (1/4)t^4 ) C(u_0)
\end{equation}
 where $C(u_0 ) = \int_{\R^2}|\na u_0 |^2{\rm d}x+\lambda_0\int_{\R^2}|u_0 |^2{\rm d}x $ and
$$
S_{\lambda_0}(u)=\frac{1}{2}\int_{\R^2}|\na u|^2{\rm d}x+\frac{\lambda_0}{2}\int_{\R^2}|u|^2{\rm d}x
-\frac{1}{4}\int_{\R^4}\ln\frac{1}{|x-y|}u^2(x)u^2(y){\rm d}x{\rm d}y,\quad u\in X.
$$
%It is easy to get $g'(t)<0$ for $t>1$ and $g(t)\rightarrow-\infty$ as $t\rightarrow+\infty$.
Moreover, define $h(t)=S_{\omega}(tu_0)$ for $t\in(0,+\infty)$ with
$$
S_{\omega}(u)=\frac{1}{2}\int_{\R^2}|\na u|^2{\rm d}x+\frac{\omega}{2}\int_{\R^2}|u|^2{\rm d}x
-\frac{1}{4}\int_{\R^4}\ln\frac{1}{|x-y|}u^2(x)u^2(y){\rm d}x{\rm d}y,\quad u\in X.
$$
Then, by  (\ref{eqn:s3}) and $\omega<\lambda_0$, it is obtained that  $h (0) = 0$, $h(t)>0$ for $0<t<<1$,
and $h(t)\rightarrow-\infty$ as $t\rightarrow+\infty$, which implies that there must be $t^*\in(0,+\infty)$ such that
\begin{equation}\label{eqn:s3-2}
h(t^*)=\max_{t\in(0,+\infty)}h(t)=\max_{t\in(0,+\infty)}S_{\omega}(tu_0)
\end{equation}
and
$h'(t^*)=0$. Since
\begin{equation}
h'(t^*)=(t^*)^3\left[\frac{\int_{\R^2}|\na u_0|^2{\rm d}x+\omega\int_{\R^2}|u_0|^2{\rm d}x}{(t^*)^2}
-\int_{\R^4}\ln\frac{1}{|x-y|}u_0^2(x)u_0^2(y){\rm d}x{\rm d}y\right]=0, \label {10000}
\end{equation}
 (\ref{eqn:s3}) yields that $t^*\in(0,1)$. It is known from \cite{Stubbe08} that $Q_\omega$ is a
ground-state critical point of $S_\omega$. It can also be deduced from \cite[Theorem 1.1]{Cingolani16}  that $S_\omega (Q_\omega ) \leq \sup_{ t \in \mathbb{R} } S_\omega (t u)  = \sup_{ t \in [0, \infty) } S_\omega (t u) $ for any $u \in X$.
Hence,   (\ref{eqn:s3-2}) gives
\begin{equation}\label{eqn:s3-3}
S_\omega(Q_\omega)\leq h(t^*)=S_{\omega}(t^*u_0)=\max_{t\in(0,+\infty)}S_{\omega}(tu_0).
\end{equation}
Then, \eqref{10000} implies that
\begin{equation}\label{eqn:s3-4}
S_{\omega}(t^*u_0)=\frac{(t^*)^2}4\left(\int_{\R^2}|\na u_0|^2{\rm d}x+\omega\int_{\R^2}|u_0|^2{\rm d}x\right)\, .
\end{equation}
Moreover,
\begin{equation}\label{eqn:s3-5}
\aligned
S_{\omega}(Q_\omega)&={S_{\omega}(Q_\omega)-\frac14S'_{\omega}(Q_\omega)Q_\omega}=\frac{1}4\left(\int_{\R^2}|\na Q_\omega|^2{\rm d}x+\omega\int_{\R^2}|Q_\omega|^2{\rm d}x\right).
\endaligned
\end{equation}
Since $\|\na u_0\|_2^2=\|\na Q_\omega\|_2^2$ and $\|u_0\|_2^2=\|Q_\omega\|_2^2$,
(\ref{eqn:s3-3})-(\ref{eqn:s3-5}) yield $t^*\geq1$, which is a contradiction.
Therefore, $\lambda_0=\omega$.
By the uniqueness, $u_0=Q_\omega$ and
 (ii) is proved.}
\ep

We define a linearized operator at $Q_\omega$ as
\begin{eqnarray}\label{eqn:L+}
L_{+}\vp:=-\DD\vp+\omega\vp-\left (\ln\frac{1}{|x|}\ast Q_{\omega}^2\right )\vp
-2\left (\ln\frac{1}{|x|}\ast Q_{\omega}\vp\right )Q_{\omega},\,\,\vp\in \tilde{X}.\,
\end{eqnarray}
%and call Morse index of $L_{+}$ as the number of negative eigenvalues of $L_{+}$.
The following lemma holds.
\bl\label{X-compact}
The Morse index of operator $L_+$ is $1$ and the eigenvalue problem $L_+f=\lambda f$ possesses a sequence of eigenvalues $\{\lambda_k\}$ approaching to
infinity as $k\rightarrow\infty$ with finite multiplicity for each eigenvalue $\lambda_k$.
\el

\Proof
%%
%\footnote{[ZWN] The following sentence is simplified. Please check}
%%
\textcolor[rgb]{0.00,0.00,1.00}{By Lemma \ref{ground}, $Q_\omega$ is a minimizer of $S_\omega (u) $ on $\mathcal{M}$.}
Given a test function $\phi$ satisfying $(\phi,Q_{\omega})=0$, there is a function $\psi (x) $ such that $\| Q_\omega + \epsilon \phi + \epsilon^2 \psi \|_2 = \| Q_\omega \|_2$ (for example, $\psi = a_0 e^{-\|x\|^2}$ with an appropriate real constant $a_0$). If $\tilde \phi = \phi + \epsilon \psi$,
$$
\aligned
S_{\omega}(Q_\omega+\epsilon \tilde \phi)=&\frac{1}{2}\int_{\R^2}|\na (Q_\omega+\epsilon \tilde \phi )|^2{\rm d}x+\frac{\omega}{2}\int_{\R^2}|Q_\omega
+\epsilon \tilde \phi |^2{\rm d}x\\
&-\frac{1}{4}\int_{\R^4}\ln\frac{1}{|x-y|}[Q_\omega+\epsilon \tilde \phi ]^2(x)[Q_\omega+\epsilon \tilde \phi ]^2(y){\rm d}x{\rm d}y\\
=&S _{\omega}(Q_\omega)+\frac{\epsilon^2}{2}\lan L_+\tilde \phi , \tilde \phi\ran_{L^2}+o(\epsilon^2) =
S _{\omega}(Q_\omega)+\frac{\epsilon^2}{2}\lan L_+\phi,\phi\ran_{L^2}+o(\epsilon^2).
\endaligned
$$
Since $Q_\omega$ is a minimizer of $S_\omega$ on $\mathcal{M}$, the second variation $\lan L_+\phi,\phi\ran\geq0$. By  the minimax realization of the eigenvalues
of self-adioint operators, it is  inferred that $L_+$ has at most one negative eigenvalue, that is, the Morse index of $L_+$ is less than $1$.
Furthermore,
$$
\lan L_+Q_\omega,Q_\omega\ran_{L^2} =-2\int_{\R^2}\left ( |\nabla Q_{\omega}|^2+\omega Q^2_{\omega}\right ) {\rm d}x<0
$$
implies, again by the minimax realization of the eigenvalues, that the Morse index of $L_+$ is $1$.

Next, for a parameter $K>0$, define an operator $T_K: L^2(\R^2)\mapsto \tilde{X}$ as
%%
%\footnote{[ZWN260604] The above is revised.}
%%
$$
T_Kf:=\left(-\DD+K+\omega-\left (\ln\frac{1}{|x|}\ast Q_{\omega}^2\right )-2\left (\ln\frac{1}{|x|}\ast Q_{\omega}\cdot\right )Q_{\omega}\right)^{-1}f,\quad f\in L^2(\R^2).
$$
Then, the general elliptic operator theory (see e.g. \cite{Gilbarg98})
%%
%\footnote{[ZWN260604] citation}
%%
together with Lemmas \ref{Lem:cw1}-\ref{Lem:compact0}
gives that $T_K$ is compact in {$L^2 (\mathbb{R}^2)$}  for large $K>0$.
Therefore, the eigenvalue problem
associated with the inverse
$$
T_K^{-1}=-\DD+K+\omega-\left (\ln\frac{1}{|x|}\ast Q_{\omega}^2\right )-2(\ln\frac{1}{|x|}\ast Q_{\omega}\cdot)Q_{\omega}
$$
has a sequence of positive eigenvalues $\lambda_k^K$ approaching to
infinity as $k\rightarrow\infty$ with finite multiplicity for each eigenvalue $\lambda_k^K$.
Furthermore, the eigenvalue problem
associated with
$L_+$ has a sequence of eigenvalues $\lambda_k^K-K$ approaching to
infinity as $k\rightarrow\infty$ with finite multiplicity for each eigenvalue $\lambda_k^K-K$.
The proof is completed.
\qed

{\begin{lemma}\label{tuihua1}
There exist $\nu^*_1>0 $ and $c>0$ such that
$$
\inf_{\phi\in\mathcal{N}}\frac{\langle L_{+,\nu}\phi,\phi\rangle}{\|\phi\|_X^2}> c ~~~\mbox{for any}~ \nu\in[0,\nu^*_1),
$$
where $\mathcal{N}:=\{\phi\in \tilde{X}\setminus\{0\}:\,(\phi,Q_{\omega,\nu})_{L^2} =(\phi,\partial_{x_j}Q_{\omega,\nu})_{L^2} =0,\,\, j=1,2\}$
and
$$
\aligned
\langle L_{+,\nu}\phi,\phi\rangle_{L^2}=&\|\phi\|^2_{H^1}-\int_{\R^2}\left (\ln\frac{1}{|x|}\ast Q_{\omega,\nu}^2\right )\phi^2{\rm d}x\\
&-2\int_{\R^2}  \left (\ln\frac{1}{|x|}\ast Q_{\omega,\nu}\phi \right )Q_{\omega,\nu}\phi{\rm d}x-p\nu\int_{\R^2} Q_{\omega,\nu}^{p-1}\phi^2{\rm d}x.
\endaligned
$$
\end{lemma}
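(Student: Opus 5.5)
The plan is a contradiction argument combined with compactness, reducing everything to the case $\nu=0$.

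\emph{Step 1: coercivity at $\nu=0$ in $L^2$.} By Lemma \ref{X-compact}, $L_+$ has Morse index one and discrete spectrum. Since the constrained minimization in Lemma \ref{ground} holds, $\langle L_+\phi,\phi\rangle\ge0$ whenever $(\phi,Q_\omega)_{L^2}=0$. By the nondegeneracy result of \cite{Bonheure17}, $\ker L_+=\mathrm{span}\{\partial_{x_1}Q_\omega,\partial_{x_2}Q_\omega\}$.

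We claim $\langle L_+\phi,\phi\rangle>0$ on $\mathcal N_0\setminus\{0\}$. Suppose a minimizer $\phi_0$ of $\langle L_+\phi,\phi\rangle/\|\phi\|_2^2$ over $\mathcal N_0$ gives the value $0$. Its Euler--Lagrange equation reads $L_+\phi_0=\alpha Q_\omega+\beta\cdot\nabla Q_\omega$. Pairing with $\partial_{x_j}Q_\omega$ gives $\beta=0$, using $L_+\partial_jQ_\omega=0$ and $(Q_\omega,\partial_jQ_\omega)=0$.

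\begin{itemize}
\item If $\alpha=0$, then $\phi_0\in\ker L_+$, which contradicts the orthogonality conditions.
\item If $\alpha\ne0$, then $\phi_0=\alpha L_+^{-1}Q_\omega+$(kernel). We then need $(L_+^{-1}Q_\omega,Q_\omega)\ne0$ and the standard Weinstein-type sign. The identity $\langle L_+\phi_0,\phi_0\rangle=\alpha(Q_\omega,\phi_0)=0$ holds automatically, so the contradiction must come from the Lagrange-multiplier structure.
\end{itemize}

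A cleaner route uses the variational characterization directly. Since $Q_\omega$ minimizes $\mathcal E$ on $\mathcal M$, the form $\langle L_+\cdot,\cdot\rangle$ is nonnegative on $Q_\omega^\perp$. On $\mathcal N_0$, a zero of this form is a constrained minimizer of a nonnegative quadratic form, so $L_+\phi_0=\alpha Q_\omega$. Pairing with $\phi_0$ gives $0$, so $\phi_0$ minimizes the form on the whole of $Q_\omega^\perp$. Nonnegativity there, together with Cauchy--Schwarz for the nonnegative form, then forces $L_+\phi_0\in\mathrm{span}\,Q_\omega$.

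To exclude $\alpha\neq0$ I would use $(L_+^{-1}Q_\omega,Q_\omega)<0$ if it holds. Otherwise I would argue via the kernel of the restricted operator together with the discreteness of the spectrum. This is the delicate point.

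Discreteness (Lemma \ref{X-compact}) then upgrades strict positivity to a uniform gap: $\langle L_+\phi,\phi\rangle\ge c_0\|\phi\|_2^2$ on $\mathcal N_0$.

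\emph{Step 2: upgrade to the $X$-norm.} We write $\langle L_{+,\nu}\phi,\phi\rangle$ using the $B_1,B_2$ decomposition:
\[
\langle L_{+,\nu}\phi,\phi\rangle=\|\phi\|_{H^1}^2+B_1(Q_{\omega,\nu}^2,\phi^2)-B_2(Q_{\omega,\nu}^2,\phi^2)+2B_0(Q_{\omega,\nu}\phi,Q_{\omega,\nu}\phi)-p\nu\int Q_{\omega,\nu}^{p-1}\phi^2 .
\]
The term $B_1(Q^2,\phi^2)$ controls $\|\phi\|_w^2$ up to $C\|\phi\|_2^2$, because $\ln(1+|x-y|)\ge\ln(1+|x|)-\ln(1+|y|)$ and $Q_{\omega,\nu}$ is uniformly exponentially decaying by \eqref{expo}. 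The remaining terms, $B_2$, the $\nu$-term, and the cross term $B_0(Q\phi,Q\phi)$, are bounded by $\epsilon\|\phi\|_X^2+C_\epsilon\|\phi\|_2^2$; for the cross term one uses \eqref{eqn:es1} and \eqref{eqn:es2} with exponential decay of $Q$. Hence
\[
\langle L_{+,\nu}\phi,\phi\rangle\ge \tfrac12\|\phi\|_X^2-C\|\phi\|_2^2
\]
uniformly for small $\nu$.

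\emph{Step 3: contradiction argument.} Suppose there are $\nu_n\to0$ and $\phi_n\in\mathcal N_{\nu_n}$ with $\|\phi_n\|_X=1$ and $\langle L_{+,\nu_n}\phi_n,\phi_n\rangle\le 1/n$. By Lemma \ref{Lem:compact0}, along a subsequence $\phi_n\rightharpoonup\phi$ in $X$ and $\phi_n\to\phi$ in $L^q$ for $q\ge2$. Step 2 gives $\|\phi\|_2^2\ge 1/(2C)>0$.

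Since $Q_{\omega,\nu_n}\to Q_\omega$ in $X$ (Lemma \ref{lemma3.1}), and the $W^{2,p}$ bounds give $\partial_jQ_{\omega,\nu_n}\to\partial_jQ_\omega$ in $L^2$, the limit satisfies $\phi\in\mathcal N_0$. Weak lower semicontinuity of the $H^1$ and $B_1$ parts (Lemma \ref{Lem:compact0}(iv)), together with the convergence of the compact parts (Lemma \ref{Lem:cw2} and the $L^{8/3}$ continuity of $B_2$), gives $\langle L_+\phi,\phi\rangle\le\liminf\langle L_{+,\nu_n}\phi_n,\phi_n\rangle\le0$. This contradicts Step 1.

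The main obstacle is Step 1, namely ruling out $\alpha\neq0$, i.e.\ the strict positivity of $L_+$ on $\mathcal N_0$ at $\nu=0$. The remaining steps are routine compactness arguments.
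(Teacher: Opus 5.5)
Your Steps 2 and 3 reproduce the paper's argument: contradiction along $\nu_n\to0^+$, a weak limit in $X$, Fatou for the $H^1$ and $B_1$ parts, and convergence of the $B_2$ and cross terms. They also match it on the nonvanishing of the limit, which holds because $B_1(Q_{\omega,\nu}^2,\phi^2)$ controls $\|\phi\|_w^2$ up to $\|\phi\|_2^2$, and on passing the orthogonality conditions to the limit.

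The genuine gap is Step 1, which you leave conditional. You never exclude $\alpha\neq0$ in $L_+\phi=\alpha Q_\omega$, and your fallback via ``the kernel of the restricted operator plus discreteness'' cannot work. Morse index one, nondegeneracy, nonnegativity on $Q_\omega^\perp$ and discrete spectrum do not by themselves give strict positivity on $\mathcal N_0$. For instance, $L=\mathrm{diag}(-1,1)$ on $\R^2$ with $Q=(1,1)$ has all these properties, yet $\langle L\phi,\phi\rangle=0$ for $\phi=(-1,1)\perp Q$. In general, suppose $(L_+^{-1}Q_\omega,Q_\omega)_{L^2}=0$. Then $\psi:=L_+^{-1}Q_\omega$, with its $\ker L_+$-component removed, lies in $\mathcal N_0$ and satisfies $\langle L_+\psi,\psi\rangle=(Q_\omega,\psi)_{L^2}=0$, so the lemma would be false. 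Extra structure is therefore needed. The paper's proof also just declares the $\nu=0$ positivity a direct consequence of Lemma \ref{X-compact} and nondegeneracy, so it passes over the same point.

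The missing ingredient is the dilation structure of the logarithmic kernel. If $Q$ solves \eqref{eqn:2sn} with $\nu=0$, then $\lambda^2Q(\lambda x)$ solves it with frequency $\lambda^2(\omega+\|Q\|_2^2\ln\lambda)$. Differentiating at $\lambda=1$, or computing directly as in \eqref{zuhe1}--\eqref{zuhe2}, gives
\[
L_+\bigl(2Q_\omega+x\cdot\nabla Q_\omega\bigr)=-\bigl(2\omega+\|Q_\omega\|_2^2\bigr)Q_\omega .
\]
By nondegeneracy, $\phi=-\frac{\alpha}{2\omega+\|Q_\omega\|_2^2}\bigl(2Q_\omega+x\cdot\nabla Q_\omega\bigr)+\vartheta\cdot\nabla Q_\omega$ for some $\vartheta\in\R^2$. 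Since $(x\cdot\nabla Q_\omega,Q_\omega)_{L^2}=-\|Q_\omega\|_2^2$ in $\R^2$, the constraint $(\phi,Q_\omega)_{L^2}=0$ forces $\alpha=0$. Equivalently, $(L_+^{-1}Q_\omega,Q_\omega)_{L^2}=-\|Q_\omega\|_2^2/(2\omega+\|Q_\omega\|_2^2)<0$. Then $\phi\in\ker L_+$, and orthogonality to $\nabla Q_\omega$ gives $\phi=0$.

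If you use the paper's formula, note that the convolution term in \eqref{zuhe1} should carry a factor $2$, i.e.\ $L_+(x\cdot\nabla Q_\omega)=-2\Delta Q_\omega+2(\ln\frac{1}{|x|}\ast Q_\omega^2)Q_\omega-\|Q_\omega\|_2^2Q_\omega$. This changes the constants $a,b$ in the proof of Lemma \ref{X-jiajie} but not the conclusion.

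With this, Step 1 closes. You also do not need the uniform gap, since Step 3 only uses $\langle L_+\phi,\phi\rangle>0$ for the single nonzero limit $\phi\in\mathcal N_0$.
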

\Proof
The lemma is proved by a contradiction if otherwise.  Thus, let sequences $\{\nu_n\}\subset\R$
 and $\{\phi_n\}\subset\mathcal{N}$ satisfying that  $\nu_n\rightarrow0^+$, and
  \begin{equation*}
    \lim_{n\rightarrow\infty}\langle L_{+,\nu_n}\phi_n,\phi_n\rangle_{L^2} =b_*\leq0,\quad
    \left\|\phi_n\right\|_{X}^2=1, \quad(\phi_n,Q_{\omega,\nu_n})_{L^2} =0,\quad (\phi_n,\partial_{x_j}Q_{\omega,\nu_n})_{L^2} =0,\,\,j=1,2.
  \end{equation*}
Since $\left\|\phi_n\right\|_{X}^2=1$, it is obtained that $\{\phi_n\}$ is bounded in $\tilde{X}$ and
there exists a $\phi_0\in \tilde{X}$ such that $\phi_n\rightharpoonup \phi_0$ in $\tilde{X}$ and
$\phi_n\rightarrow  \phi_0, a.e.$ with its derivatives.
Since (\ref{cov-X}) implies that $Q_{\omega,\nu_n}\rightarrow Q_{\omega}$ in $X$, it is deduced that
$Q_{\omega,\nu_n}\phi_n\rightarrow Q_{\omega}\phi_0$ in $X$.
Thus, it follows from (\ref{eqn:es1})-(\ref{eqn:es2}), Lemma \ref{Lem:cw2} and Lemma \ref{Lem:compact0}(i) that, up to a subsequence,
\begin{equation}\label{cov-Xguji1}
\aligned
  &\lim_{n\rightarrow\infty}B_2\left (Q^2_{\omega,\nu_n},\phi_n^2\right )=B_2\left (Q^2_{\omega},\phi_0^2\right )\\
  &\lim_{n\rightarrow\infty}B_2\left (Q_{\omega,\nu_n}\phi_n,Q_{\omega,\nu_n}\phi_n\right )=B_2(Q_{\omega}\phi_0,Q_{\omega}\phi_0)
\endaligned
\end{equation}
and
\begin{equation}\label{cov-Xguji2}
\aligned
 &\lim_{n\rightarrow\infty}\left|B_1(Q_{\omega,\nu_n}\phi_n,Q_{\omega,\nu_n}\phi_n)
  -B_1(Q_{\omega}\phi_0,Q_{\omega}\phi_0)\right|\\
  &=\lim_{n\rightarrow\infty}\left|B_1(Q_{\omega,\nu_n}\phi_n-Q_{\omega}\phi_0,Q_{\omega,\nu_n}\phi_n)
  +B_1(Q_{\omega}\phi_0,Q_{\omega,\nu_n}\phi_n-Q_{\omega}\phi_0)\right|=0.
\endaligned
\end{equation}
Then,  Fatou's lemma
and (\ref{cov-Xguji1})-(\ref{cov-Xguji2}) yield
\begin{equation}\label{eqn:++}
\aligned
  0\geq b_*\geq& \liminf_{n\rightarrow\infty}\langle L_{+,\nu_n}\phi_n,\phi_n\rangle_{L^2} \\
=&\liminf_{n\rightarrow\infty}\bigg(\left\|\phi_n\right\|_{H^1}^2+B_1\left (Q^2_{\omega,\nu_n},\phi_n^2\right )-
  B_2(Q^2_{\omega},\phi_0^2)\\
  &\qquad\qquad +2B_1(Q_{\omega}\phi_0,Q_{\omega}\phi_0)
  -2B_2(Q_{\omega}\phi_0,Q_{\omega}\phi_0)\bigg)\\
\geq&\left\|\phi_0\right\|_{H^1}^2+B_0\left (Q^2_{\omega},\phi_0^2\right )+2B_0\left (Q_{\omega}\phi_0,Q_{\omega}\phi_0\right ),
\endaligned
\end{equation}
which also implies that $\langle L_{+}\phi_0,\phi_0\rangle_{L^2} \leq 0$. We claim that $\phi_0\not\equiv0$. In fact,
if $\phi_0\equiv0$, then it follows from (\ref{eqn:++}) that
\begin{equation}
\lim_{n\rightarrow\infty}\left(\left\|\phi_n\right\|_{H^1}^2+B_1\left (Q^2_{\omega,\nu_n},\phi_n^2\right )\right)=0, \label{eqn:+++}
\end{equation}
which gives by Lemma \ref{Lem:cw1} that $\phi_n\rightarrow0$ in $\tilde{X}$, a contradication with $\|\phi_n\|_X=1$.
 Moreover, $(\phi_n,Q_{\omega,\nu_n})_{L^2} =0$
yields that $(\phi_0,Q_{\omega})_{L^2} =0$, and $(\phi_n,\partial_{x_i}Q_{\omega,\nu_n})_{L^2} =0$
yields that $(\phi_0,\partial_{x_i}Q_{\omega})_{L^2} =0$. As a direct consequence of Lemma \ref{X-compact}
and nondegeneracy of $L_+$ at $Q_\omega$, it is immediately found  that there exists $\tilde{c}>0$ such that
$\langle L_{+}\phi_0,\phi_0\rangle_{L^2} \geq \tilde{c}\|\phi_0\|_X^2 > 0$,  a contradiction with $\langle L_{+}\phi_0,\phi_0\rangle_{L^2} \leq 0$.
The proof of this lemma is completed.
\qed
}

In the following, the Morse index and nondegeneracy of $L_{+, \nu} $ at $Q_{\omega,\nu}$ for $\nu>0$ small enough are obtained.
\begin{proposition}  \label{prop3.6} If $\nu>0$ is small enough, then the Morse index of the linear operator $L_{+,\nu}$ at $Q_{\omega,\nu}$ is 1 and  the operator $L_{+,\nu}$  is nondegenerate.  That is, all the elements in
the kernal  of $L_{+,\nu}$ are given by $v=\xi\cdot\nabla Q_{\omega,\nu}$, where $\xi\in \R^2$.
\end{proposition}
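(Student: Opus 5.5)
The plan is to deduce both assertions from the coercivity in Lemma \ref{tuihua1}, together with min-max characterizations of eigenvalues of the self-adjoint operator $L_{+,\nu}$. The discreteness of its spectrum should follow exactly as in Lemma \ref{X-compact}, since $L_{+,\nu}$ differs from $L_+$ only by lower-order terms that are small for small $\nu$.

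\emph{Morse index.} First, $\nabla Q_{\omega,\nu}$ lies in the kernel. Differentiating \eqref{eqn:2sn} in $x_j$, which is allowed by translation invariance and the regularity and decay \eqref{expo} of $Q_{\omega,\nu}$, gives $L_{+,\nu}\partial_{x_j}Q_{\omega,\nu}=0$. Also $\partial_{x_j}Q_{\omega,\nu}\in\tilde X$, because the exponential decay and the $W^{2,p}$ bounds place these functions in $X$ with $L_{+,\nu}\partial_{x_j}Q_{\omega,\nu}\in L^2$. Next, testing with $Q_{\omega,\nu}$ and using the equation gives
\[
\langle L_{+,\nu}Q_{\omega,\nu},Q_{\omega,\nu}\rangle=-2\int_{\R^2}\Big(\ln\frac1{|x|}\ast Q_{\omega,\nu}^2\Big)Q_{\omega,\nu}^2\,{\rm d}x-(p-1)\nu\int_{\R^2} Q_{\omega,\nu}^{p+1}\,{\rm d}x .
\]
This equals $-2\|Q_{\omega,\nu}\|_{H^1}^2+(1-p)\nu\|Q_{\omega,\nu}\|_{p+1}^{p+1}<0$. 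By min-max, the Morse index is therefore at least $1$. Conversely, Lemma \ref{tuihua1} says $L_{+,\nu}$ is positive definite on $\mathcal N$, which has codimension $3$ in $\tilde X$. If $L_{+,\nu}$ had two negative eigenvalues, their span together with the two kernel directions $\partial_{x_j}Q_{\omega,\nu}$ would give a $4$-dimensional subspace on which $\langle L_{+,\nu}\phi,\phi\rangle\le0$. Such a subspace must meet $\mathcal N\cup\{0\}$ nontrivially, contradicting Lemma \ref{tuihua1}. The kernel vectors are $L_{+,\nu}$-orthogonal to the eigenvectors, so the quadratic form really is nonpositive on the whole span. Hence the Morse index is exactly $1$.

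\emph{Nondegeneracy.} Let $v\in\ker L_{+,\nu}$. Subtract its $L^2$ projection onto $\mathrm{span}\{\partial_{x_1}Q_{\omega,\nu},\partial_{x_2}Q_{\omega,\nu}\}$; these two functions are $L^2$-orthogonal with equal norms by radial symmetry. This reduces to $v\perp\partial_{x_j}Q_{\omega,\nu}$, and we must show $v=0$. By radial symmetry, $Q_{\omega,\nu}$ is $L^2$-orthogonal to $\partial_{x_j}Q_{\omega,\nu}$. Write $v=aQ_{\omega,\nu}+w$ with $w\in\mathcal N\cup\{0\}$. Then
\[
0=\langle L_{+,\nu}v,w\rangle=a\langle L_{+,\nu}Q_{\omega,\nu},w\rangle+\langle L_{+,\nu}w,w\rangle
\]
and
\[
0=\langle L_{+,\nu}v,Q_{\omega,\nu}\rangle=a\langle L_{+,\nu}Q_{\omega,\nu},Q_{\omega,\nu}\rangle+\langle L_{+,\nu}w,Q_{\omega,\nu}\rangle .
\]
The goal is to show that the $2\times2$ form of $L_{+,\nu}$ on $\mathrm{span}\{Q_{\omega,\nu},w\}$ can vanish against $v$ only if $v=0$. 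Using Lemma \ref{tuihua1}, one gets $\langle L_{+,\nu}v,v\rangle=0$ with $v$ lying in the $\{Q\}\oplus\mathcal N$ decomposition. This alone is not decisive, since the form has indefinite signature there.

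\emph{Main obstacle.} The cleanest route, and my first choice, is a perturbation argument instead. Suppose there are $\nu_n\to0$ and kernel elements $v_n\perp\partial_{x_j}Q_{\omega,\nu_n}$ with $\|v_n\|_X=1$. Pass to a weak limit as in the proof of Lemma \ref{tuihua1}, using \eqref{cov-X} and Lemmas \ref{Lem:cw1}, \ref{Lem:cw2}, \ref{Lem:compact0}. The limit $v_0$ satisfies $L_+v_0=0$ and $v_0\perp\partial_{x_j}Q_\omega$. Nondegeneracy at $\nu=0$ \cite{Bonheure17} then forces $v_0=0$. The delicate step is ruling out $v_0=0$. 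From $\langle L_{+,\nu_n}v_n,v_n\rangle=0$, all the compact terms ($B_2$ terms, the $\nu$ term, and the cross $B_1$ term) tend to $0$. This leaves $\|v_n\|_{H^1}^2+B_1(Q_{\omega,\nu_n}^2,v_n^2)\to0$, and Lemma \ref{Lem:cw1} then gives $\|v_n\|_X\to0$, a contradiction. The technical care is in justifying the passage to the limit in the nonlocal terms with $B_1$, since the $\ln(1+|x-y|)$ kernel is unbounded; here I would rely on \eqref{eqn:es1} and Lemma \ref{Lem:cw2}.
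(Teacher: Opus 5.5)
Your proof is correct. For the Morse index it takes a shorter route than the paper; for nondegeneracy it follows essentially the paper's argument.

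\textbf{Morse index.} The paper argues by contradiction along a sequence $\nu_n\to0^+$. It assumes a second eigenvalue $\sigma_{\nu_n,2}<0$ and extracts a weak limit $\phi_2$ of the eigenfunctions. A sign/perturbation argument then gives $\sigma_{\nu_n,2}\to0$. Only after that does it project $\phi_{\nu_n,2}$ off $Q_{\omega,\nu_n}$ to obtain an element of $\mathcal N$ with negative form, contradicting Lemma~\ref{tuihua1}. You instead use that Lemma~\ref{tuihua1} already holds at every fixed $\nu\in[0,\nu_1^*)$. A codimension count at fixed $\nu$ then gives the bound at once. This removes the limiting eigenfunction analysis, which is the least transparent part of the paper's proof.

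\textbf{Nondegeneracy.} You use the same compactness argument as the paper. The only difference is the normalization. You impose $\|v_n\|_X=1$, so the $X$-bound is automatic, but you must exclude $v_0=0$ via Lemma~\ref{Lem:cw1}. The paper normalizes in $L^2$, so $\phi_*\neq0$ follows from the compact embedding, but the uniform $X$-bound has to be derived separately.

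\textbf{A slip.} The correct identity is $\langle L_{+,\nu}Q_{\omega,\nu},Q_{\omega,\nu}\rangle=-2\|Q_{\omega,\nu}\|_{H^1}^2+(3-p)\nu\|Q_{\omega,\nu}\|_{p+1}^{p+1}$; the coefficient is $(3-p)$, not $(1-p)$. It is still negative for small $\nu$. You can see this from the uniform bounds of Lemma~\ref{lemma3.1}, or directly from your first expression, since $\int_{\R^2}\big(\ln\frac{1}{|x|}\ast Q_{\omega,\nu}^2\big)Q_{\omega,\nu}^2\,{\rm d}x=\|Q_{\omega,\nu}\|_{H^1}^2-\nu\|Q_{\omega,\nu}\|_{p+1}^{p+1}>0$.

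\textbf{The direct route you abandoned does close.} Combine your two identities using the symmetry of $L_{+,\nu}$:
$\langle L_{+,\nu}w,w\rangle=-a\langle L_{+,\nu}w,Q_{\omega,\nu}\rangle=a^2\langle L_{+,\nu}Q_{\omega,\nu},Q_{\omega,\nu}\rangle\le0$.
Since $w\in\mathcal N\cup\{0\}$, Lemma~\ref{tuihua1} forces $w=0$. Your second identity then reads $a\langle L_{+,\nu}Q_{\omega,\nu},Q_{\omega,\nu}\rangle=0$, so $a=0$ and $v=0$. Equivalently, $\mathrm{span}\{Q_{\omega,\nu}\}\oplus\ker L_{+,\nu}$ is a nonpositive subspace, so its dimension is at most $3$.

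Nondegeneracy, like the Morse bound, therefore follows at each fixed small $\nu$ from Lemma~\ref{tuihua1} alone. Your perturbation argument is valid but redundant.
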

\begin{proof}
Define the eigenvalue problem
for the operator $L_{+,\nu}$ as $L_{+,\nu}\phi=\sigma \phi$ for $\phi\in \tilde{X}$, i.e.,
\begin{equation}\label{cov-Xguji3}
-\triangle \phi+\omega\phi-\left(\ln\frac{1}{|x|}\ast Q_{\omega,\nu}^2\right)\phi
-2\left(\ln\frac{1}{|x|}\ast Q_{\omega,\nu}\phi\right)Q_{\omega,\nu}
-p\nu Q_{\omega,\nu}^{p-1}\phi=\sigma \phi.
\end{equation}
%Denote $W:\R^2\rightarrow\R$ by
%$$
%W(x):=-\left(\ln\frac{1}{|x|}\ast Q_{\omega,\nu}^2\right)
%-2\left(\ln\frac{1}{|x|}\ast Q_{\omega,\nu}\right)Q_{\omega,\nu}-p\nu Q_{\omega,\nu}^{p-1}.
%$$
By a similar argument to that used for Lemma \ref{X-compact}, it is deduced that
there exist a nondecreasing sequence of eigenvalues $\sigma_{\nu,i}\in\R$
 satisfying $\sigma_{\nu,i}\rightarrow+\infty$ as $i\rightarrow+\infty$
and a corresponding orthonormal basis of eigenfunctions $\phi_{\nu,i}\subset \tilde{X}$, namely,
$L_{+,\nu}\phi_{\nu,i}=\sigma_{\nu,i} \phi_{\nu,i}$. Let us normalize the sequence of eigenfunctions with
$\|\phi_{\nu,i}\|_2=1$. From earlier analysis,
$0$ is an eigenfunction of $L_{+,\nu}$ and
\begin{equation}\label{cov-Xguji3+}
\{\partial_{x_1}Q_{\omega,\nu},\partial_{x_2}Q_{\omega,\nu}\}\subset \mbox{ker}\,  L_{+,\nu},
\end{equation}
which implies that the first eigenvalue $\sigma_{\nu,1}$ is negative and simple,
and the first eigenfunction $\phi_{\nu,1}>0$.

First, show that there exists $\nu_2^*>0$ such that
the Morse index of $L_{+,\nu}$ is $1$ for $\nu\in(0,\nu_2^*)$.
If otherwise, then there exists a sequence $\{\nu_n\}\subset\R$ satisfying $\nu_n\rightarrow0^+$
with corresponding another eigenfunction $\psi_{\nu_n}\in \tilde{X}$ associated
with some eigenvalue $\sigma_{\nu_n}<0$. Without loss of generality, let  $\psi_{\nu_n}=\phi_{\nu_n,2}$ and
$\sigma_{\nu_n}=\sigma_{\nu_n,2}$. By $(Q_{\omega,\nu_n},\partial_{x_i}Q_{\omega,\nu_n})_{L^2}=(\phi_{\nu_n,2},\partial_{x_i}Q_{\omega,\nu_n})_{L^2}=0$, if
$$
w_{\nu_n}:=\phi_{\nu_n,2}-\frac{(\phi_{\nu_n,2},Q_{\omega,\nu_n})_{L^2}}{\|Q_{\omega,\nu_n}\|_2^2}Q_{\omega,\nu_n},
$$
then $(w_{\nu_n},Q_{\omega,\nu_n})_{L^2}=(w_{\nu_n},\partial_{x_i}Q_{\omega,\nu_n})_{L^2}=0,i=1,2$.
Hence,
\begin{equation}\label{cov-Xguji4}
|\langle L_{+,\nu_n}\phi_{\nu_n,2},\phi_{\nu_n,2}\rangle_{L^2}|=-\sigma_{\nu_n,2} \|\phi_{\nu_n,2}\|_2^2\leq C,
\end{equation}
which implies by (\ref{expo}) that $C$ is independent of $\nu_n$. By (\ref{expo}) again, a straightforward calculation shows that
\begin{equation}
|B_0(Q_{\omega,\nu_n}\phi_{\nu_n,2},Q_{\omega,\nu_n}\phi_{\nu_n,2})|\leq C,\quad{and}\quad
|B_2\left (Q_{\omega,\nu_n}^2,\phi_{\nu_n,2}^2\right )|\leq C. \nonumber
\end{equation}
%where we have the fact that $\left\|\int_{\R^2}\frac{1}{|\cdot-y|}Q^2_{\omega,\nu_n}(y){\rm d}y\right\|_\infty\leq C$.
Then, (\ref{cov-Xguji4}) and Lemma \ref{Lem:cw1} give that $\|\phi_{\nu_n,2}\|_{X}\leq C$ \textcolor[rgb]{0.00,0.07,1.00}{uniformly with respect to}
%%
%\footnote{[ZWN] uniformly for or with respect to??}
%%
%for
$\nu_n$, which implies that there exists $\phi_{2}\in \tilde{X}$ such that, up to a subsequence, $\phi_{\nu_n,2}\rightharpoonup\phi_{2}$ in $\tilde{X}$ as $\nu_n\rightarrow0^+$.
For any fixed $\psi\in C_0^\infty(\R^2)$, $\langle L_{+,\nu_n}\phi_{\nu_n,2},\psi\rangle_{L^2}
=\sigma_{\nu_n,2}(\phi_{\nu_n,2},\psi)_{L^2}$
yields that there exists $\sigma_{2}\in(-\infty,0]$ such that, again up to subsequence,
\begin{equation}
\langle L_{+}\phi_{2},\psi\rangle_{L^2} =\lim\limits_{\nu_n\rightarrow0^+}\langle L_{+,\nu_n}\phi_{\nu_n,2},\psi\rangle_{L^2}
=\lim\limits_{\nu_n\rightarrow0^+}\sigma_{\nu_n,2}(\phi_{\nu_n,2},\psi)_{L^2}=\sigma_{2}(\phi_2,\psi)_{L^2}, \label{3.23}
\end{equation}
where $L_{+}$ is given in \eqref{eqn:L+}.
%Using the elliptic regularity estimate,
%one has $\|\phi_{\nu_n,2}\|_{W^{2,p}(\R^2)}\leq C$ for %$p\in(2,+\infty)$ with $C$ independent of $\nu_n$.
%Sobolev's embedding theory yields that for some $\tau\in(0,1)$ such that
%$\phi_{\nu_n,2}\rightarrow\phi_{2}$ in $C_{loc}^{1,\tau}(\R^2)$ as $\nu_n\rightarrow0^+$.
Since $\phi_{\nu_n,2}$ does not keep the same sign, it can be deduced that $\phi_{2}$ has no definite sign either (here note that $\phi_2 \not \equiv 0$ by Lemma \ref{Lem:compact0}(i)). In fact, if otherwise, without loss of generality, $\phi_{2}$ is non-negative. Then, applying elliptic operator theory gives  $\phi_{2} > 0$. Thus, from
\eqref{3.23} and Proposition \ref{prop3.2}, $\sigma_2$ is the first eigenvalue of $L^+$ corresponding to the eigenfunction $\phi_2$.
Since $\sigma_{\nu_n,1}$ is the first eigenvalue of $L_{+, \nu}$ corresponding to eigenfunction $\phi_{\nu_n,1}$, by
perturbation theory, it is obtained that $\sigma_{\nu_n,1} \rightarrow \sigma_2$ and $\phi_{\nu_n,1} \rightarrow \phi_2$ in $L^2(\mathbb{R}^2)$. Therefore,
$1 = \| \phi_2\|^2_2 = \lim_{n\rightarrow \infty} (\phi_{\nu_n , 1} , \phi_{\nu_n , 2})_{L^2}
=  0$,  which is a contradiction.
Thus, from the spectral properties of $L_{+}$, one obtains that  $\sigma_{2}=0$,
i.e., $\sigma_{\nu_n,2}\rightarrow0$ as $\nu_n\rightarrow0^+$.
%Set
%$$
%w=\psi-\frac{(\psi,Q_{\omega,\nu})}{\|Q_{\omega,\nu}\|_2^2}Q_{\omega,\nu},
%$$
%then $(w,Q_{\omega,\nu})=0$.
Then, a straightforward calculation yields that
$$
\aligned
( & L_{+,\nu_n}w_{\nu_n} ,w_{\nu_n})_{L^2}\\
=&(L_{+,\nu_n}\phi_{\nu_n,2},\phi_{\nu_n,2})_{L^2}
+\left(\frac{(\phi_{\nu_n,2},Q_{\omega,\nu_n})_{L^2}}{\|Q_{\omega,\nu_n}\|_2^2}\right)^2(L_{+,\nu_n}
Q_{\omega,\nu_n},Q_{\omega,\nu_n})_{L^2}\\
&-
2\frac{(\phi_{\nu_n,2},Q_{\omega,\nu_n})_{L^2}}{\|Q_{\omega,\nu_n}\|_2^2}(L_{+,\nu_n}\phi_{\nu_n,2},Q_{\omega,\nu_n})_{L^2}\\
=&\sigma_{\nu_n,2}(\phi_{\nu_n,2},\phi_{\nu_n,2})_{L^2}-2\sigma_{\nu_n,2}\frac{|(\phi_{\nu_n,2},Q_{\omega,\nu_n})_{L^2}|^2}{\|Q_{\omega,\nu_n}\|_2^2}
+\left(\frac{(\phi_{\nu_n,2},Q_{\omega,\nu_n})_{L^2}}{\|Q_{\omega,\nu_n}\|_2^2}\right)^2(L_{+,\nu_n}
Q_{\omega,\nu_n},Q_{\omega,\nu_n})_{L^2}\\
<&-\sigma_{\nu_n,2}\frac{|(\phi_{\nu_n,2},Q_{\omega,\nu_n})_{L^2}|^2}{\|Q_{\omega,\nu_n}\|_2^2}
+\left(\frac{(\phi_{\nu_n,2},Q_{\omega,\nu_n})_{L^2}}{\|Q_{\omega,\nu_n}\|_2^2}\right)^2
\left[-2\omega\|Q_{\omega,\nu_n}\|_2^2+(3-p)\nu_n\|Q_{\omega,\nu_n}\|_{p+1}^{p+1}\right]\\
\leq&\left (-\sigma_{\nu_n,2}-2\omega+(3-p)\nu_n\|Q_{\omega,\nu_n}\|_{\infty}^{p-1}\right )\frac{|(\phi_{\nu_n,2},Q_{\omega,\nu_n})|^2}
{\|Q_{\omega,\nu_n}\|_2^2}<0
\endaligned
$$
for $n$ large enough, which contradicts to Lemma \ref{tuihua1} because of $(w_{\nu_n},Q_{\omega,\nu_n})=(w_{\nu_n},\partial_{x_i}Q_{\omega,\nu_n})=0,i=1,2$.
Therefore, there exists $\nu_2^*>0$ such that
the Morse index of $L_{+,\nu}$ is $1$ for $\nu\in(0,\nu_2^*)$.

Next, show that $\mbox{ker}\,  L_{+,\nu}=\{\zeta\cdot\nabla Q_{\omega,\nu}\}$ for $\nu>0$ small enough.
First, it is known
that (\ref{cov-Xguji3+}) holds.
Assume that there exists a sequence $\{\nu_n\}\subset\R$ satisfying $\nu_n\rightarrow0^+$
with another element $\phi_{\nu_n,*}\in \mbox{ker}\,  L_{+,\nu_n}\cap \mbox{span} \{ \partial _{x_1} Q_{\omega, \nu_n }, \partial _{x_2} Q_{\omega, \nu_n}\}^{\perp}$ satisfying $\| \phi_{\nu_n,*} \|_2 = 1$ as $\nu_n\rightarrow0^+$.
Then,
%using the variational characterization of eigenvalues yields
$
\langle L_{+,\nu_n}\phi_{\nu_n,*},\phi_{\nu_n,*}\rangle = 0 $ and  $\phi_{\nu_n,*}$ is bounded in $X$ uniformly for $n$ such that
there exists $\phi_{*}\in \tilde{X}$ satisfying $\phi_{\nu_n,*}\rightharpoonup \phi_{*}$ in $ \tilde{X}$ and
$\phi_{\nu_n,*}\rightarrow\phi_{*}$ in $L^q(\R^2)$ with $q\in[2,+\infty)$
as $\nu_n\rightarrow0^+$. Since $Q_{\omega,\nu_n}\rightarrow Q_\omega$ {in $X$} as $\nu_n\rightarrow0^+$,
a convergence argument yields that
\begin{equation}\label{cov-Xguji5}
\aligned
&\|\phi_{*}\|_H^2+B_1\left (Q_\omega^2,\phi_{*}^2\right )-B_2\left (Q_\omega^2,\phi_{*}^2\right )+2B_0(Q_\omega\phi_{*},Q_\omega\phi_{*})\\
\leq&\liminf\limits_{\nu_n\rightarrow0^+}\left(\|\phi_{\nu_n,*}\|_H^2+B_0(Q_{\omega,\nu_n}^2,\phi_{\nu_n,*}^2)
+2B_0(Q_{\omega,\nu_n}\phi_{\nu_n,*},Q_{\omega,\nu_n}\phi_{\nu_n,*})-p\nu_n\int_{\R^2}Q_{\omega,\nu_n}^{p-1}
\phi_{\nu_n,*}^2{\rm d}x\right)\\
=&\liminf\limits_{\nu_n\rightarrow0^+}\langle L_{+,\nu_n}\phi_{\nu_n,*},\phi_{\nu_n,*}\rangle=0.
\endaligned
\end{equation}
{Moreover,} it can be shown that $(\phi_{*},Q_\omega)_{L^2}=0$. In fact,
since $( L_{+,\nu_n}\phi_{\nu_n,*}, \varphi )_{L^2} =0$ for any $\varphi\in C_0^\infty(\R^2)$,
 a convergence argument gives that $(L_{+}\phi_{*},\varphi )_{L^2}=0$ for any $\varphi\in C_0^\infty(\R^2)$.
{Thus, $\phi_{*}\in \mbox{ker}\, L_+$ and there exist $c_1,c_2\in\R$ such that
$\phi_{*}=c_1\partial_{x_1}Q_\omega+c_2\partial_{x_2}Q_\omega$, which implies that $(\phi_{*},Q_\omega)_{L^2}=0$.
By
$\phi_{\nu_n,*}\in\mbox{span} \{ \partial _{x_1} Q_{\omega,\nu_n}, \partial _{x_2} Q_{\omega, \nu_n}\}^{\perp}$, (\ref{expo}) and $Q_{\omega,\nu_n}\rightarrow Q_\omega$
in $X$ and $\phi_{\nu_n,*}\rightharpoonup \phi_{*}$
in $ X$  as $\nu_n\rightarrow0^+$,
it is found  that $(\phi_{*},\partial_{x_i}Q_\omega)_{L^2}=0$ for $i=1,2$.  Hence,  (\ref{cov-Xguji5}) gives a contradiction since the nondegeneracy of
$L_+$ implies
$$
\langle L_+\phi_{*},\phi_{*}\rangle=\|\phi_{*}\|_H^2+B_1\left (Q_\omega^2,\phi_{*}^2\right )-B_2\left (Q_\omega^2,\phi_{*}^2\right )
+2B_0(Q_\omega\phi_{*},Q_\omega\phi_{*})>0.
$$
The proof is completed.}
\end{proof}

Finally, Propositions \ref{prop3.2} and \ref{prop3.6} imply
Theorem \ref{Th1}.

%%%%%%%%%%%%%%%%%%%%%%%%%%%%%%%%%%%%%%%%%%%%%%%%%%%%%%%%%%%%%%%%%%%%%%%%%%%

\section{Orbital stability}

In this section, the proof of
%%comprehensive theoretical
 orbital stability for the positive standing-wave solution $Q_{\omega,\nu}$ of
(\ref{eqn:2sn}) with $\nu\geq0$ small will be provided. The section will be divided into two subsections.

%%%%%%%%%%%%%%%%%%%%%%%%%%%%%%%%%%%%%%%%%%%%%%%%%%%%%%%%%%%%%%%%%%%%%%%%%%%%%%%%%%%%%%%%

\subsection{The linearized problem}

First, we give some properties of the positive solution $Q_{\omega,\nu}$ of \re{eqn:2sn}.
Let
$L_\nu$
be the linearization of equation \re{eqn:2sn} at $Q_{\omega,\nu}$  on $L^2(\R^2,\mathbb{C})$ with domain in
$\xi\in\tilde{X}(\R^2,\mathbb{C})$, where
$$
\tilde{X}(\R^2,\mathbb{C}):=\left \{\xi=\varphi+i\eta\, |\, \forall \varphi,\eta\in \tilde{X}\right \}.
$$
%with
%\begin{equation}\label{eqn:X+}
%\aligned
%\tilde{X}:=\bigg\{\vp\in X(\R^2,\R):\,\, &\text{there\, exists}\,f\in L^2(\R^2)\,\text{such\, that}\\
%& \text{for\, every}\, \psi\in C_c^\iy(\R^2),\, \int_{\R^2}\vp L_\nu\psi{\rm d}x=\int_{\R^2}f\psi{\rm d}x\bigg\}.
% \endaligned
%\end{equation}
More specifically, for any $\xi\in\tilde{X}(\R^2,\mathbb{C})$,
$$
L_\nu\xi=L_{+,\nu}\mathfrak{Re}(\xi)+iL_{-,\nu}\mathfrak{Im}(\xi),
$$
where $L_{+,\nu}$ was given in Theorem \ref{Th1}, and $L_{-,\nu}$ is defined by
\begin{eqnarray}
L_{-,\nu}\eta:=-\DD\eta+\omega\eta-\bigg(\ln\frac{1}{|x|}\ast Q_{\omega,\nu}^2\bigg)\eta-\nu Q_{\omega,\nu}^{p-1}\eta,\quad \eta\in\tilde{X},
\end{eqnarray}
respectively.
%where
%$$
%\aligned
%\tilde{X}:=\bigg\{\vp\in X(\R^3,\mathbb{R}):\,\, &\text{there\, exists}\,f\in L^2(\R^2)\,\text{such\, that}\\
%& \text{for\, every}\, \psi\in C_c^\iy(\R^2),\, \int_{\R^2}\vp L_+\psi{\rm d}x=\int_{\R^2}f\psi{\rm d}x\bigg\}.
% \endaligned
%$$
%We can write
%%
%\{\{
%{\color{red} [ZWN250611] Just wrote
%$$
%L_\nu\xi=L_{+,\nu}\mathfrak{Re}(\xi)+iL_{-,\nu}\mathfrak{Im}(\xi),
%$$
%but now you can rewrite a complex form in terms of a matrix?
%Does the reviewer agree?
%}
%\}\}
%%
%$$
%L_\nu=\begin{pmatrix} L_{+,\nu}& 0\\0&L_{-,\nu}\end{pmatrix}.
%$$
%%
%\footnote{[ZWN] The following is revised. Please check}
%%
\textcolor[rgb]{0.00,0.07,1.00}{By Theorem \ref{Th1} and the properties of $L_{+, \nu}$ studied in previous sections,
$L_{+,\nu}$ and $L_{-,\nu}$ are self-adjoint }and the solution $Q_{\omega,\nu}$ is non-degenerate
 in the sense that for small $\nu > 0$,
$$
\mbox{ker}\,  L_{+,\nu}={\rm span}\left\{\pl_{x_1}Q_{\omega,\nu},\pl_{x_2}Q_{\omega,\nu}\right\}.
$$
In addition, $Q_{\omega,\nu}\in C^\iy(\R^2)$, $Q_{\omega,\nu}(0)=\max_{x\in\R^2}Q_{\omega,\nu}(x)$.
By (\ref{expo}), it is obtained that $Q_{\omega,\nu}$ exponentially decays to zero at infinity \textcolor[rgb]{0.00,0.07,1.00}{uniformly with respect to}
%%
%\footnote{[ZWN] uniformly for or with respect to?}
%%
%for
$\nu > 0 $ small.

%\textbf{The spectrum of $L_{-,\nu}$.}
Since $Q_{\omega,\nu}$ is a solution of \re{eqn:2sn}, we have that  $L_{-,\nu}Q_{\omega,\nu}=0$, which implies that  $Q_{\omega,\nu}$ is an eigenfunction of the operator $L_{-,\nu}$
%we know that $Q_{\omega,\nu}$ is an eigenfunction of the operator
%$-\DD+\omega$
%in $\tilde{X} \cap L^2(\R^2,\big(\ln\frac{1}{|x|}\ast Q_{\omega,\nu}^2\big)\ud x)$
%associated with eigenvalue $1$.
associated with eigenvalue $0$.
Since $Q_{\omega,\nu}(x)>0,\,\, x\in\R^2$,
the eigenvalue $0$ is simple.
\begin{lemma}\label{L2}
For any fixed $\nu \geq 0$ small, there exists $\delta_0>0$ such that
$$
\lan L_{-,\nu} v,v\ran_{L^2}\geq \delta_0\|v\|_2^2
$$
for all $v\in \tilde{X}$ satisfying $(v, Q_{\omega,\nu})_{H^1}=0$.
\end{lemma}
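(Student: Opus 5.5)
The plan is to combine the spectral structure of $L_{-,\nu}$ with a compactness/contradiction argument. First I would record that $L_{-,\nu}$ is self-adjoint, bounded below and has compact resolvent. This follows exactly as in the second half of Lemma \ref{X-compact}: the potential $-\ln\frac{1}{|x|}\ast Q_{\omega,\nu}^2$ grows like $\|Q_{\omega,\nu}\|_2^2\ln|x|$ at infinity, and this growth gives compactness via Lemma \ref{Lem:compact0}(i). Hence the spectrum of $L_{-,\nu}$ is a discrete sequence $\mu_1<\mu_2\le\cdots\to+\infty$. Since $L_{-,\nu}Q_{\omega,\nu}=0$ and $Q_{\omega,\nu}>0$, the ground state is $Q_{\omega,\nu}$ itself. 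Indeed, the first eigenfunction of a Schrödinger operator with local potential is positive, and two positive eigenfunctions cannot be $L^2$-orthogonal. So $\mu_1=0$ is simple and $\mu_2>0$. Consequently, for $v\perp_{L^2}Q_{\omega,\nu}$ we get $\langle L_{-,\nu}v,v\rangle\ge\mu_2\|v\|_2^2$, and in general $\langle L_{-,\nu}v,v\rangle\ge0$ for all $v$.

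The statement uses the $H^1$-orthogonality $(v,Q_{\omega,\nu})_{H^1}=0$ rather than $L^2$-orthogonality, so the next step handles this. I would argue by contradiction: suppose there are $v_n\in\tilde X$ with $(v_n,Q_{\omega,\nu})_{H^1}=0$, $\|v_n\|_2=1$ and $\langle L_{-,\nu}v_n,v_n\rangle\to0$. Write $v_n=a_nQ_{\omega,\nu}+w_n$ with $w_n\perp_{L^2}Q_{\omega,\nu}$. Because $L_{-,\nu}Q_{\omega,\nu}=0$, we have $\langle L_{-,\nu}v_n,v_n\rangle=\langle L_{-,\nu}w_n,w_n\rangle\ge\mu_2\|w_n\|_2^2$, so $w_n\to0$ in $L^2$. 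The same quadratic form also controls $\|\nabla w_n\|_2^2$ and the weighted norm $\|w_n\|_w^2$, up to lower-order terms. To see this, split $\ln\frac1{|x|}\ast Q^2$ into the $B_1$ part, which is positive and grows, and the $B_2$ part, which is bounded by Hardy–Littlewood–Sobolev. Using Lemma \ref{Lem:cw1}, this gives $w_n\to0$ in $X$, in particular in $H^1$. Then $|a_n|\to\|Q_{\omega,\nu}\|_2^{-1}$ since $\|v_n\|_2=1$. But
$$0=(v_n,Q_{\omega,\nu})_{H^1}=a_n\|Q_{\omega,\nu}\|_{H^1}^2+(w_n,Q_{\omega,\nu})_{H^1},$$
and the last term tends to $0$, which forces $a_n\to0$, a contradiction. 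The contradiction yields $\delta_0>0$.

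Equivalently, one can argue directly without contradiction. $H^1$-orthogonality forces $|a|\,\|Q\|_{H^1}^2=|(w,Q)_{H^1}|=|(w,(-\Delta+\omega)Q)_{L^2}|\le C\|w\|_2$, because $(-\Delta+\omega)Q\in L^2$ by elliptic regularity. So $\|v\|_2\le C'\|w\|_2$, and $\langle L_{-,\nu}v,v\rangle\ge\mu_2\|w\|_2^2\ge(\mu_2/C'^2)\|v\|_2^2$. This direct version avoids the $X$-convergence step entirely, and I would present it as the main proof.

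The main obstacle I expect is rigorously justifying the discreteness of the spectrum and the simplicity of the eigenvalue $0$ for $L_{-,\nu}$ in the non-translation-invariant space $\tilde X$ with the sign-changing logarithmic kernel. In particular, this requires the ground-state/positivity argument for a potential that is unbounded above. I would handle it by the same compact-resolvent $T_K$ construction as in Lemma \ref{X-compact}, together with the standard Perron–Frobenius argument: $|u|$ is also a minimizer of the Rayleigh quotient, then the strong maximum principle applies.
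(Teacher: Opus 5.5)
Your proof is correct. Its spectral half matches what the paper establishes, citing Berezin--Shubin: discreteness comes from the confining potential $\omega-\ln\frac{1}{|x|}\ast Q_{\omega,\nu}^2-\nu Q_{\omega,\nu}^{p-1}\to+\infty$, and $0$ is the lowest, simple eigenvalue with eigenfunction $Q_{\omega,\nu}>0$.

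The routes diverge in how the $H^1$-orthogonality is used. The paper never invokes the second eigenvalue. It argues by contradiction:
\begin{itemize}
\item it takes $v_n$ with $\|v_n\|_2=1$, $(v_n,Q_{\omega,\nu})_{H^1}=0$ and $\langle L_{-,\nu}v_n,v_n\rangle\to0$;
\item it gets boundedness in $\tilde X$ from Hardy--Littlewood--Sobolev and Lemma \ref{Lem:cw1};
\item it upgrades weak to strong convergence by showing that $\|v_n\|_{H^1}^2+B_1(Q_{\omega,\nu}^2,v_n^2)$ converges to the same quantity for the limit $v_0$;
\item a Lagrange-multiplier argument then puts $v_0$ in $\ker L_{-,\nu}$, so $v_0=hQ_{\omega,\nu}$, which is incompatible with $(v_0,Q_{\omega,\nu})_{H^1}=0$ and $\|v_0\|_2=1$.
\end{itemize}

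Your direct version replaces the compactness and the constrained minimization with the gap $\mu_2>0$ and the bound $|a|\,\|Q_{\omega,\nu}\|_{H^1}^2\le\|(-\Delta+\omega)Q_{\omega,\nu}\|_2\|w\|_2$. That bound is valid because $(-\Delta+\omega)Q_{\omega,\nu}=(\ln\frac{1}{|x|}\ast Q_{\omega,\nu}^2)Q_{\omega,\nu}+\nu Q_{\omega,\nu}^{p}\in L^2$, using the exponential decay \eqref{expo}. Your route gives an explicit constant,
$\delta_0=\mu_2\bigl(1+\|Q_{\omega,\nu}\|_2^2\|(-\Delta+\omega)Q_{\omega,\nu}\|_2^2\|Q_{\omega,\nu}\|_{H^1}^{-4}\bigr)^{-1}$.
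It also makes clear that any constraint functional which is $L^2$-bounded and nonzero on $Q_{\omega,\nu}$ would work.

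The paper's compactness route costs more, but it is of the same type as the arguments used for the $L_{+,\nu}$ estimates. For Lemma \ref{L2-1} itself, the form-control step of your contradiction variant is exactly what turns your $L^2$ bound into the $X$-norm bound. That step is $B_2(Q_{\omega,\nu}^2,w^2)\le C\|w\|_2^2$ together with Lemma \ref{Lem:cw1}, and it is what the paper's proof of Lemma \ref{L2-1} does with Lemma \ref{L2} as input.
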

\Proof
By \cite[Proposition 2.3]{Cingolani16} and (\ref{expo}), it is straightforward to see that
$$
\omega-\ln\frac{1}{|x|}\ast Q_{\omega,\nu}^2-\nu|Q_{\omega,\nu}|^{p-1}\rightarrow+\infty,\quad\quad |x|\rightarrow+\infty.
$$
Then,  \cite[Theorem 3.1]{Berezin91} gives that $(L_{-,\nu}, v , v)_{L^2}$ is bounded from below.
By a similar argument to
%%
%\footnote{[ZWN260604] to. Note that `similar to' but `the same as'. We need to check the whole paper for `to'}
%%
that in Lemma \ref{X-compact}, it is obtained that $L_{-,\nu}$ has only a discrete spectrum with eigenvalues of finite multiplicity approaching to $+\infty$.
Moreover, $L_{-,\nu}Q_{\omega,\nu}=0$ with $Q_{\omega,\nu}>0$, and $0$ is the smallest eigenvalue of $L_{-,\nu}$ (see \cite[Theorem 3.4]{Berezin91}) and  simple with $Q_{\omega,\nu}>0$ as the corresponding eigenfunction. Thus,
$$
\mbox{ker}\,  L_{-,\nu}={\rm span}\{Q_{\omega,\nu}\},
$$
and $\lan L_{-,\nu}v,v\ran\ge0$ for any $v\in \tilde{X}$. Now, we prove the lemma by contradiction. Assume that there exists a sequence $\{v_n\}\subset \tilde{X}$ with $\|v_n\|_2^2=1$ such that $(v_n, Q_{\omega,\nu})_{H^1}=0$ and
$\lan L_{-,\nu} v_n,v_n\ran_{L^2}\rightarrow0$ as $n\rightarrow\infty$, that is,
\begin{equation}\label{eqn:cov1}
\|v_n\|_{H^1}^2+{\int_{\R^2}\left(\ln|x|\ast Q_{\omega,\nu}^2\right)v_n^2{\rm d}x
-\nu\int_{\R^2}
Q_{\omega,\nu}^{p-1}v_n^{2}{\rm d}x}\rightarrow0,\quad \mbox{as} \quad n\rightarrow\infty.
\end{equation}
Since $\{Q_{\omega,\nu}\}_{\nu\in[0,1]}$ is bounded in $X$, it follows
from the Hardy-Littleword-Sobolev inequality and the Gagliardo-Nirenberg inequality that
\begin{equation}\label{eqn:cov2}
\aligned
\|v_n\|_{H^1}^2 & +\int_{\R^4}\ln(1+|x-y|)Q_{\omega,\nu}^2(y)v^2_n(x){\rm d}x{\rm d}y\\
=&\int_{\R^4}\ln\left(1+\frac{1}{|x-y|}\right)Q_{\omega,\nu}^2(y)v^2_n(x){\rm d}x{\rm d}y+{\nu\int_{\R^2}
Q_{\omega,\nu}^{p-1}v_n^{2}{\rm d}x}+o_n(1)\\
\leq& C\|Q_{\omega,\nu}\|_{\frac{8}{3}}^2\|v_n\|_{\frac{8}{3}}^2+{\nu\|Q_{\omega,\nu}\|_{\infty}^{p-1}\|v_n\|_2^{2}}+o_n(1)\\
\leq& C\|Q_{\omega,\nu}\|_{\frac{8}{3}}^2\|\nabla v_n\|^{\frac12}_2\|v_n\|_2^{\frac32}+C\|v_n\|_2^{2}+o_n(1),
\endaligned
\end{equation}
which implies that $\{v_n\}$ is bounded {in $H^1(\R^2)$,} and
$B_1\left (Q_{\omega,\nu}^2,v_n^2\right )<+\infty$ uniformly for $n$. Hence,
Lemma \ref{Lem:cw1} yields that $\{v_n\}$ is bounded in $\tilde{X}$, which gives that there exists $v_0\in\tilde{X}$ such that,
up to a subsequence, $v_n\rightharpoonup v_0$ in $\tilde{X}$ and $(v_0, Q_{\omega,\nu})_{H^1}=0$.
From (\ref{eqn:cov1})-(\ref{eqn:cov2}),
$\lan L_{-,\nu} v_n,v_n\ran\geq0$ and the compact embedding in $L^q$ for functions in $X$ with $q \geq 2$ (see Lemma \ref{Lem:compact0}), it is found that
$$
\aligned
0&\leq \lan L_{-,\nu} v_0,v_0\ran_{L^2}
\leq \liminf\limits_{n\rightarrow\infty}\left(\|v_n\|_{H^1}^2+B_1\left (Q_{\omega,\nu}^2,v_n^2\right )
-B_2\left (Q_{\omega,\nu}^2,v_n^2\right )-{\nu\int_{\R^2}
Q_{\omega,\nu}^{p-1}v_n^{2}{\rm d}x}\right)\\
&=\lim\limits_{n\rightarrow\infty}\lan L_{-,\nu} v_n,v_n\ran_{L^2}=0,
\endaligned
$$
which implies that $\lan L_{-,\nu} v_0,v_0\ran_{L^2}=0$. Thus, Fatou's lemma yields that
$$
\aligned
&\|v_0\|^2_{H^1}+B_1\left (Q_{\omega,\nu}^2,v_0^2\right )
\leq \liminf\limits_{n\rightarrow\infty}\left(\|v_n\|_H^2+B_1\left (Q_{\omega,\nu}^2,v_n^2\right )\right)\leq \limsup\limits_{n\rightarrow\infty}\left(\|v_n\|_H^2+B_1\left (Q_{\omega,\nu}^2,v_n^2\right )\right)\\
&=\lim\limits_{n\rightarrow\infty}\left(\lan L_{-,\nu} v_n,v_n\ran+B_2\left (Q_{\omega,\nu}^2,v_n^2\right )
+\nu{\int_{\R^2}
Q_{\omega,\nu}^{p-1}v_n^{2}{\rm d}x}\right)\\
&=\lan L_{-,\nu}v_0,v_0\ran+B_2\left (Q_{\omega,\nu}^2,v_0^2\right )
+\nu{\int_{\R^2}
Q_{\omega,\nu}^{p-1}v_n^{2}{\rm d}x}\\
&=\|v_0\|^2_{H^1}+B_1\left (Q_{\omega,\nu}^2,v_0^2\right )\quad \mbox{or} \quad \lim \limits_{n\rightarrow\infty}\left(\|v_n\|_{H^1}^2+B_1\left (Q_{\omega,\nu}^2,v_n^2\right)\right)  = \|v_0\|^2_{H^1}+B_1\left (Q_{\omega,\nu}^2,v_0^2\right)\, ,
\endaligned
$$
which, together with the weak convergence of {$v_n$ in $\tilde{X}$ to $v_0$}, gives strong convergence of $v_n $ to $v_0$
under the norm $\left (  \| \cdot \|^2_{H^1} + B_1 (Q^2_{\omega, \nu}, \cdot^2 )\right ) ^{1/2}$,
i.e., $\lim_{n\rightarrow \infty} \left ( \| v_n - v_0  \|^2_{H^1} + B_1 \left (Q^2_{\omega, \nu}, (v_n - v_0)^2 \right ) \right ) = 0 $.
Hence, Lemma \ref{Lem:cw1} yields that $v_n\rightarrow v_0$ in $\tilde{X}$ with
$\lan L_{-,\nu} v_0,v_0\ran_{L^2}=0$ and $\|v_0\|_2^2=1$. Note that
$\lan L_{-,\nu} u ,u\ran_{L^2}\geq 0 $ for {$u\in\tilde{X}$ }and $v_0$ attains the minimum
with $ \|v_0\|_2 = 1 $ and $(v_0, Q_{\omega, \nu} )_{H^1} = 0$. Thus, there exist
two Lagrange multipliers $\alpha_1,\alpha_2\in\R$ such that $v_0$ is a critical point of
$$
\lan L_{-,\nu} u,u\ran_{L^2}=\alpha_1\| u\|^2_2 +\alpha_2(Q_{\omega,\nu},u)_{H^1},\, {u\in\tilde{X}},
$$
or $v_0$ satisfies that
for any $\varphi\in \tilde{X}$,
$$
\lan L_{-,\nu} v_0,\varphi\ran_{L^2}=\alpha_1(v_0,\varphi)_{L^2}+ (\alpha_2/2) (Q_{\omega,\nu},\varphi)_{H^1}.
$$
If $\varphi=v_0$, it is found that $\alpha_1=0$. Then,   $\varphi=Q_{\omega,\nu}$ gives that, by $L_{-,\nu}Q_{\omega,\nu}=0$,
$\lan L_{-,\nu}Q_{\omega,\nu},v_0\ran_{L^2}=\lan L_{-,\nu} v_0,Q_{\omega,\nu}\ran_{L^2}=(\alpha_2/2)\|Q_{\omega,\nu}\|_2^2 =0$. $(L_{-,\nu} v_0, \varphi )_{L^2}=0$ or $v_0$ is a weak solution of $L_{-,\nu} v_0 = 0$. The properties of $v_0$ proved above give that $v_0\in \mbox{ker}\,  L_{-,\nu}$ and the uniqueness for the kernel of $L_{-,\nu}$ implies that for some  $h\in\R\setminus\{0\}$, after possible translation, $v_0=h Q_{\omega,\nu}$, which contradicts
with $(v_0, Q_{\omega,\nu})_{H^1}=0$. This completes the proof.
\qed

Next lemma provides a similar conclusion with $v$ under the norm in $X$ instead of $ L^2$.
\begin{lemma}\label{L2-1}
For any fixed $\nu \geq 0 $ small, there exists $\delta_1>0$ such that
$
\lan L_{-,\nu}v,v\ran_{L^2}\geq \delta_1\|v\|_X^2
$
for all $v\in \tilde{X}$ satisfying $(v,Q_{\omega,\nu})_{H^1}=0$.
\end{lemma}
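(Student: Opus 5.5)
The plan is to upgrade the $L^2$-coercivity of Lemma \ref{L2} to $X$-coercivity. I will use a convex-combination (interpolation) argument, which exploits that the quadratic form of $L_{-,\nu}$ controls the $H^1$-norm and the weighted part up to a lower-order $L^2$ term.

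First I write, for $v\in\tilde{X}$,
\begin{equation*}
\lan L_{-,\nu}v,v\ran_{L^2}=\|v\|_{H^1}^2+B_1\left(Q_{\omega,\nu}^2,v^2\right)-B_2\left(Q_{\omega,\nu}^2,v^2\right)-\nu\int_{\R^2}Q_{\omega,\nu}^{p-1}v^2{\rm d}x .
\end{equation*}
The negative terms are estimated as in \eqref{eqn:cov2}. By \eqref{eqn:es2}, the Gagliardo--Nirenberg inequality and the uniform bound on $Q_{\omega,\nu}$ in $X$ and $L^\infty$ (from \eqref{expo} and the $W^{2,p}$ bounds), we get
\begin{equation*}
B_2\left(Q_{\omega,\nu}^2,v^2\right)+\nu\int_{\R^2}Q_{\omega,\nu}^{p-1}v^2{\rm d}x\le C\|\nabla v\|_2^{1/2}\|v\|_2^{3/2}+C\|v\|_2^2\le \tfrac12\|v\|_{H^1}^2+C_0\|v\|_2^2 ,
\end{equation*}
where the last step uses Young's inequality. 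Hence
\begin{equation*}
\lan L_{-,\nu}v,v\ran_{L^2}\ge \tfrac12\|v\|_{H^1}^2+B_1\left(Q_{\omega,\nu}^2,v^2\right)-C_0\|v\|_2^2 .
\end{equation*}

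Next I need to bound $\|v\|_w^2$ from below by $B_1(Q_{\omega,\nu}^2,v^2)$ plus an $L^2$ error. Since $\ln(1+|x|)\le \ln(1+|x-y|)+\ln(1+|y|)$, integrating against $Q_{\omega,\nu}^2(y)\,{\rm d}y$ gives
\begin{equation*}
\|Q_{\omega,\nu}\|_2^2\,\|v\|_w^2\le B_1\left(Q_{\omega,\nu}^2,v^2\right)+\|Q_{\omega,\nu}\|_w^2\,\|v\|_2^2 .
\end{equation*}
Because $\|Q_{\omega,\nu}\|_2$ is bounded below (it converges to $\|Q_\omega\|_2>0$) and $\|Q_{\omega,\nu}\|_w$ is bounded above, uniformly for small $\nu$, this yields $B_1(Q_{\omega,\nu}^2,v^2)\ge c\|v\|_w^2-C\|v\|_2^2$. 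Combining the two steps,
\begin{equation*}
\lan L_{-,\nu}v,v\ran_{L^2}\ge c_1\|v\|_X^2-C_1\|v\|_2^2 \quad\text{for all } v\in\tilde{X}.
\end{equation*}

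Finally, for $v$ with $(v,Q_{\omega,\nu})_{H^1}=0$, I split $\lan L_{-,\nu}v,v\ran=\theta\lan L_{-,\nu}v,v\ran+(1-\theta)\lan L_{-,\nu}v,v\ran$. I apply the last inequality to the first part and Lemma \ref{L2} to the second, which gives
\begin{equation*}
\lan L_{-,\nu}v,v\ran\ge \theta c_1\|v\|_X^2+\big((1-\theta)\delta_0-\theta C_1\big)\|v\|_2^2 .
\end{equation*}
Choosing $\theta=\delta_0/(\delta_0+C_1)$ makes the $L^2$ coefficient zero and yields the claim with $\delta_1=\theta c_1$.

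The main point to check is the lower bound for $B_1$ in terms of $\|v\|_w^2$, i.e.\ a quantitative version of Lemma \ref{Lem:cw1}; the elementary triangle-inequality argument above should suffice. A contradiction argument based on Lemma \ref{Lem:cw1} would be a fallback if constants need care.
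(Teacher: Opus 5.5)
Your proof is correct. It uses the same ingredients as the paper, but you argue directly and quantitatively where the paper argues by contradiction.

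The paper's argument runs as follows. It takes $v_n$ with $(v_n,Q_{\omega,\nu})_{H^1}=0$, $\|v_n\|_X=1$ and $\langle L_{-,\nu}v_n,v_n\rangle_{L^2}\to0$. Lemma~\ref{L2} then gives $\|v_n\|_2\to0$. This kills $B_2(Q_{\omega,\nu}^2,v_n^2)$ and $\nu\int Q_{\omega,\nu}^{p-1}v_n^2\,{\rm d}x$, using $\ln(1+\frac{1}{|x|})\ast Q_{\omega,\nu}^2\in L^\infty$ and $Q_{\omega,\nu}\in L^\infty$. Finally, Lemma~\ref{Lem:cw1} turns $\|v_n\|_{H^1}^2+B_1(Q_{\omega,\nu}^2,v_n^2)\to0$ into $\|v_n\|_w\to0$, a contradiction.

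Your version replaces each qualitative step with an explicit estimate:
\begin{itemize}
\item Your bound $\langle L_{-,\nu}v,v\rangle_{L^2}\ge c_1\|v\|_X^2-C_1\|v\|_2^2$ holds on all of $\tilde X$ and replaces the sequential step.
\item Your inequality $\|Q_{\omega,\nu}\|_2^2\|v\|_w^2\le B_1(Q_{\omega,\nu}^2,v^2)+\|Q_{\omega,\nu}\|_w^2\|v\|_2^2$ is exactly the quantitative content of Lemma~\ref{Lem:cw1} for the fixed function $u_n\equiv Q_{\omega,\nu}$. So the general sequential lemma is not needed here.
\item The convex combination with Lemma~\ref{L2} replaces the contradiction.
\end{itemize}

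What your route buys is the explicit constant $\delta_1=\delta_0c_1/(\delta_0+C_1)$. Here $c_1$ and $C_1$ depend only on $\|Q_{\omega,\nu}\|_2$, $\|Q_{\omega,\nu}\|_w$, $\|Q_{\omega,\nu}\|_{8/3}$ and $\|Q_{\omega,\nu}\|_\infty$, which are uniformly controlled for small $\nu$. Hence any non-uniformity in $\nu$ is isolated in $\delta_0$. This is the kind of information relevant when Lemma~\ref{lem-xiajie} later asserts constants independent of $\nu$; the contradiction argument gives no such information. The paper's route is shorter to write once Lemma~\ref{Lem:cw1} is available.
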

\Proof
Note that $\lan L_{-,\nu} v,v\ran_{L^2} \ge0$ for any $v\in \tilde{X}$. Let
$$
\varsigma:=\inf\limits_{ (v,Q_{\omega,\nu})_{H}=0,\|v\|_X=1}\lan L_{-,\nu} v,v\ran_{L^2} \geq 0\, .
$$
Hence, we need to show  $\varsigma > 0$.
If otherwise, i.e., $\varsigma=0$, then there exists sequence $\{v_n\}\subset \tilde{X}$
%%%
%\footnote{[ZWN241022] where can we find a definition of te phrase ``minimizing sequence''???
%}
%%%
such that $\|v_n\|_X=1$ and $\lan L_{-,\nu}v_n,v_n\ran\rg0$ as $n\rg\iy$. So
\begin{equation}\lab{cov1-}
\aligned
\|\na v_n\|_2^2+\omega\|v_n\|_2^2+B_1\left  (Q_{\omega,\nu}^2,v_n^2\right)=B_2\left (Q_{\omega,\nu}^2,v_n^2\right )+\nu {\int_{\R^2}
Q_{\omega,\nu}^{p-1}v_n^{2}{\rm d}x}+o_n(1).
\endaligned
\end{equation}
Then, Lemma \ref{L2} yields $v_n\rightarrow 0$ in $L^2(\R^2)$,
and the fact that $\left(\ln\left(1+\frac{1}{|x|}\right)\ast Q_{\omega,\nu}^2\right)\in L^\infty(\R^2)$ implies
\begin{align}\lab{cov1+}
\left |B_2\left (Q_{\omega,\nu}^2,v_n^2\right )\right |\leq C\|v_n\|_2^2\rightarrow0\quad\text{as}\,\,n\rightarrow\infty\, .
\end{align}
{Moreover,   since $Q_{\omega,\nu}\in L^\infty(\R^2)$, we also have  $\int_{\R^2}
Q_{\omega,\nu}^{p-1}v_n^{2}{\rm d}x\rightarrow0$} as $n\rightarrow\infty$.
Hence, (\ref{cov1-}) and (\ref{cov1+}) imply that
$$
\|\na v_n\|_2^2+\omega\|v_n\|_2^2+B_1\left (Q_{\omega,\nu}^2,v_n^2\right )\rightarrow0\quad\text{as}\,\,n\rightarrow\infty.
$$
By Lemma \ref{Lem:cw1}, it is deduced that $v_n\rightarrow 0$ in $X$ as $n\rightarrow\infty$, which is a contradiction to $\|v_n\|_X=1$. The proof is completed.
\qed

%%%%%%%%%%%%%%%%%%%%%%%%%%%%%%%%%%%%%%%%%%%%%%%%%%%%%

Now, we study the operator $L_{+,\nu}$ and its spectrum. First, let us define some subspaces of $\tilde X$.
%%In what follows, we investigate the spectrum of $L_{+,\nu}$.
Let
$$
\mathcal{V}_\nu:=\left\{u\in \tilde{X}:(u,Q_{\omega,\nu})_{L^2}=0\right\}, \quad\mathcal{V}:=\left\{u\in \tilde{X}:(u,Q_{\omega})_{L^2}=0\right\}.
$$
%As a direct consequence of conclusion (ii) of Proposition \ref{ground} and Lemma 2.4 in \cite{Marco1}, we have immediately
%\bl\label{zhengxiajie}
%$
%\inf_{u\in\mathcal{V}}\lan L_+u,u\ran=0.
%$
%\el

By similar notations used in \cite{Marco1}, define four functions
$$
\aligned
\Theta_{j,\nu}(Q_{\omega,\nu}):=&\pa_{x_j}\left((\ln|x|\ast Q_{\omega,\nu}^2)Q_{\omega,\nu}\right)-\pa_{x_j}(\nu Q_{\omega,\nu}^p)\\
=&(\ln|x|\ast Q_{\omega,\nu}^2)\pa_{x_j}Q_{\omega,\nu}+Q_{\omega,\nu}\pa_{x_j}(\ln|x|\ast Q_{\omega,\nu}^2)-\nu\pa_{x_j}( Q_{\omega,\nu}^p)\\
\Theta_j(Q_\omega):=&\pa_{x_j}((\ln|x|\ast Q_\omega^2)Q_\omega)
=(\ln|x|\ast Q_\omega^2)\pa_{x_j}Q_\omega+Q_\omega\pa_{x_j}(\ln|x|\ast Q_\omega^2)
\endaligned
$$
for $j = 1,2 $
and two subspaces
$$
\aligned
&\mathcal{W}_\nu:=\left\{u\in \tilde{X}:(u,Q_{\omega,\nu})_{L^2}=(u,\Theta_j(Q_{\omega,\nu}))_{L^2}=0, \  j =1,2\right\}\\
&\mathcal{W}:=\left\{u\in \tilde{X}:(u,Q_\omega)_{L^2}=(u,\Theta_j(Q_\omega))_{L^2}=0\, , \  j = 1,2\right\},
\endaligned
$$
where $Q_\omega$ is the unique radially symmetric positive solution of equation \re{eqn:2sn} with $\nu=0$. Then, the following lemma holds.
\bl\label{X-jiajie}
$
\inf_{u\in\mathcal{W}}\frac{\lan L_+u,u\ran_{L^2}}{\|u\|_{X}^2}>0,
$
 where operator $L_+$ is defined in (\ref{eqn:L+}).
\el

\bp
We first give an inequality that will be proved later, i.e.,
\be\lab{positive}
\inf_{u\in\mathcal{W}}\frac{\lan L_+u,u\ran}{\|u\|_{L^2_w}^2}>0,
\ee
where
$$
\|u\|_{L^2_w}^2:=(u,u)_{L^2_w}:=\int_{\R^2}[1+\ln(1+|x|)]|u|^2{\rm d}x.
$$
If \re{positive} holds, then the lemma can be proved as follows.
If otherwise, then there exists a sequence $\{u_n\}\subset\mathcal{W}$ such that
$\|u_n\|_X=1$
and $\lan L_+u_n,u_n\ran_{L^2}\rg0$ as $n\rg\iy$.
Then, \re{positive} implies that $\|u_n\|_{L^2_w}\rightarrow0$ and
$u_n\rightharpoonup 0$ in $\tilde{X}$ as $n\rightarrow\infty$.
By Lemma \ref{Lem:compact0}(i), (\ref{eqn:es1}) and (\ref{eqn:es2}),
it is found that, up to a subsequence, as $n\rightarrow\infty$,
\begin{equation}\label{eqn:L+1}
\aligned
\bigg|\int_{\R^2}\bigg(\ln|x|\ast Q_\omega^2\bigg)u_n^2{\rm d}x\bigg|&\leq \left |B_1\left (Q_\omega^2,u_n^2\right )\right |+\left |B_2\left (Q_\omega^2,u_n^2\right )\right |\\
&{\leq\|Q_\omega\|^2_2\|u_n\|_w ^2+\|u_n\|^2_2\|Q_\omega\|_w ^2
+C\|Q_\omega\|^2_{\frac{8}{3}}\|u_n\|^2_{\frac{8}{3}}\rightarrow0,}\\
\bigg|\int_{\R^2}\bigg(\ln|x|\ast Q_\omega u_n\bigg)Q_\omega u_n{\rm d}x\bigg|&\leq |B_1(Q_\omega u_n,Q_\omega u_n)|+|B_2(Q_\omega u_n,Q_\omega u_n)|\\
&{\leq2\|Q_\omega\|_w \|Q_\omega\|_2\|u_n\|_w \|u_n\|_2+C\|Q_\omega\|^2_{\frac{8}{3}}
\|u_n\|^2_{\frac{8}{3}}\rightarrow 0,}
\endaligned
\end{equation}
where $Q_\omega\in \tilde{X}$ is used.
From the definition of operator $L_+$, it is obtained that as $n\rightarrow \infty$,
\begin{align*}
\lan L_+u_n,u_n\ran_{L^2}&=\|\na u_n\|_2^2+\omega\|u_n\|_2^2+\int_{\R^2}\bigg(\ln|x|\ast Q_\omega^2\bigg)u_n^2{\rm d}x
+2\int_{\R^2}\bigg(\ln|x|\ast Q_\omega u_n\bigg)Q_\omega u_n{\rm d}x\\
&=\|\na u_n\|_2^2+o_n(1) \rightarrow 0,
\end{align*}
which, together with the fact that $\|u_n\|_{L^2_w}\rightarrow0$, yields that $u_n\rightarrow0$ in $\tilde{X}$, which  contradicts the fact that $\|u_n\|_X=1$. Hence, the lemma is proved.

Next, we prove \re{positive} holds.   Note that Lemma \ref{X-compact} implies that
 $\inf_{\varphi\in\mathcal{W}}\lan L_+\varphi,\varphi\ran\ge0$ because $\mathcal{W}\subset\mathcal{V}$. Thus, if \re{positive} fails,
there exists a minimizing sequence $\{u_n\}\subset \mathcal{W}$ with $\|u_n\|_{L^2_w}=1$ such that $\lan L_+u_n,u_n\ran_{L^2}\rg0$ as $n\rg\iy$. Hence,
\begin{equation}\lab{cov1--}
\aligned
\|\na u_n\|_2^2+\omega\|u_n\|_2^2& +B_1\left (Q_\omega^2,u_n^2\right )+2B_1(Q_\omega u_n,Q_\omega u_n) \\
& =B_2\left (Q_\omega^2,u_n^2\right )+2B_2(Q_\omega u_n,Q_\omega u_n)+o_n(1).
\endaligned
\end{equation}
 (\ref{eqn:es1}) gives $|B_1(Q_\omega u_n,Q_\omega u_n)|\leq {2\|Q_\omega\|_w \|Q_\omega\|_2\|u_n\|_w \|u_n\|_2}$.
Moreover,
 (\ref{eqn:es2}) and the Gagliardo-Nirenberg inequality yield
\begin{equation}\lab{cov1}
\aligned
&|B_2(Q_\omega u_n,Q_\omega u_n)|\leq C\|Q_\omega u_n\|^2_{4/3}\leq C\|Q_\omega\|_{8/3}^2\|u_n\|_{8/3}^2
\leq C\|\nabla u_n\|_{2}^{\frac12}\|u_n\|_{2}^{\frac32},\\
&\left |B_2\left (Q_\omega^2,u_n^2\right )\right |\leq C\|Q_\omega^2\|_{4/3}\|u_n^2\|_{4/3}\leq C\|Q_\omega\|_{8/3}^2\|u_n\|_{8/3}^2\leq C\|\nabla u_n\|_{2}^{\frac12}\|u_n\|_{2}^{\frac32}.
\endaligned
\end{equation}
By \re{cov1--} with \re{cov1}, it is obtained that $\{u_n\}$ is bounded in $X$ and, up to a subsequence, there exists $u\in \tilde{X}$ such that $u_n\rightharpoonup u$ weakly in $\tilde{X}$ as $n\rg\iy$.
Similar to (\ref{cov-Xguji1}) and (\ref{cov-Xguji2}), it follows from (\ref{eqn:es1})-(\ref{eqn:es2}), Lemma \ref{Lem:cw2} and Lemma \ref{Lem:compact0}(i) that, up to a subsequence again,
\begin{equation}\label{cov-Xguji1-}
\aligned
  &\lim_{n\rightarrow\infty}B_2\left (Q^2_{\omega},u_n^2\right )=B_2\left (Q^2_{\omega},u^2\right)\, ,\\
  &\lim_{n\rightarrow\infty}B_2(Q_{\omega}u_n,Q_{\omega}u_n)=B_2(Q_{\omega}u,Q_{\omega}u)
\endaligned
\end{equation}
and
\begin{equation}\label{cov-Xguji2-}
\aligned
 &\lim_{n\rightarrow\infty}\left|B_1(Q_{\omega}u_n,Q_{\omega}u_n)
  -B_1(Q_{\omega}u,Q_{\omega}u)\right|\\
  &=\lim_{n\rightarrow\infty}\left|B_1(Q_{\omega}u_n-Q_{\omega}u,Q_{\omega}u_n)
  +B_1(Q_{\omega}u,Q_{\omega}u_n-Q_{\omega}u)\right|=0.
\endaligned
\end{equation}
Then, by (\ref{cov1--}), Lemma \ref{Lem:cw1} and a similar argument used in the proof of Lemma \ref{L2}, it is found that $u_n\rightarrow u$ in $\tilde{X}$.
Hence, $u\in\mathcal{W}$ is a minimizer of $\lan L_+ v,v \ran_{L^2}$ under the constraints $v\in {\mathcal W} $ and $\| v\|_{L^2_w} = 1$, and
there exist Lagrange multipliers $\la,\mu,\la_{1},\la_{2}$ such that
\be\lab{lagr}
\lan L_+u,\eta\ran_{L^2} =\la(u,\eta)_{L^2_w}+\mu(Q_\omega,\eta)_{L^2}+\sum_{i=1}^2\la_{i}\int_{\R^2}\Theta_i(Q_\omega)\eta {\rm d}x~~~
\forall \eta\in \tilde{X}.
\ee
{Taking $\eta=u$} in (\ref{lagr}) implies that $\la=0$ because of $u\in\mathcal{W}$.
For $j=1,2$,  the fact that $\mbox{ker}\, L_+ = {\rm span}\{\pl_{x_1}Q_\omega,\pl_{x_2}Q_\omega\}$ yields
$$
\lan L_+u,\pl_{x_j}Q_\omega\ran=\lan L_+\pl_{x_j}Q_\omega,u\ran=0.
$$
Hence,  by choosing $\eta=\pl_{x_j}Q_\omega$ in (\ref{lagr}) with $j=1,2$, it is obtained that
\begin{equation}\label{eqn:L+2}
\aligned
0&=\sum_{i=1}^2\la_{i}\int_{\R^2}\Theta_i(Q_\omega)\pl_{x_j}Q_\omega {\rm d}x
=\la_{j}\int_{\R^2}\Theta_j(Q_\omega)\pl_{x_j}Q_\omega {\rm d}x,
\endaligned
\end{equation}
where we have used the fact that
$(Q_\omega,\pl_{x_j}Q_\omega)=0,\,\,j=1,2,$
and for $i\not=j$,
\begin{equation}\label{eqn:L+3-}
\aligned
\int_{\R^2}\Theta_i(Q_\omega)\pl_{x_j}Q_\omega {\rm d}x&
=\int_{\R^2}\pl_{x_i}\left (\left (\ln|x|\ast Q_\omega^2\right )Q_\omega\right )\pl_{x_j}Q_\omega {\rm d}x\\
&=-\int_{\R^2}\pl_{x_i}(-\DD Q_\omega+\omega Q_\omega)\pl_{x_j}Q_\omega {\rm d}x\\
&=-\left ( \int_{\R^2}\na \pl_{x_i}Q_\omega\na \pl_{x_j}Q_\omega {\rm d}x+\int_{\R^2}\omega\pl_{x_i}Q_\omega\pl_{x_j}Q_\omega {\rm d}x\right ) =0\, ,
\endaligned
\end{equation}
where the fact that the partial derivative of a radially symmetric function in $\mathbb{R}^2$ is odd in that variable has been used.
%where we have the fact that $Q_\omega$ is symmetric radially.
 Moreover,
\begin{equation}\label{eqn:L+3}
\aligned
\int_{\R^2}\Theta_j(Q_\omega)\pl_{x_j}Q_\omega {\rm d}x&=\int_{\R^2}\pl_{x_j}\left (\left (\ln|x|\ast Q_\omega^2\right )Q_\omega\right )\pl_{x_j}Q_\omega {\rm d}x\\
&=-\int_{\R^2}\pl_{x_j}(-\DD Q_\omega+\omega Q_\omega)\pl_{x_j}Q_\omega {\rm d}x=-\|\pl_{x_j} Q_\omega\|^2_{H^1}.
\endaligned
\end{equation}
Thus, (\ref{eqn:L+2}) and (\ref{eqn:L+3}) imply that $\la_{j}=0$ for  $j=1,2$
and \re{lagr} is changed into
\begin{equation}\label{eqn:L+6}
\lan L_+u,\eta\ran_{L^2}=\mu(Q_\omega,\eta)_{L^2}~~~
\forall \eta\in \tilde{X}.
\end{equation}
If  $\mu=0$, then $u\in\mbox{ker}\, L_+$ and there exists a vector $\tilde{\beta}=(\beta_1,\beta_2)\in\R^2$ such that
  $u=\tilde{\beta}\cdot \na Q_\omega$.
Then, by  (\ref{eqn:L+3-}) and (\ref{eqn:L+3}), $u \in \mathcal{W} $ yields that
$$
0= \int_{\R^2}\Theta_j(Q_\omega)u {\rm d}x=\int_{\R^2}\Theta_j(Q_\omega)(\tilde{\beta}\cdot \na Q_\omega){\rm d}x=-\beta_j\|\pa_{x_j}Q_\omega\|_H^2,\,\,j=1,2.
$$
Or $\tilde{\beta}=0$ and $u=0$, which contradicts the fact that $\|u\|_{L^2_w} =1$. Thus, the proof of the lemma is complete if $\mu=0$ is proved.

To prove $\mu =0$, note that
$$
\DD(x\cdot\na Q_\omega)=2\DD Q_\omega+\sum_{j=1}^2x_j\pl_{x_j}\DD Q_\omega.
$$
The exponential decay of $Q_\omega$ at infinity gives
\begin{equation}\lab{zuhe1}
\aligned
&L_+(x\cdot\na Q_\omega)=-2\DD Q_\omega+\sum_{j=1}^2x_j\pl_{x_j}\left[-\DD Q_\omega+\omega Q_\omega+(\ln|x|\ast Q_\omega^2)Q_\omega\right]\\
&\qquad +\left[\ln\frac{1}{|x|}\ast Q_\omega^2\right]Q_\omega-\|Q_\omega\|_2^2Q_\omega
=-2\DD Q_\omega+\left[\ln\frac{1}{|x|}\ast Q_\omega^2\right]Q_\omega-\|Q_\omega\|_2^2Q_\omega.
\endaligned
\end{equation}
Moreover, since $Q_\omega$ is the unique positive solution of \re{eqn:2sn}, it is obtained that
\begin{align}\lab{zuhe2}
L_+Q_\omega=2(\ln|x|\ast Q_\omega^2)Q_\omega.
\end{align}
Thus,  \re{zuhe1}-\re{zuhe2} yield that
$L_+(aQ_\omega+bx\cdot\na Q_\omega))=\mu Q_\omega$, where
$$
a=\frac{-3\mu}{4\omega+2\|Q_\omega\|_2^2},\quad\quad b=\frac{-\mu}{2\omega+\|Q_\omega\|_2^2}.
$$
%Recall that there is a unique radial solution $\rho\in X$ (see \cite{Wein1})of
%$$
%L_+(\rho)=-|x|^2Q.
%$$
Since $L_+u=\mu Q_\omega$, it is deduced that
$$
u=\vartheta\cdot\na Q_\omega+aQ_\omega+bx\cdot\na Q_\omega
$$
for some $\vartheta\in\R^2$.
Since $u\in \mathcal{W}$, (\ref{eqn:L+3-}) and (\ref{eqn:L+3}) imply that
$$
\aligned
0=&\int_{\R^2}u\Theta_j(Q_\omega){\rm d}x=\frac{-3\mu}{4\omega+2\|Q_\omega\|_2^2}\int_{\R^2}\Theta_j(Q_\omega)Q_\omega {\rm d}x\\&
-\frac{\mu}{2\omega+\|Q_\omega\|_2^2}
\int_{\R^2}\Theta_j(Q_\omega)x\cdot\na Q_\omega {\rm d}x
-\vartheta_j\|\pl_{x_j}Q_\omega\|_{H^1}^2,\,\,j=1,2.
\endaligned
$$
It is straightforward to check that $\int_{\R^2}\Theta_j(Q_\omega)Q_\omega {\rm d}x=0$. Since
the function $(\ln|x|\ast Q_\omega^2)Q_\omega$ is radially symmetric
with respect to $x$, it is found that
$$
\aligned
&\int_{\R^2}\Theta_j(Q_\omega)x\cdot\na Q_\omega {\rm d}x=\int_{\R^2} \left ( \pl_{x_j}\left [\left (\ln|x|\ast Q_\omega^2\right )Q_\omega\right ] \right )  |x|Q'_\omega(|x|){\rm d}x=0\, ,
\endaligned
$$
where the fact that $x_j$-derivative of a radially symmetric function is odd in terms of $x_j$ has been used.
Hence,  $\vartheta=0$ is derived and $u=aQ_\omega+bx\cdot\na Q_\omega$.
Since $u\in \mathcal{W}$, it is deduced that
$$
0=(u,Q_\omega)_{L^2} =(aQ_\omega+bx\cdot\na Q_\omega, Q_\omega)_{L^2} =(a-b)\|Q_\omega\|_2^2,
$$
which gives $a= b$ and $\mu=0$.
\ep

\bl\label{X-jiajienu}
There exists $\nu_3^*>0$ such that $
\inf_{u\in\mathcal{W}_\nu}\frac{\lan L_{+,\nu}u,u\ran_{L^2}}{\|u\|_{X}^2}>0
$ for all $\nu\in[0,\nu_3^*)$.
\el

\bp
The lemma will be proved by a contradiction if otherwise.
Assume that
there exist a sequence $\{\nu_n\}$ with $\nu_n\rightarrow0^+$ and
a sequence $\{u_n\}\subset \mathcal{V}_{\nu_n}$ with the norm $\|u_n\|_{X}=1$
such that $\liminf_{n\rightarrow\infty}\lan L_{+,\nu_n}u_n,u_n\ran_{L^2}\leq0$.
Since $\left\|u_n\right\|_{X}^2=1$,  $\{u_n\}$ is bounded in $\tilde{X}$ and
there exists a $u_0\in \tilde{X}$ such that $u_n\rightharpoonup u_0$ in $\tilde{X}$ and
$u_n\rightarrow u_0, \, a.e.$
%%%
%\footnote{[ZWN] DO you use `a.e.' for a zero-measure set?
%}
%%%
with its derivatives.
It follows from (\ref{cov-X+}) that $Q_{\omega,\nu_n}\rightarrow Q_{\omega}$ in $\tilde{X}$, and then
$Q_{\omega,\nu_n}u_n\rightarrow Q_{\omega}u_0$ in $\tilde{X}$. By applying  almost the same
%%%
%\footnote{[ZWN] revised from
%`identical'
%}
%%
arguments as those for (\ref{cov-Xguji1}) and (\ref{cov-Xguji2}),
it is obtained that
\begin{equation}\label{eqn-spec}
\aligned
  0&\geq\liminf_{n\rightarrow\infty}\langle L_{+,\nu_n}u_n,u_n\rangle _{L^2}\\
  &=\liminf_{n\rightarrow\infty}\left(\left\|u_n\right\|_{H^1}^2+{B_1\left (Q_{\omega,\nu_n}^2,u_n^2\right )-
  B_2\left (Q^2_{\omega},u_0^2\right )+2B_1(Q_{\omega}\phi_0,Q_{\omega}u_0)
  -2B_2(Q_{\omega}u_0,Q_{\omega}u_0)}\right)\\
  &\geq\left\|u_0\right\|_{H^1}^2+B_0\left (Q^2_{\omega},u_0^2\right )+2B_0(Q_{\omega}u_0,Q_{\omega}u_0) =\langle L_{+}u_0,u_0\rangle_{L^2},
\endaligned
\end{equation}
which implies that $u_0\in \tilde{X}\setminus\{0\}$ by using a similar argument to \eqref{eqn:+++}. Moreover, $(u_n,Q_{\omega,\nu_n})_{L^2} =0$
gives $(u_0,Q_{\omega})_{L^2}=0$, and $(u_n,\Theta_{j,\nu_n}(Q_{\omega,\nu_n}))_{L^2}=0$
yields $(u_0,\Theta_{j}(Q_{\omega}))_{L^2}=0, j=1,2$. Therefore, (\ref{eqn-spec}) contradicts to the conclusion of Lemma \ref{X-jiajie} and
this completes the proof.
\ep

We are ready to prove the following proposition.
\bo\lab{zui}
If $w=u+iv\in \tilde{X}(\R^2,\mathbb{C})$ with $u,v\in \tilde{X}$ satisfying
$$
\|w+Q_{\omega,\nu}\|_{L^2(\R^2,\mathbb{C})}=\|Q_{\omega,\nu}\|_2  \mbox{ and }
(u,\Theta_{j,\nu}(Q_{\omega,\nu}))_{L^2}=0,\,j=1,2\, ,
$$
then, for any fixed $\nu\in[0,\nu_3^*)$ with $\nu_3^*$ given in Lemma \ref{X-jiajienu}, there exist $D,D_1,D_2>0$ independent of $\nu$ such that
$$
\lan L_{+, \nu} u,u\ran_{L^2}\ge D\|u\|_X^2-D_1\|w\|_{H^1(\R^2,\mathbb{C})}^3-D_2\|w\|_{H^1(\R^2,\mathbb{C})}^4.
$$
\eo

\bp
Since $\|w+Q_{\omega,\nu}\|_{L^2(\R^2,\mathbb{C})}=\|Q_{\omega,\nu}\|_2$, it is obtained that
$(u,Q_{\omega,\nu})_{L^2}=-\frac{1}{2}\|w\|_{L^2(\R^2,\mathbb{C})}^2$.
% Without loss of generality, assume that $\|r\|_2=1$.
Define
$$
u:=u_{\shortparallel}+u_{\perp},\,\,u_{\shortparallel}:=(u,Q_{\omega,\nu})_{L^2}\|Q_{\omega,\nu}\|_2^{-2}Q_{\omega,\nu}\, ,
$$
which gives $(u_{\perp},Q_{\omega,\nu})_{L^2}=0$.
It is found that
 $$
 \|\nabla u\|_2^2\leq 2(\|\nabla u_{\shortparallel}\|_2^2+\|\nabla u_{\perp}\|_2^2)
 =\frac{1}{2}\frac{\|w\|_{L^2(\R^2,\mathbb{C})}^4}{\|Q_{\omega,\nu}\|_2^{4}}\|\nabla Q_{\omega,\nu}\|_2^2+2\|\nabla u_{\perp}\|_2^2
 $$
 and
 $$
 \| u\|_w ^2\leq 2\left (\| u_{\shortparallel}\|_w ^2+\|u_{\perp}\|_w ^2\right )
 =\frac{1}{2}\frac{\|w\|_{L^2(\R^2,\mathbb{C})}^4}{\|Q_{\omega,\nu}\|_2^{4}}\| Q_{\omega,\nu}\|_w ^2+2\| u_{\perp}\|_w ^2.
 $$
Since $(Q_{\omega,\nu},\Theta_{j,\nu} (Q_{\omega,\nu}))_{L^2}=0$ for $j=1,2$, it is deduced from $(u,\Theta_{j,\nu} (Q_{\omega,\nu}))_{L^2}=0$ that  $(u_{\perp},\Theta_{j,\nu}(Q_{\omega,\nu}))_{L^2}=0,\,j=1,2$,
and $u_{\perp}\in\mathcal{W}_\nu$.
  Thus, above inequalities and Lemma \ref{X-jiajienu} imply that
\begin{equation}\lab{low1}
\aligned
 & \lan L_{+,\nu}u_{\perp},u_{\perp}\ran_{L^2}\ge C\|u_{\perp}\|_{X}^2
=C\left (\omega\|u\|_2^2-\omega\|u_{\shortparallel}\|_2^2+\|u_{\perp}\|_w ^2+\|\nabla u_{\perp}\|_2^2\right )\\
&\quad \geq C\left(\omega\|u\|_2^2-\omega \left |(u,Q_{\omega,\nu})_{L^2}\right |^2+\frac12\|\nabla u\|_2^2-\frac{1}{4}\frac{\|w\|_{L^2(\R^2,\mathbb{C})}^4}{\|Q_{\omega,\nu}\|_2^{4}}
\left (\|\nabla Q_{\omega,\nu}\|_2^2+\|Q_{\omega,\nu}\|_w ^2\right )+\frac12\|u\|_w ^2\right)\\
&\quad \geq  C_1\|u\|_X^2-C_2\|w\|_{L^2(\R^2,\mathbb{C})}^4
\endaligned
\end{equation}
for some $C, C_1,C_2>0$.
On the other hand, since $Q_{\omega,\nu}$ is a solution of \re{eqn:2sn}, it is found that
$$
\|\na Q_{\omega,\nu}\|_2^2+\omega\|Q_{\omega,\nu}\|_2^2
=\int_{\R^2}\left[\ln\frac{1}{|x|}\ast Q_{\omega,\nu}^2\right]Q_{\omega,\nu}^2{\rm d}x+\nu\|Q_{\omega,\nu}\|_{p+1}^{p+1}\, ,
$$
which gives
$$
\lan L_{+,\nu}Q_{\omega,\nu},Q_{\omega,\nu}\ran_{L^2}
=-2\int_{\R^2}\left[\ln\frac{1}{|x|}\ast Q_{\omega,\nu}^2\right]Q_{\omega,\nu}^2{\rm d}x
-\nu(p-1)\|Q_{\omega,\nu}\|_{p+1}^{p+1}\, .
$$
Also,  \re{eqn:es1} and \re{eqn:es3} imply that
\be\lab{low2}
\lan L_{+,\nu}u_{\shortparallel},u_{\shortparallel}\ran_{L^2}
=\frac{1}{4}\|w\|_{L^2(\R^2,\mathbb{C})}^4\|Q_{\omega,\nu}\|_2^{-4}\lan L_{+,\nu}Q_{\omega,\nu},Q_{\omega,\nu}\ran_{L^2}\ge
-C\|w\|_{L^2(\R^2,\mathbb{C})}^4.
\ee
Finally, since $Q_{\omega,\nu}$ satisfies \re{eqn:2sn}, by $(u_{\perp},Q_{\omega,\nu})_{L^2}=0$, one obtains that  for some $C>0$,
\begin{align*}
&\lan L_{+,\nu}u_{\perp},Q_{\omega,\nu}\ran_{L^2} =\lan L_{+,\nu}Q_{\omega,\nu},u_{\perp}\ran_{L^2}\\
&=
\int_{\R^2}\left (\na Q_{\omega,\nu}\cdot \na u_{\perp}+\omega Q_{\omega,\nu} u_{\perp}\right ){\rm d}x
-3\int_{\R^2}\left[\ln\frac{1}{|x|}\ast Q_{\omega,\nu}^2\right]Q_{\omega,\nu} u_{\perp}{\rm d}x
-\nu p\int_{\R^2}Q_{\omega,\nu}^{p}u_{\perp}{\rm d}x \\
&=-2\int_{\R^2}\left[\ln\frac{1}{|x|}\ast Q_{\omega,\nu}^2\right]Q_{\omega,\nu} u_{\perp}{\rm d}x
-\nu (p-1)\int_{\R^2}Q_{\omega,\nu}^{p}u_{\perp}{\rm d}x\geq
-C\|w\|_{H^1(\R^2,\mathbb{C})}.
\end{align*}
Thus,
\begin{equation}\lab{low3}
\lan L_{+,\nu}u_{\perp},u_{\parallel}\ran_{L^2}
=-\frac{1}{2}\|w\|_2^2\lan L_{+,\nu}u_{\perp},Q_{\omega,\nu}\ran_{L^2}\le C\|w\|_{H^1(\R^2,\mathbb{C})}^3.
\end{equation}
 The inequalities \re{low1}-\re{low3} yield the conclusion of this proposition.
\ep

%%%%%%%%%%%%%%%%%%%%%%%%%%%%%%%%%%%%%%%%%%%%%%%%%%%%%%

\subsection{Proof of orbital stability, i.e., Theorem \ref{Th3}}

This subsection provides the proof of orbital stability of $Q_{\omega, \nu}$ for \eqref{eqn:q1}.

%%Before proceeding, let us introduce a notation.
For any fixed real number $\e>0$, define
$$
\Omega_{Q_{\omega,\nu}}^{\e}:=\{z\in \tilde{X}(\R^2,\mathbb{C});\, \mbox{dist}(z,\Omega_{Q_{\omega,\nu}})<\e\}
$$
as the $\e$-neighborhood of $\Omega_{Q_{\omega,\nu}}$ in $X$, where $\Omega_{Q_{\omega,\nu}}:=\{e^{i\theta}Q_{\omega,\nu}(\cdot-y):\,\,\theta\in\R,y\in\R^2\}$ and dist$(\cdot, \cdot)$ is taken in $X$.

\begin{lemma}\label{lem-inf}
For any fixed $\nu\in[0,\nu_3^*)$ with $\nu_3^*$ given in Lemma \ref{X-jiajienu}, there exist $M,h,{d_1}>0$, depending only on
$\|Q_{\omega,\nu}\|_X$, such that for all $\e\in(0,h)$ and
$z\in \Omega_{Q_{\omega,\nu}}^{\e}$ with $\|z\|_{X(\R^2,\mathbb{C})}\leq M$,
there are $\tilde{y}\in\R^2$ with $|\tilde{y}|\leq {d_1}$ and $\tilde{\theta}\in\R$ satisfying
 $$
 \inf\limits_{y\in\R^2,\theta\in\R}\|e^{-i\theta}z(\cdot+y)-Q_{\omega,\nu}\|_{X(\R^2,\mathbb{C})}^2=
 \|e^{-i\tilde{\theta}}z(\cdot+\tilde{y})-Q_{\omega,\nu}\|_{X(\R^2,\mathbb{C})}^2.
 $$
\end{lemma}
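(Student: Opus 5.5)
The plan is a direct minimization argument. I view
$$
f(\theta,y):=\|e^{-i\theta}z(\cdot+y)-Q_{\omega,\nu}\|_{X(\R^2,\mathbb{C})}^2
$$
as a function on $\R\times\R^2$. It is $2\pi$-periodic in $\theta$. I will show three things: $f$ is continuous; its infimum is small; and $f$ stays above that infimum once $|y|\ge d_1$. The minimum is then attained on the compact set $[0,2\pi]\times\{|y|\le d_1\}$.

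Since $X$ is not translation invariant, the only tool needed is the elementary bound $\ln(1+|x+y|)\le \ln(1+|x|)+\ln(1+|y|)$. It gives
$$
\|g(\cdot+y)\|_w^2\le \|g\|_w^2+\ln(1+|y|)\,\|g\|_2^2 \qquad \text{for } g\in X(\R^2,\mathbb{C}),
$$
so translations act boundedly on $X$, with constants that are locally uniform in $y$. Continuity of $f$ follows from this bound together with continuity of translations in $H^1$ and dominated convergence for the weighted term, using $\ln(1+|x+y|)|z(x+y)|^2$ locally in $y$.

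\emph{Step 1 (the near orbit point is not far away).} Since $z\in\Omega^{\e}_{Q_{\omega,\nu}}$, there are $\theta_0$ and $y_0$ with $\|z-e^{i\theta_0}Q_{\omega,\nu}(\cdot-y_0)\|_{X}<\e$. Hence $\|Q_{\omega,\nu}(\cdot-y_0)\|_X\le M+h$. On the other hand, $\|Q_{\omega,\nu}(\cdot-y_0)\|_w^2=\int\ln(1+|x+y_0|)Q_{\omega,\nu}^2\,{\rm d}x$. By the exponential decay \eqref{expo}, most of the mass of $Q_{\omega,\nu}$ lies in a fixed ball, so this quantity is at least $c\ln(1+|y_0|)\|Q_{\omega,\nu}\|_2^2-C$. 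This forces $|y_0|\le d_0$, with $d_0$ depending only on $M$ and $Q_{\omega,\nu}$.

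\emph{Step 2 (the infimum is small).} Apply the translation bound to $g=e^{-i\theta_0}z(\cdot+y_0)-Q_{\omega,\nu}$, which is the translate of $e^{-i\theta_0}\bigl(z-e^{i\theta_0}Q_{\omega,\nu}(\cdot-y_0)\bigr)$. This gives
$$
\inf f\le f(\theta_0,y_0)\le \bigl(1+\ln(1+d_0)\bigr)\e^2\le \bigl(1+\ln(1+d_0)\bigr)h^2 .
$$

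\emph{Step 3 (far translates are worse).} Use only the $L^2$ part, which is translation invariant. We have $\|z(\cdot+y)-e^{i\theta_0}Q_{\omega,\nu}(\cdot+y-y_0)\|_2<\e$. Therefore
$$
f(\theta,y)\ge \omega\bigl(\|Q_{\omega,\nu}(\cdot+y-y_0)-e^{i(\theta-\theta_0)}Q_{\omega,\nu}\|_2-\e\bigr)^2 .
$$
The $L^2$ norm on the right tends to $\sqrt2\|Q_{\omega,\nu}\|_2$, uniformly in the phase, as $|y-y_0|\to\infty$; this again uses \eqref{expo}. So there is $R$ such that $f(\theta,y)\ge \tfrac{\omega}{2}\|Q_{\omega,\nu}\|_2^2$ for $|y|\ge d_1:=d_0+R$.

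Now choose $h$ so small that $\bigl(1+\ln(1+d_0)\bigr)h^2<\tfrac{\omega}{2}\|Q_{\omega,\nu}\|_2^2$. Then the infimum of $f$ equals its infimum over $[0,2\pi]\times\{|y|\le d_1\}$, and it is attained there by continuity, which gives $\tilde\theta$ and $\tilde y$.

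The step I expect to be the main obstacle is Step 1 together with the continuity claim, because the weighted norm is not translation invariant. I would handle both through the logarithmic triangle inequality above. I would also keep careful track that all constants depend only on $\|Q_{\omega,\nu}\|_X$, $\|Q_{\omega,\nu}\|_2$ and the uniform decay rate in \eqref{expo}, and that they are uniform for $\nu\in[0,\nu_3^*)$.
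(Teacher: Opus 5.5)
Your argument is correct, but it obtains the two localization facts differently from the paper. Your Step~2 is the same translation comparison $g\le(1+\ln(1+d_1))f$ that closes the paper's proof; the difference lies in Steps~1 and~3.

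\emph{The paper's route.} It expands both $g(y,\theta)=\|e^{-i\theta}z(\cdot+y)-Q_{\omega,\nu}\|_X^2$ and $f(y,\theta)=\|z-e^{i\theta}Q_{\omega,\nu}(\cdot-y)\|_X^2$ into norms plus cross terms. It then shows the cross terms vanish as $|y|\to\infty$ uniformly on the ball $\|z\|_X\le M$; for the weighted cross term in $g$ this is where Lemma~\ref{Lem:int0} and \eqref{expo} are needed. This gives a far-field lower bound for $g$ valid for every such $z$. It also gives a far-field lower bound for $f$, which forces the near orbit point into a fixed ball.

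\emph{Your route.} You get the localization of $y_0$ directly from $\|Q_{\omega,\nu}(\cdot-y_0)\|_w\le M+h$ and the logarithmic growth of the weighted norm of far translates. You get the far-field bound from the translation-invariant $L^2$ part alone, via the triangle inequality around $e^{i\theta_0}Q_{\omega,\nu}(\cdot-y_0)$ and $\int Q_{\omega,\nu}(x+a)Q_{\omega,\nu}(x)\,{\rm d}x\to0$. This avoids the uniform cross-term estimates and Lemma~\ref{Lem:int0} altogether; for fixed $\nu$ it only uses $Q_{\omega,\nu}\in L^2$, not the exponential decay. The price is that your far-field threshold is anchored at $y_0$ and needs $\e$ small. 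Since $|y_0|\le d_0$, the resulting $d_1=d_0+R$ is just as uniform.

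\emph{Two bookkeeping fixes.} First, since $\|\cdot\|_2^2\le\omega^{-1}\|\cdot\|_X^2$, the constants in Steps~2 and~3 should read $1+\omega^{-1}\ln(1+d_0)$ and $\e/\sqrt{\omega}$. Second, $d_0$ depends on $M+h$, so first fix $h\le 1$ (giving $d_0=d_0(M+1)$) and only then shrink $h$; this removes the apparent circularity.
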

\Proof
Given $M>0$, for any fixed $z\in X(\R^2,\mathbb{C})$ with $\|z\|_{X(\R^2,\mathbb{C})}\leq M$, it is obtained that
$$
\aligned
&\|e^{-i\theta}z(\cdot+y)-Q_{\omega,\nu}\|_{X(\R^2,\mathbb{C})}^{2}\\
& \quad =\|Q_{\omega,\nu}\|_{X}^{2}+
\|z(\cdot+y)\|_{X(\R^2,\mathbb{C})}^{2}-2\mathfrak{Re}\int_{\R^2}(\omega+\ln(1+|x|))e^{i\theta}\bar{z}(\cdot+y)Q_{\omega,\nu} {\rm d}x\\
&\qquad \qquad \qquad -2\mathfrak{Re}\int_{\R^2}e^{i\theta}\nabla\bar{z}(\cdot+y)\cdot \nabla Q_{\omega,\nu} {\rm d}x\\
& \quad =\|Q_{\omega,\nu}\|_{X}^{2}+
\|z(\cdot+y)\|_{X(\R^2,\mathbb{C})}^{2}-2\mathfrak{Re}\int_{\R^2}(\omega+\ln(1+|x|))e^{i\theta}\bar{z}(\cdot+y)Q_{\omega,\nu} {\rm d}x\\
&\qquad \qquad \qquad + 2\mathfrak{Re}\int_{\R^2}e^{i\theta}\bar{z}(\cdot+y)\Delta Q_{\omega,\nu} {\rm d}x\\
& \quad =:\|Q_{\omega,\nu}\|_{X}^{2}+
\|z(\cdot+y)\|_{X(\R^2,\mathbb{C})}^{2}-A_1+A_2,~~~
\forall y\in\R^2,\theta\in\R.
\endaligned
$$
Since $Q_{\omega,\nu}\in X$  exponentially decays to zero at infinity,
the families of functions $\{\ln(1+|\cdot-y|)Q_{\omega,\nu}(\cdot-y)\}_{y\in\R^2}$ and $\{\Delta Q_{\omega,\nu}(\cdot-y)\}_{y\in\R^2}$ converge
pointwise to zero as $|y|\rightarrow\infty$.
By the Lebesgue dominant convergence theorem, it is found that $A_1,A_2\rightarrow0$ as $|y|\rightarrow\infty$. Here, we note that the convergence may not be uniform with respect to $z$ with $\|z\|_{X(\R^2,\mathbb{C})}\leq M$ if Lebesgue dominant convergence theorem is used. To
obtain the uniform convergence to zero, we can use (\ref{expo}) and Lemma \ref{Lem:int0} in Appendix to estimate $A_1$ as follows,
\begin{align*}
2 & \left | \mathfrak{Re}\int_{\R^2}(\omega+\ln(1+|x|))e^{i\theta}\bar{z}(\cdot+y)Q_{\omega,\nu} {\rm d}x\right |\\
&\qquad \leq  C \int_{\R^2}(1 +\ln(1+|x-y|))   | \bar{z}(x)| \left | Q_{\omega,\nu} (x - y ) \right | {\rm d}x \\
&\qquad \leq  C\|z\|_{X(\R^2,\mathbb{C})} \left (
\int_{\R^2}\frac{ (1 +\ln(1+|x-y|)) ^2}{ ( 1 + \ln ( 1 + |x| ) ) } \left | Q_{\omega,\nu} (x - y ) \right |^2 {\rm d}x \right )^{1/2} \\
&\qquad \leq  C\|z\|_{X(\R^2,\mathbb{C})} \left (
\int_{\R^2}\frac{ e^{-(1/4) |x-y|}}{ ( 1 + \ln ( 1 + |x| ) ) } {\rm d}x \right )^{1/2}\\
 &\qquad\leq C\|z\|_{X(\R^2,\mathbb{C})}(1+\ln(1+ |y|))^{-1/2}\, ,
\end{align*}
which gives that $|A_1| \rightarrow 0$ uniformly as $ | y | \rightarrow \infty$ if $\|z\|_{X(\R^2,\mathbb{C})} \leq M$. $A_2$ can be estimated similarly.

Consider the function $g:\R^2\times\R\rightarrow\R$ defined by
$$
g(y,\theta)=\|e^{-i\theta}z(\cdot+y)-Q_{\omega,\nu}\|_{X(\R^2,\mathbb{C})}^{2},
$$
which is $2\pi$-periodic with respect to $\theta$. Then,
$\lim_{|y|\rightarrow+\infty}g(y,\theta)>\frac{3}{4}\|Q_{\omega,\nu}\|_{H^1}^2$
%%
%\footnote{[ZWN] ``uniformly for'' makes a confusion. `for ...' means `holds for each point'.
%Maybe we want to say `uniformly with respect to ...''? We need to check all `uniform'.
%}
%%
\textcolor[rgb]{0.00,0.07,1.00}{uniformly with respect to} $\theta\in[0,2\pi)$ and $z\in \Omega_{Q_{\omega,\nu}}^{\e}$ with $\|z\|_{X(\R^2,\mathbb{C})}\leq M$.
Thus, there exists $d_0 >0$, which is dependent on $Q_{\omega,\nu}$ but independent of
$z\in \Omega_{Q_{\omega,\nu}}^{\e}$ satisfying $\|z\|_{X(\R^2,\mathbb{C})}\leq M$, such that for any $d \geq d_0$,
\begin{equation}\lab{infH++0}
g(y,\theta)>\frac{1}{2}\|Q_{\omega,\nu}\|_{H^1}^2 \quad\text{for}\,\, |y|\geq d.
\end{equation}
Moreover, for any $|y|\leq d$ and $\theta\in[0,2\pi)$, one obtains that
\begin{equation}\lab{infH++}
\aligned
g(y,\theta)=&\|e^{-i\theta}z(\cdot+y)-Q_{\omega,\nu}\|_{X(\R^2,\mathbb{C})}^{2}\\
=&\|z(\cdot+y)-e^{i\theta}Q_{\omega,\nu}\|_{X(\R^2,\mathbb{C})}^{2}\\
=&\|z-e^{i\theta}Q_{\omega,\nu}(\cdot-y)\|_{H^1(\R^2,\C)}^{2}+\int_{\R^2}\ln(1+|x-y|)\left | z-e^{i\theta}Q_{\omega,\nu}(\cdot-y)\right | ^2dx\\
\leq&\|z-e^{i\theta}Q_{\omega,\nu}(\cdot-y)\|_{X(\R^2,\mathbb{C})}^{2}+\int_{\R^2}\ln(1+|y|)\left | z-e^{i\theta}Q_{\omega,\nu}(\cdot-y)\right | ^2dx\\
\leq&(1+\ln(1+d))\|z-e^{i\theta}Q_{\omega,\nu}(\cdot-y)\|_{X(\R^2,\mathbb{C})}^{2}.
\endaligned
\end{equation}

On the other hand, if the function $f:\R^2\times\R\rightarrow\R$ is defined by
$$
f(y,\theta)=\|z-e^{i\theta}Q_{\omega,\nu}(\cdot-y)\|_{X(\R^2,\mathbb{C})}^{2},
$$
which is $2\pi$-periodic with respect to $\theta$,  a direct computation yields
$$
\aligned
f(y,\theta)=&\|z-e^{i\theta}Q_{\omega,\nu}(\cdot-y)\|_{X(\R^2,\mathbb{C})}^{2}\\
\geq&\|z-e^{i\theta}Q_{\omega,\nu}(\cdot-y)\|_{H^1(\R^2,\mathbb{C})}^{2}\\
=&\|z\|_{H^1(\R^2,\mathbb{C})}^{2}+
\|Q_{\omega,\nu}\|_{H^1}^{2}-2\mathfrak{Re}\int_{\R^2}\omega e^{i\theta}\bar{z}Q_{\omega,\nu}(\cdot-y) {\rm d}x
-2\mathfrak{Re}\int_{\R^2}e^{i\theta}\nabla\bar{z}\nabla Q_{\omega,\nu}(\cdot-y) {\rm d}x\\
=&\|z\|_{H^1(\R^2,\mathbb{C})}^{2}+
\|Q_{\omega,\nu}\|_{H^1}^{2}-2\mathfrak{Re}\int_{\R^2}\omega e^{i\theta}\bar{z}Q_{\omega,\nu}(\cdot-y) {\rm d}x
+2\mathfrak{Re}\int_{\R^2}e^{i\theta}\bar{z}\Delta Q_{\omega,\nu}(\cdot-y) {\rm d}x\\
=:&\|z\|_{H^1(\R^2,\mathbb{C})}^{2}+
\|Q_{\omega,\nu}\|_{H^1}^{2}-\tilde{A}_1-\tilde{A}_2,~~~
\forall y\in\R^2,\theta\in\R.
\endaligned
$$
Similar to $A_1,A_2$, the Lebesgue dominant convergence theorem implies that
$\tilde{A}_1,\tilde{A}_2\rightarrow0$ as $|y|\rightarrow\infty$, while the uniform convergence can be obtained using the condition $\|z\|_{X(\R^2,\mathbb{C})}\leq M$. Then,
$\lim_{|y|\rightarrow+\infty}f(y,\theta)>\frac{3}{4}\|Q_{\omega,\nu}\|_{H^1}^2$ uniformly for $\theta\in[0,2\pi)$ and $z\in \Omega_{Q_\omega}^{\e}$ which
satisfies $\|z\|_{X(\R^2,\mathbb{C})}\leq M$.
Hence, there exists $\tilde{d}>0$ depending only on $Q_{\omega,\nu}$ such that for any $d \geq \tilde d$, $f(y,\theta)>\frac{3}{4}\|Q_{\omega,\nu}\|_{H^1} ^2$ for $|y|\geq {d}$.
%{\color{red}\{\{[ZWN250619] The following is revised. Please check.\}»ØžŽ£ºÄúÐÞžÄµÄÒâËŒºÜ×ŒÈ·¡£\}
%}

Let  $d_1>0$ large enough so that $d_1\geq \tilde{d}$ and $d_1 \geq d_0$ and choose
$$
h:=\frac{1}{2(1+\ln(1+d_1))}\|Q_{\omega,\nu}\|_{H^1}^2 < \frac12 \|Q_{\omega,\nu}\|_{H^1}^2 \, .
$$
Then, for $\e\in(0,h)$ and $z\in \Omega_{Q_{\omega,\nu}}^{\e}$ with $\|z\|_{X(\R^2,\mathbb{C})}\leq M$,
from the definition of $\Omega_{Q_{\omega,\nu}}^{\e}$
there exist $y_*\in\R^2$ and $\theta_*\in[0,2\pi)$ with $| y_*| \leq d_1$ such that
\begin{equation}\lab{infH++1}
f ( y_* , \theta_*) = \|z-e^{i\theta_*}Q_{\omega,\nu}(\cdot-y_*)\|_{X(\R^2,\mathbb{C})}^{2}\leq\frac{1}{2(1+\ln(1+d_1))}\|Q_{\omega,\nu}\|_{H^1}^2.
\end{equation}

Thus,  \eqref{infH++} and \eqref{infH++1} yield
$$
\aligned
g(y_*,\theta_*)=&\|e^{-i\theta_*}z(\cdot+y_*)-Q_{\omega,\nu}\|_{X(\R^2,\mathbb{C})}^{2}\\
&\leq(1+\ln(1+d_1))\|z-e^{i\theta_*}Q_{\omega,\nu}(\cdot-y_*)\|_{X(\R^2,\mathbb{C})}^{2} \leq \frac{1}{2}\|Q_{\omega,\nu}\|_{H^1}^2,
\endaligned
$$
which implies by \eqref{infH++0} and continuity of $g$ that
the infimum of $g$ is achieved at the compact set $\overline{B_{d_1}(0)}\times[0,2\pi]$, which completes the proof. \qed

Furthermore, a byproduct of the proof of Lemma \ref{lem-inf} yields  the following corollary.
Note that in the minimization problem stated in the corollary with respect to the $H^1(\R^2,\mathbb{C})$ norm,
it is unnecessary to impose the additional constraint $\|z\|_{X(\R^2,\mathbb{C})}\leq M$
for the condition of $z\in \Omega_{Q_{\omega,\nu}}^{\e}$, since translation invariance holds in this setting. However, the infimum of $g$ need not be attained on a compact set.
\begin{corollary}\label{cor-infH}
For any fixed $\nu\in[0,\nu^*)$ with $\nu^*$ given in Lemma \ref{X-jiajienu}, there exists $\tilde{R}$, depending  on $\|Q_{\omega,\nu}\|_{H^1}$ only,
such that for each $\e\in(0,\tilde{R})$ and $z\in \Omega_{Q_{\omega,\nu}}^{\e}$
there are $\theta^*\in\R$  and $y^*\in\R^2$ satisfying
 $$
 \inf\limits_{y\in\R^2,\theta\in\R}\|z-e^{i\theta}Q_{\omega,\nu}(\cdot-y)\|_{H^1(\R^2,\mathbb{C})}^2=
 \|z-e^{i\theta^*}Q_{\omega,\nu}(\cdot-y^*)\|_{H^1(\R^2,\mathbb{C})}^2.
 $$
\end{corollary}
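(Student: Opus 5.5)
The plan is to repeat the $f$-part of the proof of Lemma \ref{lem-inf}, this time with the translation-invariant $H^1(\R^2,\mathbb{C})$ norm only. Fix $\nu$ and $z\in\Omega^{\e}_{Q_{\omega,\nu}}$, and set
$$
F(y,\theta):=\|z-e^{i\theta}Q_{\omega,\nu}(\cdot-y)\|_{H^1(\R^2,\mathbb{C})}^2 .
$$
This function is continuous on $\R^2\times\R$ and $2\pi$-periodic in $\theta$. Expanding it as in the computation of $f$ in Lemma \ref{lem-inf} gives
$$
F(y,\theta)=\|z\|_{H^1(\R^2,\mathbb{C})}^2+\|Q_{\omega,\nu}\|_{H^1}^2-\tilde A_1(y,\theta)-\tilde A_2(y,\theta),
$$
where $\tilde A_1,\tilde A_2$ are the cross terms pairing $\bar z$ with $\omega Q_{\omega,\nu}(\cdot-y)$ and $-\Delta Q_{\omega,\nu}(\cdot-y)$. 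For this single fixed $z$ (no uniformity in $z$ is needed), $z\in L^2$ and the exponential decay (\ref{expo}) together with elliptic regularity give $Q_{\omega,\nu},\Delta Q_{\omega,\nu}\in L^2$. Hence $\tilde A_1,\tilde A_2\to0$ as $|y|\to\infty$, uniformly in $\theta$. This follows from the weak convergence $Q_{\omega,\nu}(\cdot-y)\rightharpoonup0$ in $L^2$, or by approximating $z$ by compactly supported functions. Consequently
$$
\liminf_{|y|\to\infty}F(y,\theta)\ge\|Q_{\omega,\nu}\|_{H^1}^2 ,
$$
uniformly in $\theta$.

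Next we compare this limit with the infimum. Since the $X$-norm dominates the $H^1$-norm, $z\in\Omega^\e_{Q_{\omega,\nu}}$ gives
$$
\inf_{y,\theta}F\le \inf_{y,\theta}\|z-e^{i\theta}Q_{\omega,\nu}(\cdot-y)\|_{X(\R^2,\mathbb{C})}^2<\e^2 .
$$
Here I read $\mathrm{dist}$ as a norm distance, so it is squared; if the authors intend the squared form, the same argument works with $\e$ in place of $\e^2$. Choose $\tilde R$ so that $\tilde R^2<\frac12\|Q_{\omega,\nu}\|_{H^1}^2$; then $\tilde R$ depends only on $\|Q_{\omega,\nu}\|_{H^1}$. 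For $\e<\tilde R$ we get $\inf F<\frac12\|Q_{\omega,\nu}\|_{H^1}^2$. By the previous step there is $d=d(z)$ with $F(y,\theta)>\frac34\|Q_{\omega,\nu}\|_{H^1}^2$ for all $|y|\ge d$ and all $\theta$. So any minimizing sequence $(y_n,\theta_n)$ eventually lies in $\overline{B_d(0)}\times[0,2\pi]$, a compact set. Continuity of $F$ then yields a minimizer $(y^*,\theta^*)$. This is precisely the pattern of (\ref{infH++0})–(\ref{infH++1}), but without needing the comparison (\ref{infH++}), since the $H^1$ norm commutes with translations.

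The main subtlety is that $d$ depends on $z$. Without the bound $\|z\|_{X(\R^2,\mathbb{C})}\le M$ one cannot make the decay of $\tilde A_1,\tilde A_2$ uniform in $z$, which is consistent with the remark preceding the corollary. This is harmless, because the corollary only asserts attainment for each $z$ and requires the threshold $\tilde R$, not $d$, to be uniform. The threshold comes solely from the lower bound $\liminf F\ge\|Q_{\omega,\nu}\|_{H^1}^2$, which holds for every $z$.
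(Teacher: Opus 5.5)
Your proposal is correct and follows the paper's route: the paper obtains the corollary as a byproduct of the $f$-part of the proof of Lemma \ref{lem-inf}, where translation invariance of the $H^1(\R^2,\mathbb{C})$ norm removes the need for the comparison \eqref{infH++} and for the bound $\|z\|_{X(\R^2,\mathbb{C})}\le M$. Your observation that the confinement radius $d$ may depend on $z$, while only the threshold $\tilde R$ must depend on $\|Q_{\omega,\nu}\|_{H^1}$ alone, is what the paper's remark preceding the corollary alludes to.
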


\begin{lemma}\label{lem-xiajie}
There eixsts $0<\nu^{**}\leq \nu^*$ with $\nu^{*}$ given in Lemma \ref{X-jiajienu} such that for any fixed $\nu\in[0,\nu^{**})$, there exist $M>0$ and $\e>0$, depending only on $Q_{\omega,\nu}$, such that
$$
\mathcal{E}_\nu(z)-\mathcal{E}_\nu(Q_{\omega,\nu})\geq C \inf\limits_{y\in\R^2,\theta\in\R}\|e^{-i\theta}z(\cdot+y)
-Q_{\omega,\nu}\|_{X(\R^2,\mathbb{C})}^2
$$
for all $z\in \Omega_{Q_{\omega,\nu}}^{\e}$ which satisfies $\|z\|_{X(\R^2,\mathbb{C})}\leq M$
and $\|z\|_{2}=\|Q_{\omega,\nu}\|_2$, where $C$ is independent of $\nu$ and $\mathcal{E}_\nu (z) $ is defined in \eqref{1.9.1}.
\end{lemma}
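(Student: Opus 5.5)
Using Lemma \ref{lem-inf}, for $z\in\Omega^{\e}_{Q_{\omega,\nu}}$ with $\|z\|_{X(\R^2,\mathbb{C})}\le M$ I will fix $\tilde\theta,\tilde y$ (with $|\tilde y|\le d_1$) attaining the infimum. I then set $\tilde z:=e^{-i\tilde\theta}z(\cdot+\tilde y)$ and write $\tilde z=Q_{\omega,\nu}+w$ with $w=u+iv$. Both $\mathcal{E}_\nu$ and the $L^2$ norm are invariant under phase and translation, so $\mathcal{E}_\nu(z)-\mathcal{E}_\nu(Q_{\omega,\nu})=\mathcal{E}_\nu(Q_{\omega,\nu}+w)-\mathcal{E}_\nu(Q_{\omega,\nu})$ and $\|w+Q_{\omega,\nu}\|_2=\|Q_{\omega,\nu}\|_2$. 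Since $|\tilde y|\le d_1$, the weight $\ln(1+|x|)$ changes by at most $\ln(1+d_1)$ under this shift, so $\|w\|_{X(\R^2,\mathbb{C})}\le C(d_1)\e$. The target quantity is exactly $\|w\|_{X(\R^2,\mathbb{C})}^2$.

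The first step is the orthogonality conditions coming from minimality. The map $(y,\theta)\mapsto\|e^{-i\theta}z(\cdot+y)-Q_{\omega,\nu}\|^2_X$ is differentiable at the interior minimum. The $\theta$-derivative gives $(v,Q_{\omega,\nu})_X=0$. The $y$-derivatives give conditions on $u$ involving $\partial_{x_j}Q_{\omega,\nu}$ in the $X$ inner product.

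These conditions do not match the ones needed later: Proposition \ref{zui} requires $(u,\Theta_{j,\nu})_{L^2}=0$, and Lemma \ref{L2-1} requires $(v,Q_{\omega,\nu})_{H^1}=0$. I plan two ways to handle this. One is to decompose $v$ and $u$ along the missing directions and absorb the components. The $Q_{\omega,\nu}$-component of $v$ contributes nothing, because $L_{-,\nu}Q_{\omega,\nu}=0$. For the $\partial_{x_j}Q_{\omega,\nu}$-component of $u$, the identity $-\DD Q+\omega Q=-(\ln|x|\ast Q^2)Q+\nu Q^p$ gives $\Theta_{j,\nu}=-(-\DD+\omega)\partial_{x_j}Q_{\omega,\nu}$. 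Hence $(u,\Theta_{j,\nu})_{L^2}=-(u,\partial_{x_j}Q_{\omega,\nu})_{H^1}$, and this differs from the $X$-orthogonality condition only by the weighted term $\int\ln(1+|x|)u\,\partial_jQ_{\omega,\nu}$. Components along $\partial_{x_j}Q_{\omega,\nu}$ lie in $\ker L_{+,\nu}$, so removing them leaves the quadratic form unchanged; their size is controlled by the weighted term, which I would try to bound by a small multiple of $\|u\|_X$. The fallback is to redo the argument with the $H^1$-minimizer of Corollary \ref{cor-infH}, which yields exactly the $\Theta_{j,\nu}$ and $(v,Q_{\omega,\nu})_{H^1}$ orthogonality. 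One would then compare the $H^1$ and $X$ distances using $|\tilde y|\le d_1$ and an estimate like (\ref{infH++}).

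Next I Taylor expand the energy. Write $\mathcal{E}_\nu=S_{\omega,\nu}-\frac{\omega}{2}\|\cdot\|_2^2$ and use $S'_{\omega,\nu}(Q_{\omega,\nu})=0$ together with $\|Q_{\omega,\nu}+w\|_2=\|Q_{\omega,\nu}\|_2$. This gives
\[
\mathcal{E}_\nu(Q_{\omega,\nu}+w)-\mathcal{E}_\nu(Q_{\omega,\nu})=\tfrac12\langle L_{+,\nu}u,u\rangle+\tfrac12\langle L_{-,\nu}v,v\rangle+R(w).
\]
The remainder $R(w)$ collects the cubic and quartic terms of the logarithmic convolution and the local term. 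I will split the kernel as $B_1-B_2$. The $B_2$ parts are bounded by Hardy--Littlewood--Sobolev and Gagliardo--Nirenberg, giving $C(\|w\|_{H^1}^3+\|w\|_{H^1}^4)$. The quartic $B_1(|w|^2,|w|^2)$ is nonnegative in the energy, so it can be dropped. The cubic $B_1(Q_{\omega,\nu}\,\mathfrak{Re}w,|w|^2)$ is bounded via (\ref{eqn:es1}) by $C\|w\|_X^3$. The local term with $p\in[2,3)$ is $O(\nu\|w\|_{H^1}^3)$. Then Proposition \ref{zui} gives $\langle L_{+,\nu}u,u\rangle\ge D\|u\|_X^2-D_1\|w\|^3-D_2\|w\|^4$, and Lemma \ref{L2-1} gives $\langle L_{-,\nu}v,v\rangle\ge\delta_1\|v\|_X^2$. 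Together these yield $\ge c\|w\|_X^2-C\|w\|_X^3$, which is $\ge \frac c2\|w\|^2_X$ once $\e$ is small. The constants are uniform in $\nu$ by Lemma \ref{X-jiajienu}, which fixes $\nu^{**}$.

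The main obstacle is the mismatch between the orthogonality conditions from minimizing in $X$ and those needed by the spectral lemmas, caused by the non-translation-invariant weight. The plan for this step is to write the projections explicitly and bound the error terms using the exponential decay (\ref{expo}).
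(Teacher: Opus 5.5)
Your skeleton is sound: modulate by a minimizer, expand $S_{\omega,\nu}$ exactly to cubic order, apply the lower bounds of Proposition \ref{zui} and Lemma \ref{L2-1}, and absorb the remainder. Your main route through the $X$-minimizer of Lemma \ref{lem-inf} is also genuinely different from the paper's. But the step you single out as the main obstacle is handled by a mechanism that fails.

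Minimizing in $X$ gives $(v,Q_{\omega,\nu})_X=0$ exactly. Because the weight is not translation invariant, it gives only $(u,\partial_{x_j}Q_{\omega,\nu})_X=\frac12\int_{\R^2}\frac{x_j}{|x|(1+|x|)}|w|^2\,{\rm d}x$ for $u$. Proposition \ref{zui} needs $(u,\Theta_{j,\nu})_{L^2}=0$, and the discrepancy is
$(u,\Theta_{j,\nu})_{L^2}=-(u,\partial_{x_j}Q_{\omega,\nu})_{H^1}=\int_{\R^2}\ln(1+|x|)\,u\,\partial_{x_j}Q_{\omega,\nu}\,{\rm d}x+O(\|w\|_2^2)$.
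This weighted term cannot be bounded by a small multiple of $\|u\|_X$. It is linear in $u$, so taking $\e$ small does not help. The decay \eqref{expo} only bounds it by $\|\partial_{x_j}Q_{\omega,\nu}\|_w\|u\|_w$; it does not make it small. Hence the coefficients $\beta_j$ of the kernel components you remove are in general of size $\|u\|_X$, and a triangle-inequality bound $\|u'\|_X\ge(1-o(1))\|u\|_X$ is not available. The same objection applies to $v$: $L_{-,\nu}Q_{\omega,\nu}=0$ leaves the quadratic form unchanged but by itself gives no control of $\|v\|_X$.

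The missing observation is that no smallness is needed. $X$-minimality makes $u$ and $v$ $X$-orthogonal to exactly the directions you subtract (up to $O(\|w\|_2^2)$ for $u$), so subtracting them cannot decrease the $X$-norm.
For $v$, take $v'=v-\alpha Q_{\omega,\nu}$ with $(v',Q_{\omega,\nu})_{H^1}=0$. Then $\|v'\|_X^2=\|v\|_X^2+\alpha^2\|Q_{\omega,\nu}\|_X^2$, so $\langle L_{-,\nu}v,v\rangle=\langle L_{-,\nu}v',v'\rangle\ge\delta_1\|v\|_X^2$.
For $u$, take $u'=u-\sum_j\beta_j\partial_{x_j}Q_{\omega,\nu}$ with $(u',\partial_{x_j}Q_{\omega,\nu})_{H^1}=0$, i.e. $(u',\Theta_{j,\nu})_{L^2}=0$; the $\partial_{x_j}Q_{\omega,\nu}$ are mutually orthogonal in both $H^1$ and $X$ by symmetry. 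Then $\|u'\|_X^2\ge\|u\|_X^2-C\|u\|_X\|w\|_2^2$ and $\langle L_{+,\nu}u,u\rangle=\langle L_{+,\nu}u',u'\rangle$.
Proposition \ref{zui} must then be applied to $u'$ through its proof, since its hypothesis refers to the full $w$. This works because $(u',Q_{\omega,\nu})_{L^2}=(u,Q_{\omega,\nu})_{L^2}=-\frac12\|w\|_2^2$. With this repair your route closes, and since $\|w\|_X\le C\e$ a priori, your remainder bound $C\|w\|_X^3$ suffices.

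The paper's proof is your fallback. The $H^1$-minimizer of Corollary \ref{cor-infH} gives $(u,\Theta_{j,\nu})_{L^2}=0$ and $(v,Q_{\omega,\nu})_{H^1}=0$ exactly. Passing to the $X$-infimum uses only the two minimality properties, $\|w\|_X\ge\|\tilde w\|_X$ and $\|w\|_{H^1}\le\|\tilde w\|_{H^1}$; neither $|\tilde y|\le d_1$ nor \eqref{infH++} is needed.

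The price is that this $w$ has no a priori bound on $\|w\|_X$, because the translation $y^*$ is not localized. A remainder $C\|w\|_X^3$ would therefore not be absorbable; one needs remainders of the form $\eta\|w\|_X^2$ with $\eta$ small. The paper states $H^1$-only bounds for the cubic terms $I_1,I_3,I_6$, but, as your treatment correctly reflects, their $B_1$-parts see the weight. The estimate that actually makes the paper's route work follows from \eqref{eqn:es1}:
$|B_1(Q_{\omega,\nu}u,|w|^2)|\le\|\ln(1+|\cdot|)Q_{\omega,\nu}\|_2\|u\|_2\|w\|_2^2+\|Q_{\omega,\nu}\|_2\|u\|_2\|w\|_w^2\le C\|w\|_2\|w\|_X^2$.

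In short, your route bounds the target quantity directly and tolerates cruder remainder estimates, at the cost of the projection step. The paper's route gets the spectral orthogonality for free but needs this sharper form of the remainder.
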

\Proof
For each $\e>0$,
let $z\in \Omega_{Q_{\omega,\nu}}^{\epsilon}$  be a function such that $\|z\|_2=\|Q_{\omega,\nu}\|_2$.
By {Corollary \ref{cor-infH}}, there exist $y^*\in\R^2$ and $\theta^*\in[0,2\pi)$ such that
\begin{equation}\lab{infH}
 \inf\limits_{y\in\R^2,\theta\in\R}\|z-e^{i\theta}Q_{\omega,\nu}(\cdot-y)\|_{H^1(\R^2,\mathbb{C})}^2
 =\|z-e^{i\theta^*}Q_{\omega,\nu}(\cdot-y^*)\|_{H^1(\R^2,\mathbb{C})}^2.
\end{equation}
Let $w(x):=e^{-i\theta^*}z(x+y^*)-Q_{\omega,\nu}(x)$ and  $w=u+vi$,
where $u$ and $v$ denote the real and imaginary parts of $w$,  respectively.
Arguing similarly to Lemma \ref{lem-inf}, we obtain that
$$
\aligned
{\tilde{f}}&(y,\theta):=\|z-e^{i\theta}Q_{\omega,\nu}(\cdot-y)\|_{H^1(\R^2,\mathbb{C})}^{2}\\
&=\|Q_{\omega,\nu}\|_{H}^{2}+
\|z\|_{H^1(\R^2,\mathbb{C})}^{2}-2\, \mathfrak{Re}\int_{\R^2}e^{i\theta}\bar{z}(-\Delta+\omega)Q_{\omega,\nu}(x-y) {\rm d}x\\
&=\|Q_{\omega,\nu}\|_{H}^{2}+
\|z\|_{H^1(\R^2,\mathbb{C})}^{2}+2\, \mathfrak{Re}\int_{\R^2}e^{i\theta}\bar{z}\left[\ln(|\cdot|\ast |Q_{\omega,\nu}|^2)Q_{\omega,\nu}
 -\nu|Q_{\omega,\nu}|^{p-1}Q_{\omega,\nu}\right](x-y){\rm d}x.
\endaligned
$$
A direct computation yields for $y^*\in\R^2,\theta^*\in\R^2$,
\begin{equation}\lab{diff1}
{\frac{\partial\tilde{f}(y^*,\theta^*)}{\partial y_j}}
=2\, \mathfrak{Re}\int_{\R^2}e^{i\theta^*}\bar{z}(x+y^*)\Theta_{j,\nu}(Q_{\omega,\nu}){\rm d}x=0
\end{equation}
and
\begin{equation}\lab{diff2}
{\frac{\partial \tilde{f}(y^*,\theta^*)}{\partial \theta}}
=2\, \mathfrak{Im}\int_{\R^2}e^{i\theta^*}\bar{z}(x+y^*)(-\Delta+\omega)Q_{\omega,\nu} {\rm d}x=0.
\end{equation}
Since $e^{i\theta^*}\bar{z}(\cdot+y^*)=\bar{w}+Q_{\omega,\nu}$, equations (\ref{diff1}) and (\ref{diff2}) imply that
 $(u,\Theta_{j,\nu}(Q_{\omega,\nu}))_{L^2}=0$ and $(v,Q_{\omega,\nu})_{H^1}=0$.
By the functional
 $$
 \mathcal{S_{\omega,\nu}}(z)=\mathcal{E}_\nu(z)+\omega \mathcal{F}(z),\quad z\in X(\R^2,\mathbb{C}),
 $$
defined in \eqref{1.3.1}, it is straightforward to see that
 $\mathcal{S}'_{\omega,\nu}(Q_{\omega,\nu})=0$.
From the translation invariance of $S_{\omega,\nu}$ and the expression of $w$, the Taylor expansion of $\mathcal {S} _{\omega , \nu}$ gives
$$
\aligned
\mathcal{E}_\nu(z)-& \mathcal{E}_\nu(Q_{\omega,\nu})=\mathcal{E}_\nu(z)-\mathcal{E}_\nu(e^{i\theta^*}Q_{\omega,\nu}(\cdot-y^*))\\
&=\mathcal{S_{\omega,\nu}}(e^{i\theta^*}w(\cdot-y^*)+e^{i\theta^*}Q_{\omega,\nu}(\cdot-y^*))-\mathcal{S_{\omega,\nu}}(Q_{\omega,\nu})\\
&=\mathcal{S_{\omega,\nu}}(w+Q_{\omega,\nu})-\mathcal{S_{\omega,\nu}}(Q_{\omega,\nu})=\frac{1}{2}\left (\mathcal{S_{\omega,\nu}''}(Q_{\omega,\nu}+\theta w)w,w\right )_{L^2}
\endaligned
$$
for $\theta\in(0,1)$. Thus, for any $\phi,\psi\in \tilde{X}$, it is deduced that
\begin{align}
&\left (S_{\omega,\nu}''(\phi)\psi,\psi\right )_{L^2}\nonumber \\
&=\frac{1}{2}\int_{\R^2}\left ( \nabla \psi\cdot\overline{\nabla \psi}+\nabla \psi\cdot\overline{\nabla \psi}\right ) {\rm d}x
+\omega\int_{\R^2}|\psi|^2{\rm d}x
-\int_{\R^2}\left (\ln\frac{1}{|x|}\ast|\phi|^2\right )|\psi|^2{\rm d}x\nonumber \\
&\quad-\nu\int_{\R^2}|\phi|^{p-1}|\psi|^2{\rm d}x-\nu(p-1)\int_{\R^2}|\phi|^{p-3}|\mathfrak{Re} (\psi\overline{\phi})|^2{\rm d}x\nonumber \\
&\quad-\frac{1}{2}\int_{\R^4}\ln\left(\frac{1}{|x-y|}\right)\left [\phi(x)\overline{\psi(x)}+\overline{\phi(x)}\psi(x)\right ]
\left [\phi(y)\overline{\psi(y)}+\overline{\phi(y)}\psi(y)\right ]{\rm d}x{\rm d}y\nonumber \\
%&\quad-\frac{1}{2}\int_{\R^2}\frac{[\phi(x)\overline{v(x)}+\overline{\phi(x)}v(x)][\phi(y)\overline{v(y)}+\overline{\phi(y)}v(y)]}{|x - y|}\\
%&=\int_{\R^2}|\nabla v|^2 + 2\int_{\R^2}|v|^2 - 2\int_{\R^2}\frac{|\phi(x)|^2|v(y)|^2}{|x - y|} - \int_{\R^2}\frac{[\phi(x)\overline{v(x)}
%+ \overline{\phi(x)}v(x)]\cdot[\phi(y)\overline{v(y)} + \overline{\phi(y)}v(y)]}{|x - y|} \\
&= \|\psi\|_{H^1(\R^2,\mathbb{C})}^2-\int_{\R^2}\left (\ln\frac{1}{|x|}\ast|\phi|^2\right )|\psi|^2{\rm d}x
-\nu\int_{\R^2}|\phi|^{p-1}|\psi|^2{\rm d}x-\nu(p-1)\int_{\R^2}|\phi|^{p-3}|\mathfrak{Re} (\psi\overline{\phi})|^2{\rm d}x \nonumber \\
&\quad- 2\int_{\R^2}\ln\left(\frac{1}{|x-y|}\right)[\mathfrak{Re} \phi(x)\mathfrak{Re} \psi(x)+
\mathfrak{Im} \phi(x)\mathfrak{Im} \psi(x)][\mathfrak{Re} \phi(y)\mathfrak{Re} \psi(y)+\mathfrak{Im} \phi(y)\mathfrak{Im} \psi(y)] {\rm d}x{\rm d}y\nonumber \\
&=\|\psi\|_{H^1(\R^2,\mathbb{C})}^2-\int_{\R^2} \left (\ln\frac{1}{|x|}\ast|\phi|^2\right )|\psi|^2{\rm d}x
- 2\int_{\R^2}\left (\ln\frac{1}{|x|}\ast\mathfrak{Re} \phi\mathfrak{Re} \psi \right )\mathfrak{Re} \phi\mathfrak{Re} \psi{\rm d}x\nonumber \\
&\quad -
2\int_{\R^2}\left (\ln\frac{1}{|x|}\ast\mathfrak{Im} \phi\mathfrak{Im} \psi\right )\mathfrak{Im} \phi\mathfrak{Im} \psi{\rm d}x
-4\int_{\R^2}\left (\ln\frac{1}{|x|}\ast\mathfrak{Re} \phi\mathfrak{Re} \psi \right )\mathfrak{Im} \phi\mathfrak{Im} \psi{\rm d}x\nonumber \\
&\quad-\nu\int_{\R^2}|\phi|^{p-1}|\psi|^2{\rm d}x-\nu(p-1)\int_{\R^2}|\phi|^{p-3}
\left(\mathfrak{Re} \phi\mathfrak{Re} \psi+\mathfrak{Im} \phi\mathfrak{Im} \psi\right)^2{\rm d}x. \lab{jisuan}
\end{align}
which gives that
\begin{align}
2  \big(\mathcal{E}_\nu(z)-& \mathcal{E}_\nu(Q_{\omega,\nu}) \big  )\nonumber \\
=&\|w\|_{H^1(\R^2,\C)}^2+\int_{\R^2}\big (\ln|x|\ast|Q_{\omega,\nu}+\theta w|^2\big )|w|^2{\rm d}x \nonumber \\
& +2\int_{\R^2}
\big (\ln|x|\ast(Q_{\omega,\nu}+\theta u)u\big )(Q_{\omega,\nu}+\theta u)u{\rm d}x\nonumber \\
&+2\theta^2\int_{\R^2}\big (\ln|x|\ast v^2\big )v^2{\rm d}x+4\theta\int_{\R^2}\big (\ln|x|\ast(Q_{\omega,\nu}+\theta u)u\big )v^2{\rm d}x \nonumber \\
& -\nu\int_{\R^2}|Q_{\omega,\nu}+\theta w|^{p-1}|w|^2{\rm d}x
-{(p-1)}\nu\int_{\R^2}|Q_{\omega,\nu}+\theta w|^{p-3}\big (uQ_{\omega,\nu}+\theta u^2+\theta v^2\big )^2{\rm d}x\nonumber \\
=&(L_{+,\nu}u,u)_{L^2} +(L_{-,\nu}v,v)_{L^2}+2\theta\int_{\R^2}(\ln|x|\ast Q_{\omega,\nu} u)|w|^2{\rm d}x+\theta^2\int_{\R^2}\big (\ln|x|\ast |w|^2\big )|w|^2{\rm d}x\nonumber \\
&+4\theta\int_{\R^2}(\ln|x|\ast Q_{\omega,\nu} u)u^2{\rm d}x+2\theta^2\int_{\R^2}\big (\ln|x|\ast u^2\big )u^2{\rm d}x+2\theta^2\int_{\R^2}\big (\ln|x|\ast v^2\big )v^2{\rm d}x\nonumber \\
&+4\theta\int_{\R^2}(\ln|x|\ast Q_{\omega,\nu} u)v^2{\rm d}x+4\theta^2\int_{\R^2}\big (\ln|x|\ast u^2\big )v^2{\rm d}x\nonumber \\
&+\nu\int_{\R^2}\Big  (Q_{\omega,\nu}^{p-1}v^2+pQ_{\omega,\nu}^{p-1}u^2
-|Q_{\omega,\nu}+\theta w|^{p-1}|w|^2  \nonumber   \\
& \qquad\qquad \qquad -(p-1)|Q_{\omega,\nu}+\theta w|^{p-3}(uQ_{\omega,\nu}   +\theta u^2+\theta v^2)^2\Big  ){\rm d}x  \nonumber \\
=:&(L_{+,\nu}u,u)_{L^2}+(L_{-,\nu}v,v)_{L^2}+I_1+I_2+I_3+I_4+I_5+I_6+I_7+I_8,
%&\geq D\|w\|_{X(\R^2,\mathbb{C})}^{2}-D_1\|w\|_{X(\R^2,\mathbb{C})}^{4}-D_2\|w\|_{X(\R^2,\mathbb{C})}^{3}.
\lab{diff2+}
\end{align}
where $I_1:=2\theta\int_{\R^2}(\ln|x|\ast Q_{\omega,\nu} u)|w|^2{\rm d}x$,
$$
\aligned
&I_2:=\theta^2\int_{\R^2}(\ln|x|\ast w^2)w^2{\rm d}x,\quad
&I_3:=4\theta\int_{\R^2}(\ln|x|\ast Q_{\omega,\nu} u)u^2{\rm d}x,\\
&I_4:=2\theta^2\int_{\R^2}(\ln|x|\ast u^2)u^2{\rm d}x,\quad
&I_5:=2\theta^2\int_{\R^2}(\ln|x|\ast v^2)v^2{\rm d}x,\\
 &I_6:=4\theta\int_{\R^2}(\ln|x|\ast Q_{\omega,\nu} u)v^2{\rm d}x, \quad
&I_7:=4\theta^2\int_{\R^2}(\ln|x|\ast u^2)v^2{\rm d}x,
\endaligned
$$
and
\begin{align*}
I_8:=& \nu\int_{\R^2}\Big (Q_{\omega,\nu}^{p-1}v^2+pQ_{\omega,\nu}^{p-1}u^2
-|Q_{\omega,\nu}+\theta w|^{p-1}|w|^2\\
&\qquad\qquad -(p-1)|Q_{\omega,\nu}+\theta w|^{p-3}
(uQ_{\omega,\nu}+\theta u^2+\theta v^2)^2\Big ){\rm d}x.
\end{align*}

To estimate $I_i$, $i=1,...,8$, first observe that
$$
I_1:=2\theta\int_{\R^2}(\ln|x|\ast Q_{\omega,\nu} u)|w|^2{\rm d}x=2\theta\int_{\R^2}(\ln|x|\ast|w|^2)Q_{\omega,\nu} u{\rm d}x.
$$
The Sobolev embedding theorem yields $|w|^2\in L^p(\R^2)$ for any $p\geq 2$ because $|w|\in X$.
Define $\mathcal{U}:=\ln|x|\ast|w|^2$ as the Newtonian potential of $|w|^2$. Then,
applying elliptic regularity theory \cite[Theorem 9.9]{Gilbarg98},
it is deduced that $\mathcal{U}\in W_{loc}^{2,p}(\R^2)$,  $\Delta \mathcal{U}=|w|^2$ holds a.e. in $\R^2$,
%%%
%\footnote{[ZWN241022] What do you mean by a.e.? almost everywhere?
%See the LaTeX file for my Chinese: ????? ³ýÈ¥Áã²âŒ¯ÒÔÍâ£¿žÅÂÊÎÊÌâ£¿
%}
%%%
and
$$
\|\mathcal{U}\|_{W_{loc}^{2,p}(\R^2)}\leq C\||w|^2\|_p=C\|w\|^2_{L^{2p}(\R,\mathbb{C})},
$$
where $C$ depends on $p$ only. Hence, by Sobolev embedding theorem again, it is found that
$\mathcal{U}\in C_{loc}^{1,\beta}(\R^2)$ for some $0<\beta<1$ and
\begin{equation}\lab{diff3}
\|\mathcal{U}\|_ {C_{loc}^{1,\beta}(\R^2)}\leq C\|w\|^2_{L^{2p}(\R,\mathbb{C})}.
\end{equation}
From \cite[Proposition 2.3(ii)]{Cingolani16}, one obtains that there exists  $R_1>0$  large such that
$$
|\mathcal{U}(x)|\leq \||w|\|_2^2|\ln |x||+\|u\|_2^3,
~~~\mbox{if } |x|\geq R_1.
$$
It is then obtained from H\"{o}lder's and {Sobolev} inequalities and (\ref{diff3}) that
\begin{equation}\lab{diff4}
\aligned
|I_1|&\leq 2\theta\left|\int_{|x|\leq R_1}(\ln|x|\ast|w|^2)Q_{\omega,\nu} u{\rm d}x\right|
+2\theta\left|\int_{|x|\geq R_1}(\ln|x|\ast|w|^2)Q_{\omega,\nu} u{\rm d}x\right|\\
&\leq C\|w\|_{H^1(\R^2,\mathbb{C})}^3+C\|w\|_{H^1(\R^2,\mathbb{C})}^4.
\endaligned
\end{equation}
Moreover, Sobolev embedding theorem yields $u^2\in L^p(\R^2)$ and $v^2\in L^p(\R^2)$ for any $p\geq 2$
due to $u\in X$ and $v\in X$. Applying a similar argument to the estimate of $I_1$,
%%%
%\footnote{[ZWN241022] revised. Please note the uses of "a similar ??? to" and "the same ??? as"
%}
%%%
it is deduced that
\begin{equation}\lab{diff+}
|I_3|+|I_6|\leq C\|w\|_{H^1(\R^2,\mathbb{C})}^3+C\|w\|_{H^1(\R^2,\mathbb{C})}^4.
\end{equation}
In addition,  the Hardy-Littleword-Sobolev inequality implies that
\begin{equation}\lab{diff5}
\aligned
\theta^{-2} & (I_2+  I_4+I_5+I_7)\\
=& \int_{\R^2}(\ln|x|\ast |w|^2)|w|^2{\rm d}x+2\int_{\R^2}(\ln|x|\ast u^2)u^2{\rm d}x\\
&\qquad\qquad +2\int_{\R^2}(\ln|x|\ast v^2)v^2{\rm d}x+4\int_{\R^2}(\ln|x|\ast u^2)v^2{\rm d}x\\
=& \int_{\R^4}\ln(1+|x-y|)h(x,y){\rm d}x{\rm d}y-\int_{\R^4}\ln \left (1+\frac{1}{|x-y|}\right )h(x,y){\rm d}x{\rm d}y\\
\geq&-\int_{\R^4}\frac{1}{|x-y|}|h(x,y)|{\rm d}x{\rm d}y
\geq-C\|w\|_{H^1(\R^2,\mathbb{C})}^4,\\
\endaligned
\end{equation}
where  $|w|^2=u^2+v^2$ and
$$
h(x,y):=|w(x)|^2|w(y)|^2+2u^2(x)u^2(y)+2v^2(x)v^2(y)+4u^2(x)v^2(y)\geq0.
$$
Since $(uQ_{\omega,\nu}+\theta u^2+\theta v^2)^2\leq |Q_{\omega,\nu}+\theta w|^2|w|^2$ holds for all $x\in\R^2$,
 the {mean value theorem }and H\"{o}lder's inequality give that
{\begin{equation}\lab{diff6}
\aligned
I_8\geq
 &-\nu(p-1)\int_{\R^2}\left ( |Q_{\omega,\nu}+\theta w|^{p-1}|w|^2-Q_{\omega,\nu}^{p-1}u^2\right ) {\rm d}x\\
&-\nu\left|\int_{\R^2}\left (|Q_{\omega,\nu}+\theta w|^{p-1}-Q_{\omega,\nu}^{p-1}\right )|w|^2{\rm d}x\right|\\
\geq& -C\nu(p-1) \int_{\R^2}\left (|Q_{\omega,\nu}|^{p-1}+|w|^{p-1}\right )|w|^2{\rm d}x-C\nu \int_{\R^2}\left (|Q_{\omega,\nu}|^{p-2}+|w|^{p-2}\right )|w|^3{\rm d}x\\
\geq&  -C\nu(p-1)\|w\|_{X(\R^2,\mathbb{C})}^{2}
-C\|w\|_{H^1(\R^2,\mathbb{C})}^3 {-C}\|w\|_{H^1(\R^2,\mathbb{C})}^{p+1}.
\endaligned
\end{equation}
Therefore, by  Proposition \ref{zui},  Lemma \ref{L2-1}, and (\ref{diff2+})-(\ref{diff6})\textcolor[rgb]{0.00,0.00,0.00}{, one has}
$$
\aligned
&\mathcal{E}_\nu(z)-\mathcal{E}_\nu(Q_{\omega,\nu})\\
&\geq(L_+u,u)+(L_-v,v)+(I_2+I_4+I_5+I_7)-(|I_1|+|I_3|+|I_6|)+I_8\\
&\geq (D-C\nu(p-1))\|w\|_{X(\R^2,\mathbb{C})}^{2}-D_1\|w\|_{H^1(\R^2,\mathbb{C})}^{4}-D_2\|w\|_{H^1(\R^2,\mathbb{C})}^{3}
-D_3\|w\|_{H^1(\R^2,\mathbb{C})}^{p+1},
\endaligned
$$
where $D, D_1,D_2,D_3>0$ is independent of $\nu$.}
Let $\tilde{w}(x):=e^{-i\tilde{\theta}}z(x+\tilde{y})-Q_{\omega,\nu}(x)$, where $\tilde{\theta}\in[0,2\pi)$ and $\tilde{y}\in\R^2$
are given in Lemma \ref{lem-inf}.
Then, Lemma \ref{lem-inf} and  $w (x) = e^{-i\theta^*} z (x + y^* ) - Q _{\omega, \nu} (x) $ defined after \eqref{infH} imply that  $\|\tilde{w}\|_{X(\R^2,\mathbb{C})}^2\leq \|w\|_{X(\R^2,\mathbb{C})}^2$.
Moreover,  (\ref{infH}) gives
that $\|w\|_{H^1(\R^2,\mathbb{C})}\leq \|\tilde{w}\|_{H^1(\R^2,\mathbb{C})}$. Thus, for $\nu \geq 0$ small enough and $\epsilon > 0$ small enough,
$$
\aligned
&\mathcal{E}_\nu(z)-\mathcal{E}_\nu(Q_{\omega,\nu})
\geq C \| \tilde w\|_{X(\R^2,\mathbb{C})}^{2}\, ,
\endaligned
$$
which completes the proof of this lemma.
\qed

%%%%%%%%%%%%%%%%%%%%%%%%%%%%%%%%%%

Finally, we are ready to prove Theorem \ref{Th3}.

\textbf{Proof of orbital stability.}
For any fixed $\nu\in[0,\nu^{**})$ with $\nu^{**}$ given in Lemma \ref{lem-xiajie}, we prove the orbital stability of $Q_{\omega, \nu} $ by finding a contradiction if it is false.
 Thus, assume that there exists a sequence of initial data $\psi_n(0)\in X(\R^2,\mathbb{C})$ and
$\delta>0$ such that
\begin{equation}\label{Stab-eq2}
\|\psi_n(0)-
Q_{\omega,\nu}\|_{X(\R^2,\mathbb{C})}\rightarrow 0,
\end{equation}
and
\begin{equation*}
\sup_{t>0}\inf\limits_{y\in\R^2,\theta\in\R}\|\psi_n(t)-e^{i\theta}
Q_{\omega,\nu}(\cdot-y)\|_{X(\R^2,\mathbb{C})}\geq 2\delta,
\end{equation*}
where $\psi_n(t)$ is a solution of \eqref{eqn:q1} with the initial datum $\psi_n(0)$. By the continuity of $\psi _n (t)$ in $t$, we can choose the first time $t_n$ such that
\begin{equation}\label{Stab-eq3}
 \inf\limits_{y\in\R^2,\theta\in\R}\|\psi_n(t_n)-e^{i\theta}
Q_{\omega,\nu}(\cdot-y)\|_{X(\R^2,\mathbb{C})}=\delta.
\end{equation}
By the conservation of energy and mass,
it is deduced from \eqref{Stab-eq2} that
\begin{equation}\label{Stab-eq3+}
 \mathcal{E}_\nu(\psi_n(t_n))=\mathcal{E}_\nu(\psi_n(0))\rightarrow \mathcal{E}_\nu(Q_{\omega,\nu}),
  \quad \|\psi_n(t_n)\|_2=\|\psi_n(0)\|_2\rightarrow \|Q_{\omega,\nu}\|_2.
\end{equation}
Then, it can be shown that $\{\psi_n(t_n)\}$ is bounded in $X(\R^2,\mathbb{C})$. Indeed, by \eqref{eqn:es3}, \re{Stab-eq3+} and the Gagliardo-Nirenberg inequality, there exists $C>0$ such that for large $n$
$$
\aligned
\frac12\|\psi_n(t_n)\|^2_{H^1(\R^2,\mathbb{C})} & +\frac{1}{4}B_1\left (|\psi_n(t_n)|^2,|\psi_n(t_n)|^2\right )\\
&\leq \frac{1}{4}B_2\left (|\psi_n(t_n)|^2,|\psi_n(t_n)|^2\right )+C+\frac{\nu}{p+1}\|\psi_n(t_n)\|_{p+1}^{p+1}\\
&\leq \frac{1}{4}\|\psi_n(t_n)\|_{\frac{8}{3}}^4+C+C {\|\nabla\psi_n(t_n)\|_2^{p-1}}\|\psi_n(t_n)\|_2^{2}\\
&\leq C\|\nabla\psi_n(t_n)\|_{2}\|\psi_n(t_n)\|_{2}^3+C{\|\nabla\psi_n(t_n)\|_2^{p-1}}+C\\
&\leq C\|\psi_n(t_n)\|_{H^1(\R^2,\mathbb{C})}\|\psi_n(t_n)\|_{2}^3+C\|\psi_n(t_n)\|_{H^1(\R^2,\mathbb{C})}^{p-1}+C,
\endaligned
$$
which implies by $p<3$ that $\{\psi_n(t_n)\}$ is bounded in $H^1(\R^2,\mathbb{C})$.
 {Similarly to that in \cite[Proposition 3.1]{Cingolani16},
one uses Lions' Lemma to obtain the existence of $\bar{x}_n\in\R^2$ such that $\bar{\psi}_n(t_n):=\psi_n(t_n)(\cdot-\bar{x}_n)$ converges
weakly in $H^1(\R^2)$ to some function $\bar{\psi}_0\in H^1(\R^2)\setminus\{0\}$ as $n\rightarrow\infty$.
It further follows from Lemma \ref{Lem:cw1} that
$\||\bar{\psi}_n(t_n)|\|_w \leq C$}. Hence, there exists $M>0$ only depending on $\|Q_{\omega,\nu}\|_X$ such that
\begin{equation}\label{Stabb}
{\|\bar{\psi}_n(t_n)\|_{X(\R^2,\mathbb{C})}\leq M.}
\end{equation}
If
$$
u_n:={\frac{\|Q_{\omega,\nu}\|_2}{\|\bar{\psi}_n(t_n)\|_2}\bar{\psi}_n(t_n)}\, ,
$$
then
\begin{equation*}
\|u_n\|_2=\|Q_{\omega,\nu}\|_2,
\quad
\| {\bar{\psi}_n(t_n)}-  u_n\|_{X(\R^2,\mathbb{C})} \rightarrow 0,
%%  \mbox{~~and}~~
\quad
\mathcal{E}_\nu(u_n)\rightarrow \mathcal{E}_\nu(Q_{\omega,\nu}).
\end{equation*}
If $\delta > 0 $ is chosen sufficiently small such that $\delta<\varepsilon$, by  (\ref{Stabb}) and
applying Lemma \ref{lem-xiajie}, it is obtained that
\begin{equation}\label{Stab-2+}
  0\leftarrow \mathcal{E}_\nu(u_n)- \mathcal{E}_\nu(Q_{\omega,\nu})\geq C \inf\limits_{y\in\R^2,\theta\in\R}
  \|e^{-i\theta}u_n(\cdot+y)-Q_{\omega,\nu}\|_{X(\R^2,\mathbb{C})}^2.
\end{equation}
Moreover, Lemma \ref{lem-inf} implies that  there exist  $\tilde{y}_n\in\R^2,\tilde{\theta}_n\in\R$ such that for large
$n$,
\begin{equation}\label{Stab-3}
C\geq \inf\limits_{y\in\R^2,\theta\in\R}
  \|e^{-i\theta}u_n(\cdot+y)-Q_{\omega,\nu}\|_{X(\R^2,\mathbb{C})}^2
  = \|e^{-i\tilde{\theta}_n}u_n(\cdot+\tilde{y}_n)-Q_{\omega,\nu}\|_{X(\R^2,\mathbb{C})}^2.
\end{equation}
Since ${\{\bar{\psi}_n(t_n)\}}$ is bounded in $X(\R^2,\mathbb{C})$,
then $\{u_n\}$ is also bounded in $X(\R^2,\mathbb{C})$, that is, (\ref{Stabb}) also holds for $\{u_n\}$.
From the argument for proof of Lemma \ref{lem-inf}, it is deduced
that $\{\tilde{y}_n\}$ is a bounded sequence in $\R^2$, i.e., $|\tilde{y}_n|\leq C$ for some $C>0$
depending only $\|Q_{\omega,\nu}\|_X$.
By \re{Stab-3} and the azimuthal invariance of $\|\cdot\|_{X(\R^2,\mathbb{C})}$,  it is found that
\begin{equation}\label{Stab-4}
\aligned
\|G_n\|_{X(\R^2,\mathbb{C})}^2:=&\|u_n(\cdot+\tilde{y}_n)-e^{i\tilde{\theta}_n}Q_{\omega,\nu}\|_{X(\R^2,\mathbb{C})}^2\\
=&\|e^{-i\tilde{\theta}_n}u_n(\cdot+\tilde{y}_n)-Q_{\omega,\nu}\|_{X(\R^2,\mathbb{C})}^2\leq C.
\endaligned
\end{equation}
Then, the translation invariance of $\|\cdot\|_{H^1(\R^2,\mathbb{C})}$ yields
\begin{equation}\label{Stab-5}
\|G_n\|_{H^1(\R^2,\mathbb{C})}^2=\|G_n(\cdot-\tilde{y}_n)\|_{H^1(\R^2,\mathbb{C})}^2
=\|u_n-e^{i\tilde{\theta}_n}Q_{\omega,\nu}(\cdot-\tilde{y}_n)\|_{H^1(\R^2,\mathbb{C})}^2.
\end{equation}
Moreover, because of $|\tilde{y}_n|\leq C$, one obtains that
$$
\aligned
\|G_n(\cdot-\tilde{y}_n)\|_{w}^2&=\int_{\R^2}\ln(1+|x|)|G_n(\cdot-\tilde{y}_n)|^2{\rm d}x =\int_{\R^2}\ln(1+|x+\tilde{y}_n|)|G_n|^2{\rm d}x\\
&\leq\int_{\R^2}\ln(1+|x|)|G_n|^2{\rm d}x+\int_{\R^2}\ln(1+C)|G_n|^2{\rm d}x =\|G_n\|_{w}^2+C\|G_n\|_2^2\, ,\\
\endaligned
$$
which implies by (\ref{Stab-4}) and (\ref{Stab-5}) that
\begin{equation}\label{Stab-6}
\|G_n(\cdot-\tilde{y}_n)\|_{X(\R^2,\mathbb{C})}^2\leq C\|G_n\|_{X(\R^2,\mathbb{C})}^2.
\end{equation}
Hence, by (\ref{Stab-2+})-(\ref{Stab-6}), it is deduced that as $n\rightarrow \infty$,
\begin{equation}\label{Stab-7}
\aligned
  0\leftarrow &\mathcal{E}_\nu(u_n)- \mathcal{E}_\nu(Q_{\omega,\nu})\geq C \inf\limits_{y\in\R^2,\theta\in\R}
  \|e^{-i\theta}u_n(\cdot+y)-Q_{\omega,\nu}\|_{X(\R^2,\mathbb{C})}^2\\
  &= \|e^{-i\tilde{\theta}_n}u_n(\cdot+\tilde{y}_n)-Q_{\omega,\nu}\|_{X(\R^2,\mathbb{C})}^2 =\|u_n(\cdot+\tilde{y}_n)-e^{i\tilde{\theta}_n}Q_{\omega,\nu}\|_{X(\R^2,\mathbb{C})}^2\\
  &\geq C\|u_n-e^{i\tilde{\theta}_n}Q_{\omega,\nu}(\cdot-\tilde{y}_n)\|_{X(\R^2,\mathbb{C})}^2\\
  &\geq C{\|\bar{\psi}_n(t_n)-e^{i\tilde{\theta}_n}Q_{\omega,\nu}(\cdot-\tilde{y}_n)\|_{X(\R^2,\mathbb{C})}^2
  -C\|u_n-\bar{\psi}_n(t_n)\|_{X(\R^2,\mathbb{C})}}\\
  &\geq C\inf\limits_{y\in\R^2,\theta\in\R}\|\psi_n(t_n)-e^{i\theta}
Q_{\omega,\nu}(\cdot-y)\|_{X(\R^2,\mathbb{C})}+o_n(1) = C \delta+o_n(1)\, ,
\endaligned
\end{equation}
where the fact that $\|\psi_n(t_n)-u_n\|_{X(\R^2,\mathbb{C})} \rightarrow0$ and equation \eqref{Stab-eq3} have been used, which is a contradiction because $\delta > 0$ is a fixed constant. The stability proof is completed.
\qed

\s{Appendix}

Note that $Q_\omega$ is the { unique positive} solution of $-\Delta u + \omega u = \frac{1}{2 \pi}\left [  \ln{\frac{1}{|x|}}*|u|^2\right ]u$.
Write an operator $F(\nu,u)$ in the sense of polar coordinates as
%$$F(\nu,u):\mathbb{R}^{+}\times Y:=H^{1}_{rad}(\mathbb{R}^2)\cap L^{2}_{rad}(\mathbb{R}^{2},\ln{(1+|x|)}{\rm d}x)
%\to L^{2}_{rad}(\mathbb{R}^{2} )
%$$
%by
%$$
%F(\nu,u) =-\Delta u+\omega u-\left [\ln{\frac{1}{|x|}}*|u|^2\right ]u-\nu |u|^{p-1}u.
%$$
%In terms of radial symmetry, $F$ can be rewritten as
\begin{align*}
F(\nu,u):\mathbb{R}^{+} \times \overline{Y}:= H^{1}(\mathbb{R}^{+},r{\rm d}r)\cap L^{2}_{rad}(\mathbb{R}^{+},{r\ln{(1+r)}}{\rm d}r)\to L^{2}_{rad}(\mathbb{R}^{+},r{\rm d}r ),
\end{align*}
where
\begin{align*}
F(\nu,u)=-u_{rr}-\frac{1}{r}u_{r}+\omega u-\int_{0}^{\infty } u^{2}(s)s\ln{\frac{1}{\max\{r,s\}}{\rm d}s} \cdot u(r)  - \nu |u|^{p-1}u.
\end{align*}
The Fr\'{e}chet derivative of $F$ with respect to $u\in\overline{Y}$,
denoted by $D_{u}F(\nu,u):\overline{Y}\to L_{rad}^2(\R^+,r{\rm d}r)$,
is
\begin{align*}
    D_uF(\nu,u)v&=-v_{rr} - \frac{1}{r} v_{r} + \omega v - \int_{0}^{\infty } u^{2}(s)s\ln{\frac{1}{\max\{r,s\}}{\rm d}s} \cdot v(r) \\
    & - 2\int_{0}^{\infty } u(s)  {v(s)}s\ln{\frac{1}{\max\{r,s\}}{\rm d}s} \cdot u(r) - \nu p|u|^{p-2}uv,\quad\forall v\in\overline{Y}.
\end{align*}
Obviously, $F(0,Q_{\omega})=0$ and $F$ is continuously differentiable in the neighborhood of $(0,Q_w)$.
In particular, the operator $D_{w}F(0,Q_{\omega})$ has only a trivial kernel in $\overline{Y}$, that is, $\mbox{ker}\, [D_{w}F(0,Q_{\omega})]=\left\{0\right\}$.
Let $\mathcal{L}:\overline{Y} \to L^{2}_{rad}((0,+\infty),r{\rm d}r)$ be defined by
\begin{align*}
\mathcal{L}(v):&= - v_{rr} - \frac{1}{r} v_{r} + \omega r + \int_{0}^{\infty} Q^2_{\omega}{(s)}s\ln{(\max\{r,s\})}{\rm d}s \cdot v(r) \\
 &+ 2\int_{0}^{\infty } Q_{\omega}(s)v(s)s\ln{({\max\{r,s\})}{\rm d}s} \cdot Q_{\omega}(r) = D_{w}F(0,Q_{\omega}).
\end{align*}
Therefore, $\mathcal{L}$ is injective. It remains to show that $\mathcal{L}$ is surjective if the operator $\mathcal{L}$ is one-to-one and onto, i.e.,
 for any \(f \in L^{2}(\R^+, r\,{\rm d}r)\setminus\{0\}\), the following equation is solvable
\begin{equation}\label{eqn:or-f}
\begin{aligned}
   \mathcal{L}(v)&= -v_{rr} - \frac{1}{r}v_{r} + \omega v + W v \\
   &+ 2\int_{0}^{\infty} Q_{\omega}(s) v(s) \cdot s \ln{(\max\{r,s\})} \, {\rm d}s \cdot Q_{\omega}(r)=f,
\end{aligned}
\end{equation}
where
\[
{W(r) }:= \int_{0}^{\infty} Q_{\omega}^{2}(s) \cdot s \, \ln{(\max\{r,s\})} \, {\rm d}s,
\]
subject to the boundary conditions
\[
v_{r}(0)=0, \qquad \lim_{r\to +\infty} v(r)=0.
\]

\noindent
\begin{lemma}\label{Lem:compact00}
{$W\in L^\infty_{loc}(\R)$}, and for any $\varepsilon > 0$, there exists $R_{\varepsilon}>0$ such that for $r>R_\epsilon$,
\begin{equation}\label{Wr-0}
\left |{W(r)} - \int_{0}^{+\infty} Q_w^2(s) \cdot s \ln{r}{\rm d}s   \right | < \epsilon.
\end{equation}
Moreover, {there exists $R>0$ such that for $r>R$,}
\begin{align}\label{Wr-01}
\frac{1}{2} \ln{(1+r)}\left \| Q_w \right \|^2_{L^2(\R^+,r{\rm d}r)}<W(r)
<{ 2\ln{(1+r)} \left \| Q_w \right \|^2_{L^2(\R^+,r{\rm d}r)}}.
\end{align}
\end{lemma}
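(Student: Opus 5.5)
The plan is to analyze $W(r)=\int_0^\infty Q_\omega^2(s)\,s\ln(\max\{r,s\})\,{\rm d}s$ by splitting the integral at $s=r$. We use two facts about $Q_\omega$ taken from earlier in the paper: it is bounded (it is continuous and radially decreasing), and it decays exponentially by \eqref{expo}. Together these make $\int_0^\infty Q_\omega^2(s)\,s\,|\ln s|\,{\rm d}s$ finite.

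\emph{Local boundedness.} For $0<r\le R_0$ we bound
\[
|W(r)|\le \int_0^\infty Q_\omega^2(s)\,s\,\bigl(|\ln r|+|\ln s|\bigr)\,{\rm d}s .
\]
This control blows up only logarithmically at $r=0$, so it is not enough on its own. We therefore treat the region near the origin more carefully. For $r<1$, $\ln\max\{r,s\}=\ln s$ when $s\ge r$, and $|\ln r|\le|\ln s|$ when $s<r$. Hence
\[
|W(r)|\le\int_0^\infty Q_\omega^2(s)\,s\,|\ln s|\,{\rm d}s<\infty ,
\]
and the right-hand side is finite by boundedness of $Q_\omega$ near $0$ (since $s|\ln s|$ is integrable there) and exponential decay at infinity. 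For $r\ge1$ we simply use $|\ln\max\{r,s\}|\le \ln r+|\ln s|$. Together these give $W\in L^\infty_{loc}$.

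\emph{Asymptotics \eqref{Wr-0}.} Write
\[
W(r)-\ln r\int_0^\infty Q_\omega^2(s)\,s\,{\rm d}s=\int_r^\infty Q_\omega^2(s)\,s\,\ln\frac{s}{r}\,{\rm d}s .
\]
For $r\ge1$ the integrand is nonnegative and bounded by $Q_\omega^2(s)\,s\ln s$. So the difference is at most the tail $\int_r^\infty Q_\omega^2(s)\,s\ln s\,{\rm d}s$, which tends to $0$ as $r\to\infty$ by \eqref{expo}. This yields $R_\varepsilon$.

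\emph{Two-sided bound \eqref{Wr-01}.} Set $m=\|Q_\omega\|^2_{L^2(\R^+,r{\rm d}r)}>0$. By the previous step, $W(r)=m\ln r+o(1)$. We also have $\ln r/\ln(1+r)\to1$ and $\ln(1+r)\to\infty$. Hence for $r$ large, $W(r)/(m\ln(1+r))\to1$, and this ratio eventually lies in $(1/2,2)$.

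The only mildly delicate point is the behavior near $r=0$ in the local boundedness. It is handled by the comparison $|\ln r|\le|\ln s|$ for $s<r<1$ together with the integrability of $s|\ln s|$ at the origin.
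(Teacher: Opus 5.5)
Your proof is correct and follows the same route as the paper: you split $W(r)$ at $s=r$, rewrite $W(r)-\|Q_\omega\|^2_{L^2(\R^+,r{\rm d}r)}\ln r$ as $\int_r^\infty Q_\omega^2(s)\,s\ln(s/r)\,{\rm d}s$, and control this tail by the exponential decay of $Q_\omega$. You also supply two details the paper leaves implicit: the uniform bound $|W(r)|\le\int_0^\infty Q_\omega^2(s)\,s\,|\ln s|\,{\rm d}s$ for $r<1$ (which is what the later use of $\|W\|_{L^\infty[0,R]}$ actually requires), and the derivation of \eqref{Wr-01} from \eqref{Wr-0} via $\ln r/\ln(1+r)\to 1$.
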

\Proof
Note that
\begin{equation}\label{Wr}
\aligned
 {W(r)}& = \int_0^{+\infty} Q_w^2(s)\cdot s \ln(\max\left \{r,s \right \}){\rm d}s \\
  & = \int_0^{r} Q_w^2(s) \cdot s \ln{r} {\rm d}s + \int_r^{+ \infty} Q_w^2(s) \cdot s \ln{s}{\rm d}s.
\endaligned
\end{equation}
{Since $Q_w$ decays exponentially at infinity and is radially symmetric, and decreases
monotonically along the radial direction,
it is straightforward to check that $W\in L^\infty_{loc}(\R)$.
Moreover, \eqref{Wr} can be rewritten as}
\begin{align}
 {W(r)}-\int_{0}^{+\infty} Q^2_{\omega}(s)\cdot s\ln{r}{\rm d}s
= -\int_{r}^{+\infty} Q^2_{\omega}(s)\cdot s\ln{r}{\rm d}s
+\int_{r}^{+\infty} Q^2_{\omega}(s)\cdot s\ln{s}{\rm d}s.
\end{align}
{Again, by the fact that $Q_w$ decays exponentially at infinity and is radially symmetric,
(\ref{Wr-0}) follows immediately.}
 The proof is completed.
\qed

\begin{lemma}\label{Lem:compact10}
For all $v, w \in \overline{Y}$,
\begin{equation}
\begin{aligned}
   \left | \int_{0}^{\infty} \int_{0}^{\infty} Q_w(s) \, v(s) \cdot s \, \ln(\max\{r,s\})Q_w(r)w(r)r{\rm d}r{\rm d}s  \right |
\le C \, \left \| v \right \| _{L^{2}(\R^+,r{\rm d}r)} \, \left \| w \right \| _{L^{2}(\R^+,r{\rm d}r)}\, .
\end{aligned}
\end{equation}
\end{lemma}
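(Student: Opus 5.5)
We need to bound the double integral $\int_0^\infty\int_0^\infty Q_\omega(s)v(s)\,s\,\ln(\max\{r,s\})\,Q_\omega(r)w(r)\,r\,{\rm d}r{\rm d}s$ by $C\|v\|_{L^2(\R^+,r{\rm d}r)}\|w\|_{L^2(\R^+,r{\rm d}r)}$. The natural approach is Cauchy--Schwarz in the product measure $s\,{\rm d}s\,r\,{\rm d}r$, putting all the weight from the logarithm onto the two copies of $Q_\omega$. Concretely, write the integrand as
$$
\big[v(s)w(r)\big]\cdot\big[Q_\omega(s)Q_\omega(r)\ln(\max\{r,s\})\big],
$$
so that the absolute value of the integral is at most
$$
\|v\|_{L^2(r{\rm d}r)}\|w\|_{L^2(r{\rm d}r)}\left(\int_0^\infty\int_0^\infty Q_\omega^2(s)Q_\omega^2(r)\,\ln^2(\max\{r,s\})\,s\,r\,{\rm d}s{\rm d}r\right)^{1/2}.
$$
It therefore suffices to show that the last double integral is a finite constant $C^2$ depending only on $Q_\omega$.

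To bound it, use the elementary estimate $|\ln(\max\{r,s\})|\le |\ln r|+|\ln s|$. This holds because $\max\{r,s\}$ equals $r$ or $s$, and its logarithm is bounded in absolute value by the larger of $|\ln r|$ and $|\ln s|$. Then $\ln^2(\max\{r,s\})\le 2\ln^2 r+2\ln^2 s$, and the double integral splits into products of one-dimensional integrals:
$$
\le 4\,\|Q_\omega\|^2_{L^2(r{\rm d}r)}\int_0^\infty Q_\omega^2(r)\ln^2 r\,r\,{\rm d}r .
$$

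The finiteness of $\int_0^\infty Q_\omega^2(r)\ln^2 r\,r\,{\rm d}r$ is the only point requiring care, and I expect it to be the main (though mild) obstacle. There are two regions to check.

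\textbf{Near infinity.} On $[1,\infty)$ I would invoke the exponential decay (\ref{expo}) with $\nu=0$, namely $Q_\omega(r)\le Ce^{-r/2}$ for $r\ge R$, together with boundedness of $Q_\omega$ on $[1,R]$. Then $\int_1^\infty e^{-r}r\ln^2 r\,{\rm d}r<\infty$.

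\textbf{Near the origin.} On $(0,1]$, $Q_\omega$ is bounded (it is smooth and attains its maximum at $0$). Since $\int_0^1 r\ln^2 r\,{\rm d}r<\infty$, this piece is finite as well.

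Combining the two regions, the constant $C$ depends only on $\|Q_\omega\|_{L^\infty}$, $\|Q_\omega\|_{L^2(r{\rm d}r)}$ and the decay constants in (\ref{expo}), which yields the lemma. The same argument applies verbatim with $Q_{\omega,\nu}$ in place of $Q_\omega$, uniformly for small $\nu$, since (\ref{expo}) is uniform in $\nu$.
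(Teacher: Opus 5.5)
Your proof is correct, but it differs from the paper's in two ways.

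The first difference is in how Cauchy--Schwarz is applied. The paper does not use Cauchy--Schwarz on the product space. It dominates the kernel by $\ln[(1+r)(1+s)]=\ln(1+r)+\ln(1+s)$ and splits the double integral into two separable pieces. It then bounds each piece as a product of one-dimensional integrals, e.g.\ $\bigl(\int_0^\infty Q_\omega^2(s)\, s\ln^2(1+s)\,{\rm d}s\bigr)^{1/2}\|Q_\omega\|\,\|v\|\,\|w\|$. Your Hilbert--Schmidt-type bound reaches the same kind of constant in one step, and you could equally factorize after your logarithm bound.

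The second, more substantive difference is the pointwise bound on the logarithm. $\ln[(1+r)(1+s)]$ only dominates $\ln\max\{r,s\}$ from above. The inequality $|\ln\max\{r,s\}|\le\ln[(1+r)(1+s)]$ implicit in the paper's first step fails as $r,s\to 0$: there $\ln\max\{r,s\}\to-\infty$ while the right side tends to $0$. Testing with $v=w=\mathbf{1}_{[0,\delta]}$ gives a left side of order $\delta^4|\ln\delta|$ against a right side of order $\delta^5$. The paper also drops the absolute values on $v,w$ at that step.

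Your bound $|\ln\max\{r,s\}|\le|\ln r|+|\ln s|$ holds everywhere. The cost is that you need boundedness of $Q_\omega$ near $0$ and $\int_0^1 r\ln^2 r\,{\rm d}r<\infty$, and you check both. So your version also closes the small gap at the origin in the paper's argument, while handling infinity the same way via the exponential decay (\ref{expo}).
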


\Proof
Define
$$ {\overline{W} :=} \int_{0}^{\infty} \int_{0}^{\infty} Q_w(s)v(s)s\ln(\max\{r,s\})Q_w(r) \, w(r) \, r \, {\rm d}r \, {\rm d}s\, .
$$
By H{\"o}lder's inequality, it is obtained that
\begin{equation}
\begin{aligned}
|\overline{W}|\le & \int_{0}^{\infty} \int_{0}^{\infty} Q_w(s)v(s)s\ln[(1+r)(1+s)] \cdot Q_w(r) w(r) \, r \, {\rm d}s \, {\rm d}r \\
 =& \left[ \int_{0}^{\infty}\int_{0}^{\infty} Q_w(r)w(r)rQ_w(s)v(s) s \ln(1+s) {\rm d}s{\rm d}r \right]\\
 &+\left[ \int_{0}^{\infty}\int_{0}^{\infty} Q_w(s)v(s)sQ_w(r)w(r)r \ln(1+r) {{\rm d}s}{\rm d}r \right] \\
\le& \left( \int_{0}^{\infty} |Q_w(s)|^2 s [\ln(1+s)]^2{\rm d}s \right)^{\frac12}\|Q_w\|_{L^{2}(\R^+,r{\rm d}r)}\|v\|_{L^{2}(\R^+,r{\rm d}r)}\|w\| _{L^{2}(\R^+,r{\rm d}r)}\\
\le& C\cdot\|v\| _{L^{2}(\R^+,r{\rm d}r)}\|w\| _{L^{2}(\R^+,r{\rm d}r)}.
\end{aligned}
\end{equation}
The proof is completed. \qed

\begin{proposition}\label{Lem:manifo3}
$\mathcal{L}:\overline{Y} \to L^{2}_{rad}((0,+\infty),r{\rm d}r)$ is surjective.
\end{proposition}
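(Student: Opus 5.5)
We will show that $\mathcal{L}$ has closed range with trivial orthogonal complement. Combined with the already established injectivity, this gives surjectivity. The natural tool is a Fredholm decomposition
$$\mathcal{L}=\mathcal{L}_0+\mathcal{K},$$
where
$$\mathcal{L}_0 v:=-v_{rr}-\tfrac1r v_r+\omega v+Wv,\qquad \mathcal{K}v:=2\Big(\int_0^\infty Q_\omega(s)v(s)\,s\ln(\max\{r,s\})\,{\rm d}s\Big)Q_\omega(r).$$

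First, we treat the local part $\mathcal{L}_0$. By Lemma \ref{Lem:compact00}, $W\in L^\infty_{loc}$ and $W(r)\ge \tfrac12\|Q_\omega\|^2\ln(1+r)$ for $r>R$, so $W$ is bounded below and tends to $+\infty$. Fix a constant $K>0$ with $\omega+W+K\ge 1$. The quadratic form
$$a(v,v)=\int_0^\infty\big(v_r^2+(\omega+K+W)v^2\big)r\,{\rm d}r$$
is then coercive on $\overline{Y}$; indeed it is equivalent to $\|v\|_{\overline Y}^2$, since $W\sim \ln(1+r)$ at infinity. Lax--Milgram gives, for every $f\in L^2(\R^+,r{\rm d}r)$, a unique weak solution $v\in\overline Y$ of $(\mathcal{L}_0+K)v=f$, with $\|v\|_{\overline Y}\le C\|f\|_2$. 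Interior ODE regularity then puts $v$ in the domain, with the boundary conditions $v_r(0)=0$ and $v\to0$ at infinity. Because the $\overline Y$-norm controls the weight $\ln(1+r)$, the embedding $\overline Y\hookrightarrow L^2(r{\rm d}r)$ is compact (this is the radial version of Lemma \ref{Lem:compact0}(i): local compactness plus uniform tail smallness $\int_{r>R}v^2r\,{\rm d}r\le \|v\|^2_{\overline Y}/\ln(1+R)$). Hence $T:=(\mathcal{L}_0+K)^{-1}$ is a compact, self-adjoint operator on $L^2(r{\rm d}r)$.

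Next, we handle the nonlocal part. Lemma \ref{Lem:compact10} shows that $\mathcal{K}$ is bounded on $L^2(r{\rm d}r)$. In fact $\mathcal K$ is compact there: splitting $\ln\max\{r,s\}\le \ln(1+r)+\ln(1+s)$ exhibits it as a Hilbert--Schmidt-type operator whose kernel
$$Q_\omega(r)Q_\omega(s)\ln(\max\{r,s\})\,rs$$
is square integrable with respect to $r\,{\rm d}r\,s\,{\rm d}s$, thanks to the exponential decay (\ref{expo}). Writing
$$\mathcal{L}v=f\iff \big(I+T(\mathcal{K}-K)\big)v=Tf,$$
we obtain an equation of the form (identity $+$ compact) on $L^2(r{\rm d}r)$. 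By the Fredholm alternative it is solvable for every right-hand side provided the homogeneous equation has only the trivial solution.

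The remaining point is to identify the homogeneous equation correctly. If $v\in L^2$ satisfies $v=-T(\mathcal K-K)v$, then $v\in$ range$(T)\subset\overline Y$ lies in the domain and $\mathcal{L}v=0$. Injectivity of $\mathcal{L}$ on $\overline Y$ then gives $v=0$; this injectivity is exactly the triviality of the radial kernel from \cite{Bonheure17}, as recorded before the statement. Consequently $\mathcal{L}$ is surjective. Continuity of $\mathcal{L}^{-1}$ follows from the bounded inverse of $I+T(\mathcal K-K)$ together with $\|Tg\|_{\overline Y}\le C\|g\|_2$.

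The main obstacle is the compactness of $T$, that is, checking rigorously that $W$ really dominates $\ln(1+r)$ from below. For large $r$ this is (\ref{Wr-01}). For small $r$, $W$ may be negative, since $\ln r<0$, but it is bounded; that is exactly why the shift by $K$ is introduced. We also need the weak solution to satisfy the stated boundary conditions, in particular $v\to0$ at infinity. This will be obtained from the radial lemma $|v(r)|\le Cr^{-1/2}\|v\|_{H^1}$.
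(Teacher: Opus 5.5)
Your argument is correct and is essentially the paper's: shift by a constant to obtain a coercive form on $\overline{Y}$, invert via Lax--Milgram, use compactness of the resolvent (coming from the compact embedding $\overline{Y}\hookrightarrow L^2(\mathbb{R}^+,r\,{\rm d}r)$) together with the Fredholm alternative, and conclude from the triviality of the radial kernel. The difference is only bookkeeping. The paper keeps the nonlocal term inside $\mathcal{L}_\lambda=\mathcal{L}+\lambda$, absorbing it via Lemma \ref{Lem:compact10}, and applies Riesz--Fredholm to $I-\lambda\mathcal{L}_\lambda^{-1}$ on $\overline{Y}$, whereas you invert only the local part and treat $\mathcal{K}-K$ as a bounded perturbation on $L^2$. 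You also make explicit the compactness that the paper uses tacitly. Note that only boundedness of $\mathcal{K}$ is needed, not compactness; this is just as well, since the bound $\ln\max\{r,s\}\le\ln(1+r)+\ln(1+s)$ does not by itself control $|\ln\max\{r,s\}|$ near the origin.
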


\Proof
Consider a modified equation
\begin{align}
    \mathcal{L}_\lambda(v) := \mathcal{L}(v) + \lambda v = f,\quad {\lambda\in\R} \label{*}
\end{align}
which will be solved in the sense of distribution.
By Lemmas \ref{Lem:compact00}-\ref{Lem:compact10} and choosing $\lambda > 0 $ sufficiently large, for any $ v \in \overline{Y}$, there exists $R>0$ such that
 \begin{equation}\label{eqn:below}
\aligned
    & \left \langle \mathcal{L}_{\la}(v),v\right \rangle_{L^2(\R^+,r{\rm d}r)} \\
    & \quad = \left \langle -v_{rr} - \frac{1}{r}v_{r} + \omega v , v\right \rangle_{L^2(\R^+ ,r{\rm d}r)} \\
    & \qquad + \left \langle Wv + 2\int_{0}^{\infty} Q_{\omega}(s) v(s) \cdot s \ln{(\max\{r,s\})}
    {\rm d}s \cdot Q_{\omega}(r) + \lambda v,v\right \rangle_{L^2(\R^+,r{\rm d}r)} \\
    & \quad = \int_{0}^{\infty} |v_{r}|^{2} \, r{\rm d}r + \int_{0}^{\infty}(\lambda+\omega) v^{2}r{\rm d}r \\
    & \qquad + \left \langle Wv + 2\int_{0}^{\infty} Q_{\omega}(s) v(s)s \ln{(\max\{r,s\})}
    {\rm d}s \cdot Q_{\omega}(r),v\right \rangle_{L^2(\R^+,r{\rm d}r)} \\
 & \quad \ge \frac{1}{2}\| v\| ^{2}_{\overline{Y}} +(\lambda-C)\|v\|^2_{L^2(\R^+,r{\rm d}r)}
    -\int_{0}^{R}Wv^2r{\rm d}r\\
    & \quad \ge \frac{1}{2}\|v\| ^{2}_{\overline{Y}} + (\lambda-C)  \| v \| ^{2}_{L^2(\R^+,r{\rm d}r)}
    -\|W\|_{L^\infty[0,R]}\int_{0}^{R}v^2r{\rm d}r  \ge \frac{1}{2} \|v\| ^{2}_{\overline{Y}},
\endaligned
\end{equation}
which yields that the bilinear form $\langle L_{\lambda} \, \cdot \, , \,\cdot \rangle$ is coercive.
Moreover, by Lemmas \ref{Lem:compact00}-\ref{Lem:compact10}, it is deduced that for any $v,w \in \overline{Y}$,
\begin{equation}\label{eqn:upper}
\aligned
& \left \langle \mathcal{L}_{\la}(v),w \right \rangle_{L^2(\R^+,r{\rm d}r)} \\
& \quad = \left \langle -v_{rr} - \frac{1}{r}v_{r} + \omega v + Wv, w \right \rangle_{L^2(\R^+,r{\rm d}r)} \\
& \qquad + \left \langle 2\int_{0}^{\infty} Q_{\omega}(s) v(s) \cdot s \ln{(\max\{r,s\})} \, {\rm d}s \cdot Q_{\omega}(r)
+ \lambda v , w \right \rangle_{L^2(\R^+,r{\rm d}r)} \\
& \quad \le \left \langle v_{r} , w_{r} \right \rangle_{L^2(\R^+,{\rm d}r)}
+\omega \left \langle v , w \right \rangle  _{L^2(\R^+,r{\rm d}r)}
+ 2\left \| Q_{\omega} \right \|_{2}^{2} \, \left | \left \langle v \ln (1+r) , w \right \rangle _{L^2(\R^+,r{\rm d}r)} \right | \\
& \qquad + \lambda \left \langle v , w \right \rangle _{L^2(\R^+,r{\rm d}r)} + \left \langle 2\int_{0}^{\infty} Q_{\omega}(s) v(s)
\cdot s \ln{(\max\{r,s\})} \, {\rm d}s Q_{\omega}(r) , w \right \rangle_{L^2(\R^+,r{\rm d}r)} \\
& \quad \le (  {2\left \|Q_{\omega}\right \|_{2}^{2}}+1)\|v\|_{\overline{Y}}\|w\|_{\overline{Y}}
+(\lambda + C)\|v\|_{\overline{Y}}\|w\|_{\overline{Y}},
\endaligned
\end{equation}
which shows that  {$\left \langle\mathcal{L}_{\lambda} \, \cdot \, , \,\cdot \, \right \rangle_{L^2(\R^+,r{\rm d}r)}$} is bounded.
Consequently, by the Lax-Milgram theorem, the modified problem \eqref{*} has a unique solution $v^{*} \in \overline{Y}$.
Therefore, $v^{*}=\mathcal{L}_{\lambda}^{-1}f$,
where linear operator $\mathcal{L}_{\lambda}^{-1}: L^{2}_{rad}(\mathbb{R}^{+},r{\rm d}r)\rightarrow \overline{Y}$ is bounded.
Moreover, for any $\tilde{f}\in L^{2}_{rad}(\mathbb{R}^{+},r{\rm d}r)$, there exists a unique $\tilde{v}\in \overline{Y}$ such that
$\tilde{v}=\mathcal{L}_{\lambda}^{-1}\tilde{f}$.
From (\ref{eqn:below}) and (\ref{eqn:upper}), define new norm in $\overline{Y}$ by $\|\cdot\|_{\lambda}
=\{\left \langle \mathcal{L}_\lambda\cdot,\cdot\right \rangle_{L^2(\R^+,r{\rm d}r)}\}^{1/2}$ endowed with the corresponding inner product
\begin{equation}\label{eqn:inn-}
\left \langle\cdot,\cdot\right \rangle_{\lambda}:=\left \langle \mathcal{L}_\lambda\cdot,\cdot\right \rangle_{L^2(\R^+,r{\rm d}r)}.
\end{equation}
It is obtained from (\ref{eqn:below}) and (\ref{eqn:upper}) that $\|\cdot\|_{\lambda}$ is
equivalent to $\|\cdot\|_{\overline{Y}}$. The original equation can be rewritten as
$$
\left \langle v,w\right \rangle_{\lambda}=\left \langle \mathcal{L}_\lambda^{-1}f,w\right \rangle_{\lambda}
+\lambda\left \langle \mathcal{L}_\lambda^{-1}v,w\right \rangle_{\lambda}, \quad{\forall w\in \overline{Y}},
$$
which is equivalent to the following equation in the weak sense of $\overline{Y}$
\begin{equation}\label{eqn:oper-eq}
(I-\lambda \mathcal{L}_\lambda^{-1})v=\mathcal{L}_\lambda^{-1}f.
\end{equation}
By the Riesz-Fredholm theorem, equation (\ref{eqn:oper-eq}) is solvable if and only if
\begin{equation}\label{eqn:zhenjiao}
\mathcal{L}_\lambda^{-1}f\perp \mbox{ker}\, (I-\lambda \mathcal{L}_\lambda^{-1}).
\end{equation}
Thus, if $v\in \mbox{ker}\, (I-\lambda \mathcal{L}_\lambda^{-1})$, then $v=\lambda \mathcal{L}_\lambda^{-1}v$ in the weak sense.
That is, for any $w\in \overline{Y}$
$$
\left \langle v,w\right \rangle_{\lambda}=\lambda\left \langle \mathcal{L}_\lambda^{-1}v,w\right \rangle_{\lambda},
\quad{\forall w\in \overline{Y}},
$$
which implies by the definition of $\mathcal{L}_\lambda^{-1}$ that
$\left \langle v,w\right \rangle_{\lambda}=\lambda\left \langle v,w\right \rangle_{L^2(\R^+,r{\rm d}r)}$ for any $w\in\overline{Y}$. Hence,
 (\ref{eqn:inn-}) yiel{\rm d}s $\mathcal{L}(v)=0$.
Since operator $\mathcal{L}=D_{w}F(0,Q_{\omega})$ has only a trivial kernel in $\overline{Y}$,  $v=0$ and
(\ref{eqn:zhenjiao}) always holds. This yields that equation (\ref{eqn:oper-eq}) is solvable.
Therefore, the original equation (\ref{eqn:or-f}) has a unique nontrivial
weak solution $v_{*} \in \overline{Y}$. This completes the proof.
\qed

Consequently, operator $\mathcal{L}$ is one-to-one and onto, which implies by the open mapping theorem that the inverse
$\mathcal{L}^{-1}:L_{rad}^2(\R^2)\to \tilde{X}_{rad}$ is continuous.

{Next, an integral estimate is given in order to prove a uniform convergence
with respect to $z\in \Omega_{Q_{\omega,\nu}}^{\e}$ with $\|z\|\leq M$ in Lemma \ref{lem-inf} when $|y|\rightarrow\infty$.
\begin{lemma}\label{Lem:int0}
For $y\in\R^2$, there exists $C>0$ independent of $y$ such that
\begin{equation}\lab{int1}
\int_{\mathbb{R}^2} \frac{e^{-\frac{1}{4}|x-y|}}{1+\ln(1+|x|)} dx
    \leq \frac{C}{1+\ln(1+|y|)}.
\end{equation}
\end{lemma}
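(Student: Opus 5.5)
We prove \eqref{int1} by splitting $\R^2$ according to the distance to $y$. For $|y|\le 2$ (or any fixed radius) the right-hand side is bounded below by a positive constant. The left-hand side is at most $\int_{\R^2}e^{-\frac14|x-y|}\,dx=2\pi\cdot 16$, since $1+\ln(1+|x|)\ge 1$. So the estimate holds trivially with a suitable $C$, and we may assume $|y|$ is large.

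For $|y|$ large, split the integral over the region $A:=\{|x-y|\le |y|/2\}$ and its complement $A^c$.

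On $A$ the triangle inequality gives $|x|\ge |y|/2$. Hence $1+\ln(1+|x|)\ge 1+\ln(1+|y|/2)$, and this is at least $c\,(1+\ln(1+|y|))$ for an absolute $c>0$, because $\ln(1+|y|/2)\ge \ln(1+|y|)-\ln 2$. Bounding the exponential integral over $A$ by its integral over all of $\R^2$, the contribution of $A$ is at most $C/(1+\ln(1+|y|))$.

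On $A^c$ we simply drop the denominator, using that it is at least $1$. The contribution is then bounded by
\[
\int_{|x-y|>|y|/2}e^{-\frac14|x-y|}\,dx=2\pi\int_{|y|/2}^\infty e^{-r/4}r\,dr\le C(1+|y|)e^{-|y|/8}.
\]
This decays exponentially, so it is certainly at most $C/(1+\ln(1+|y|))$, since $(1+\ln(1+t))(1+t)e^{-t/8}$ is bounded on $[0,\infty)$.

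Adding the two contributions gives \eqref{int1} with $C$ independent of $y$. No step is a real obstacle. The only point needing care is the comparison $\ln(1+|y|/2)\gtrsim 1+\ln(1+|y|)$ on $A$, and this is exactly why we isolate the case of bounded $|y|$ at the start.
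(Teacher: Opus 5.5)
Your proof is correct and follows essentially the paper's argument. The paper also splits at $|x-y|=|y|/2$ (in polar coordinates about $y$, using $|x|\ge ||y|-r|$), bounds the logarithm from below by $1+\ln(1+|y|/2)$ on the near region, and drops the denominator on the far region, where $\int_{|y|/2}^\infty e^{-r/4}r\,dr$ is exponentially small. One small remark: since $1+\ln(1+|y|/2)\ge(1-\ln 2)\bigl(1+\ln(1+|y|)\bigr)$ holds for all $y$, your comparison works everywhere, so the separate treatment of bounded $|y|$ is harmless but not actually needed.
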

\Proof We use polar coordinates to verify the above integral estimate. Without loss of generality, let $|y|\geq 1$.
Define\textcolor[rgb]{0.00,0.07,1.00}{
\[
x=y+r\omega,\quad r>0,\;\omega = (\cos \theta , \sin \theta ) \in\mathbb{S}^1,\quad dx=r\,dr d\theta.\]
%%
%\footnote{[ZWN] revised from ``, then''}
%%
Then $|x-y|=r$ and$|x|=|y+r\omega|\geq ||y|-r|$}. The left side of (\ref{int1}) can be estimated by
\begin{align*}
    \Lambda(y):=& \int_0^\infty\int_0^{2\pi} \frac{e^{-\frac14 r}}{1+\ln(1+|y+r\omega|)}\,r\,d\theta  dr
    \leq 2\pi  \int_0^\infty \frac{e^{-\frac14 r}}{1+\ln(1+| |y| - r |)}\,r\, dr \, \\
 =  & 2\pi  \left ( \int_0^{|y|/2} +  \int_{|y|/2} ^\infty \right ) \frac{e^{-\frac14 r}}{1+\ln(1+| |y| - r |)}\,r\, dr \\
 \leq &  C  \left ( \int_0^{|y|/2}  \frac{e^{-\frac14 r}}{1+\ln(1+|y|/2)}\,r\, dr  +
 \int_{|y|/2} ^{\infty}  {e^{-\frac14 r}}\,r\, dr  \right ) \\
  \leq &  C  \left (  \frac{1}{1+\ln(1+|y|/2)} + e^{- |y|/16} \right ) \leq \frac{C}{1+\ln(1+|y|)}    \, ,\end{align*}
\textcolor[rgb]{0.00,0.07,1.00}{which is  the right hand side
of (\ref{int1}).  The proof is complete.}\qed}
%%%
%\footnote{[ZWN] revised from ``right side''. We need to check this for the whole paper.
%}
%%%
%%
%\footnote{[ZWN] Better not finish with the end mark of a proof. Can we add sentences for the end?
%}
%%

%%%%%%%%%%%%%%%%%%%%%%%%%%%%%%%%%%%%%%%%%%%%%%%%%%%%%%%%%%%%%%%%%%%%%%%%%%

\subsection*{Declaration of competing interest.}
The authors declare that they have no known competing financial interests or personal relationships
that could have appeared to influence the work reported in this paper.

%%%%%%%%%%%%%%%%%%%%%%%%%%%%%%%%%%%%%%%%%%%%%%%%%%%%%%%%%%%%%%%%%%%%%%%%%%%%
%\footnote{[ZWN241022] Better to list all references in alphabetic order of authors' family names.
%  }
%%

%%
%\footnote{[ZWN241022] In my address my zip code is 610065}
%%


\begin{thebibliography}{10}


\bibitem{Azzollini21} A. Azzollini, The planar Schr\"{o}dinger-Poisson system with a positive potential.
                                               {\it Nonlinearity}, {\bf 34} (2021),  5799--5820.


\bibitem{Albuquerque21} F. Albuquerque, J. Carvalho, G. Figueiredo, E. Medeiros, On a planar non-autonomous
                                 Schr\"{o}dinger-Poisson system involving exponential critical growth.
                                               {\it Calc. Var. Partial Differential Equations}, {\bf 60} (2021),
                                                       Paper No. 40, 30 pp.



\bibitem{Bellazzini11} J. Bellazzini, G. Siciliano, Stable standing waves for a class of nonlinear Schr\"{o}dinger-Poisson
                                       equations. {\it Z. Angew. Math. Phys.}, {\bf 62} (2011), 267--280.


\bibitem{Bellazzini13} J. Bellazzini, L. Jeanjean, T. Luo, Existence and instability of standing waves with prescribed norm
for a class of Schr\"{o}dinger-Poisson equations. {\it Proc. Lond. Math. Soc.}, {\bf 107} (2013), 303--339.




\bibitem{Benci98} V. Benci, D. Fortunato, An eigenvalue problem for the Schr\"{o}dinger-Maxwell equations,
                                       {\it Topol. Methods Nonlinear Anal.}, {\bf 11} (1998), 283--293.



\bibitem{Berestycki83} H. Berestycki, P.-L. Lions, Nonlinear scalar field equations. I. existence of a ground state,
{\it Arch. Ration. Mech. Anal.} {\bf 82} (1983), 313-345.


\bibitem{Berestycki81} H. Berestycki,  T. Cazenave, Instabilit\'{e} des \'{e}tats stationnaires dans les \'{e}quations
de Schr\"{o}dinger et de Klein-Gordon non lin\'{e}aires, {\it C. R. Acad. Sci. Paris.}, {\bf 293} (1981), 489--492.



\bibitem{Berezin91} F. Berezin, M. Shubin, {\it The Schr\"{o}dinger Equation},
Translated from the 1983 Russian edition by Yu. Rajabov, D.A. Leites and N. A. Sakharova and revised by Shubin,
with contributions by G.L. Litvinov and Leites,
Mathematics and its Applications (Soviet Series), vol. 66, Kluwer Academic Publishers Group, Dordrecht, (1991), 555pp.


\bibitem{Bonheure17} D. Bonheure, S. Cingolani, and J. Van Schaftingen, The logarithmic Choquard equation:
Sharp asymptotics and nondegeneracy of the groundstate, {\it J. Funct. Anal.}, {\bf 272} (2017), 5255--5281.



\bibitem{Bonheure18} D. Bonheure,  J. Casteras,  E. dos Santos, R. Nascimento, Orbitally stable standing
     waves of a mixed dispersion nonlinear Schr\"{o}dinger equation,
     {\it SIAM J. Math. Anal.}, {\bf 50} (2018), 5027--5071.


\bibitem{Bucur22} C. Bucur,  D. Cassani, C. Tarsi,
                 Quasilinear logarithmic Choquard equations with exponential growth in $\Bbb{R}^N$,
     {\it J. Differential Equations}, {\bf 328} (2022), 261--294.


\bibitem{Cabrera25} O. Cabrera, S. Cingolani, T. Weth, A new framework for Ljusternik-Schnirelmann
                                 theory and its application to planar Choquard equations, arxiv preprint arxiv:2502.18421(2025).


\bibitem{Cassani21} D. Cassani, C. Tarsi, Schr\"odinger-Newton equations in dimension two via a Pohozaev-Trudinger
                                           log-weighted inequality,
                                     {\it Calc. Var. Partial Differential Equations}, {\bf 60} (2021), 31 pp.


\bibitem{Cassani23} D. Cassani, Z. Liu, G. Romani, Nonlocal planar Schr\"odinger-Poisson systems in the fractional Sobolev
                                limiting case, {\it J. Differential Equations}, {\bf 383} (2023), 214--269.


\bibitem{Cassani25} D. Cassani, Z. Liu, G. Romani, Nonlocal planar Schr\"odinger-Poisson systems in $\R^N$:
                               the fractional Sobolev limiting case,
                               {\it Rend. Istit. Mat. Univ. Trieste.}, {\bf 57} (2025), 33 pp.


\bibitem{Cazenave03} T. Cazenave, {\it Semilinear Schr\"odinger Equations},
                               Courant Lect. Notes Math. vol. 10, New York
                                University Courant Institute of Mathematical Sciences, New York, (2003).


\bibitem{Chen20} S. Chen, X. Tang, On the planar Schr\"odinger-Poisson system with the axially symmetric potential,
                                              J. Differential Equations, {\bf 268} (2020), 945--976.


\bibitem{Lions} T. Cazenave, P. Lions, Orbital stability of standing waves for some nonlinear Schr\"odinger equations,
                                {\it Comm. Math. Phys.}, {\bf85} (1982), 549--561.

%\bibitem{Caz} T. Cazenave, {\it An Introduction to Nonlinear Schr\"odinger Equations}, Textos deM\'etodos Matem\'aticos, Vol. 26 (Universidade Federal do Rio de Janeiro, 1996).






\bibitem{Cingolani16} S. Cingolani, T. Weth, On the planar Schr\"{o}dinger-Poisson systems,
                                    {\it Ann. Inst. H. Poincar\'{e} Anal. Non Lin\'{e}aire}, {\bf 33} (2016), 169--197.


\bibitem{Cingolani19} S. Cingolani, T. Weth,
 Trudinger-Moser-type inequality with logarithmic convolution potentials, {\it J. Lond. Math. Soc.},
{\bf 105}(2022), 1897--1935.

\bibitem{Cingolani-Jeanjean} S. Cingolani, L. Jeanjean, Stationary waves with prescribed $L^2$-norm
         for the planar Schr\"{o}dinger-Poisson system, {\it SIAM J. Math. Anal.},
             {\bf 51} (2019), 3533--3568.




\bibitem{Choquard08} P. Choquard, J. Stubbe, M. Vuffray, Stationary solutions of the Schr\"{o}dinger-Newton
                                                  model-an ODE approach, {\it Differential Integral Equations}, {\bf 21} (2008),
                                                                                            665--679.


\bibitem{Colin19} M. Colin,  T. Watanabe, A refined stability result for standing waves of
                                                                the Schr\"{o}dinger-Maxwell system. {\it Nonlinearity},
                                                                 {\bf 32} (2019), no. 10, 3695--3714.



\bibitem{Marco1} P. d'Avenia, M. Squassina, Soliton dynamics for the Schr\"odinger-Newton system,
                   {\it Math. Models Methods Appl. Sci.}, {\bf 24} (2014), 553--572.



%
%
%\bibitem{Marco1} P. d'Avenia, M. Squassina, Soliton dynamics for the Schr\"{o}dinger-Newton system
%                                              {\it  Math. Models Methods Appl. Sci.}, {\bf 24}(2014), 553--572.


 \bibitem{Dolbeault21} J. Dolbeault, R. Frank, L. Jeanjean, Logarithmic estimates for
                               mean-field models in dimension two and the Schr\"{o}dinger-Poisson system.
                                 {\it C. R., Math., Acad. Sci. Paris}, {\bf 359} (2021), 1279--1293.



\bibitem{Du17} M. Du, T. Weth, Ground states and high energy solutions of the planar
                 Schr\"{o}dinger-Poisson system, {\it Nonlinearity}, {\bf 30} (2017), 3492--3515.




\bibitem{Fall14} M. Fall, E. Valdinoci, uniqueness and nondegeneracy of positive solutions
of $(-\triangle)^su+u=u^{p}$ in $\R^N$ when $s$ is close to $1$,
{\it Commun. Math. Phys.}, {\bf 329} (2014), 383--404.


\bibitem{Gilbarg98} D. Gilbarg, N. Trudinger, {\it Elliptic Partial Differential Equations of Second Order}, 2nd Edition,
               Springer, Berlin, (1998).


\bibitem{Guo23} Y. Guo, W. Liang, Y. Li, Existence and uniqueness of constraint minimizers for the planar
                                             Schr\"{o}dinger-Poisson
                                 system with logarithmic potentials, {\it J. Differential Equations},
                                           {\bf 369} (2023), 299--352.


\bibitem{Grillakis87} M. Grillakis, J. Shatah, W. Strauss, Stability theory of solitary waves
                       in the presence of symmetry. I, {\it J. Funct. Anal.}, {\bf 74} (1987), 160--197.





\bibitem{Harrison03} R. Harrison, T. Moroz, K. Tod, A numerical study of the Schr\"{o}dinger-Newton
equation, {\it Nonlinearity}, {\bf 16} (2003), 101--122.


\bibitem{Kikuchi07} H. Kikuchi, Existence and stability of standing waves for Schr\"{o}dinger-Poisson-Slater equation,
                                  {\it Adv. Nonlinear Stud.},   {\bf 7} (2007), 403--437.


\bibitem{Kwong89} M. Kwong, Uniqueness of positive solutions of $\DD u-u+u^p=0$ in $\RN$,
                {\it Arch. Rational Mech. Anal.}, {\bf 105} (1989), 243--266.


\bibitem{Lenzmann09} E. Lenzmann, Uniqueness of ground states for pseudorelativistic Hartree equations,
                {\it Anal. PDE.}, {\bf 2} (2009), 1--27.




%\bibitem{Lin22} Z. Lin, C. Zeng, Instability, index theorem, and exponential trichotomy for linear Hamiltonian PDEs,
%                                               {\it Mem. Amer. Math. Soc.}, {\bf 275} (2022), no. 1347, v+136 pp.



\bibitem{Liu22} Z. Liu, V. R\v{a}dulescu, C. Tang, J. Zhang, Another look at planar Schr\"odinger-Newton system,
                                  {\it J. Differential Equations}, {\bf 328} (2022), 65--104.


\bibitem{Liu23} Z. Liu, V. R\u{a}dulescu, J. Zhang, A planar Schr\"odinger-Newton system with Trudinger-Moser
               critical growth,  {\em Calc. Var. Partial Differential Equations}, {\bf 62} (2023), 31 pp.



\bibitem{Luca18} B. Luca, J. Van Schaftingen, Groundstates of the Choquard equations with
                                                   a sign-changing self-interaction potential
                                               {\it Z. Angew. Math. Phys.}, {\bf 69} (2018), no.3, Paper No. 86, 16 pp.




\bibitem{Masaki10} S. Masaki, Local existence and WKB approximation of solutions to Schr\"{o}dinger-Poisson
                                              system in the two-dimensional whole space,
                                {\em Comm. Partial Differential Equations}, {\bf 35} (2010), 2253--2278.



\bibitem{Masaki11} S. Masaki, Energy solution to a Schr\"{o}dinger-Poisson system in the two-dimensional whole space,
                                {\em SIAM J. Math. Anal.}, {\bf 43} (2011), 2719--2731.






\bibitem{Lieb01} E. Lieb, M. Loss, {\em Analysis},  Graduate Studies in Mathematics, vol. 14, American Mathematical Society, Providence, RI, 2001.


%\bibitem{Kera} S. Keraani, Semiclassical limit for nonlinear Schr\"odinger equation with potential II, {\it Asymptot. Anal.}, {\bf47}(2006), 171--186.

\bibitem{lieb} E. Lieb, Existence and uniqueness of the minimizing solution of Choquard's nonlinear equation,
{\it Stud. Appl. Math.}, {\bf 57} (1977), 93--105.


\bibitem{Lieb77} E. Lieb, B. Simon, The Thomas-Fermi theory of atoms, molecules and solids,
                                {\it Adv. Math.}, {\bf 23} (1977) 22--116.



\bibitem{Lions87} P. Lions,  Solutions of Hartree-Fock equations for Coulomb systems,
                                 {\it Commun. Math. Phys.}, {\bf 109} (1987) 33--97.

\bibitem{Pucci24} P. Pucci, L. Wang, B. Zhang, Bifurcation and regularity of entire solutions for the planar nonlinear
                                      Schr\"{o}dinger-Poisson system,
                                 {\it Math. Ann.}, {\bf 389} (2024) 4265--4300.


\bibitem{Shen24} L. Shen, M. Squassina, Planar Schr\"odinger-Poisson system with steep potential well:
                                      critical exponential case,
                                      {\it NoDEA Nonlinear Differential Equations Appl.}, {\bf 32} (2025) Paper No. 15, 31 pp.



\bibitem{Sun24} J. Sun, S. Yao, J. Zhang, Planar Schr\"odinger-Poisson system with exponential critical growth:
                                    local well-posedness and standing waves with prescribed mass,
                                            Stud. Appl. Math. {\bf 153} (2024), Paper No. e12760, 33 pp.



\bibitem{Stubbe08} J. Stubbe, Bound states of two-dimensional Schr\"{o}dinger-Newton equations, (2008), available at 0807.4059v1.

%\bibitem{Tao} T. Tao,  Why are solitions stable?, {\it Bull. Am. Math. Soc.}, {\bf 46}(2009), 1-33.

\bibitem{Wein0} M. Weinstein, Nonlinear Schr\"{o}dinger equations and sharp interpolation estimates.
                            {\it Comm. Math. Phys.}, {\bf 87} (1982/83), 567--576.


\bibitem{Wein1} M. Weinstein, Modulational stability of ground states of nonlinear Schr\"odinger equations,
{\it SIAM J. Math. Anal.}, {\bf 16} (1985), 472--491.

\bibitem{Wein2} M. Weinstein, Lyapunov stability of ground states of nonlinear dispersive evolution equations,
{\it Comm. Pure Appl. Math.}, {\bf 39} (1986), 51--67.


\bibitem{Xiang16} C. Xiang, Uniqueness and nondegeneracy of ground states
for Choquard equations in three dimensions, {\it Calc. Var. Partial Differential Equations}, (2016), 55:134.


\end{thebibliography}
\end{document}